\newcommand{\ZB}{\mathbb{Z}}
\newcommand{\DB}{\mathbb{D}}
\newcommand{\RB}{\mathbb{R}} 
\newcommand{\TB}{\mathbb{T}} 
\newcommand{\CB}{\mathbb{C}}
\newcommand{\1}{\mathbbm{1}}
\newcommand{\DC}{\mathcal{D}}
\newcommand{\EC}{\mathcal{E}}
\newcommand{\HC}{\mathcal{H}}
\newcommand{\LC}{\mathcal{L}}
\newcommand{\PC}{\mathcal{P}}
\newcommand*\conj[1]{\overline{#1}}
\newcommand*\clos[1]{\overline{#1}}
\newcommand*\LHS{\mathrm{LHS}}
\newcommand*\RHS{\mathrm{RHS}}
\renewcommand{\Re}{\operatorname{Re}}
\providecommand*{\diff}%
{\@ifnextchar^{\DIfF}{\DIfF^{}}}
\def\DIfF^#1{%
	\mathop{\mathrm{\mathstrut d}}%
	\nolimits^{#1}\gobblespace}
\def\gobblespace{%
	\futurelet\diffarg\opspace}
\def\opspace{%
	\let\DiffSpace\!
	\ifx\diffarg(%
	\let\DiffSpace\relax
	\else
	\ifx\diffarg[%
	\let\DiffSpace\relax
	\else
	\ifx\diffarg\{%
	\let\DiffSpace\relax
	\fi\fi\fi\DiffSpace}
\theoremstyle{plain}
\newtheorem{thm}{Theorem}[section]
\newtheorem{lem}[thm]{Lemma}
\newtheorem{prop}[thm]{Proposition}
\newtheorem{cor}[thm]{Corollary}
\theoremstyle{definition}
\newtheorem{ex}[thm]{Example}
\theoremstyle{remark}
\newtheorem{rem}[thm]{Remark}
\numberwithin{equation}{section}
\title{On $m$-isometric semigroups, and $2$-isometric cogenerators}
\author{Eskil Rydhe\thanks{e.rydhe@leeds.ac.uk. School of Mathematics, University of Leeds, Leeds LS2 9JT, UK. This work was supported by the Knut and Alice Wallenberg foundation, scholarship KAW 2016.0442.}}
\begin{document}

\maketitle

\begin{abstract}
	It is known that a $C_0$-semigroup of Hilbert space operators is $m$-isometric if and only if its generator satisfies a certain condition, which we choose to call $m$-skew-symmetry. This paper contains two main results: We provide a Lumer--Phillips type characterization of generators of $m$-isometric semigroups. This is based on the simple observation that $m$-isometric semigroups are quasicontractive. We also characterize cogenerators of $2$-isometric semigroups. To this end, our main strategy is to construct a functional model for $2$-isometric semigroups with analytic cogenerators. The functional model yields numerous simple examples of non-unitary $2$-isometric semigroups, but also allows for the construction of a closed, densely defined, $2$-skew-symmetric operator which is not a semigroup generator.
\end{abstract}

\maketitle
\section{Introduction}

Throughout this paper, we let $\HC$ and $\EC$ denote complex Hilbert spaces. On their own, these are assumed to be completely general, while together, we will typically let $\EC$ denote a certain subspace of $\HC$. The word \textit{operator} always refers to a linear map $A:D(A)\to \HC$, with domain $D(A)\subseteq\HC$. While all operators encountered below are closed, we do not insist on this as part of the definition. The space of bounded operators is denoted $\LC=\LC(\HC)$. As for operators on $\EC$, we are more interested in $\LC_+=\LC_+(\EC)$, the convex cone of positive bounded operators.

A bounded operator $T$ is called \textit{$m$-isometric} if
\begin{align*}
\sum_{k=0}^m (-1)^{m-k}\binom{m}{k}\|T^kx\|^2=0,\quad x\in\HC.
\end{align*}
The study of $m$-isometries was initiated by Agler~\cite{Agler1990:ADisconjugacyTheoremForToeplitzOperators}, and significantly extended by Agler and Stankus~\cite{Agler-Stankus1995:m-IsometricTransformationsOfHilbertSpaceI,Agler-Stankus1995:m-IsometricTransformationsOfHilbertSpaceII,Agler-Stankus1995:m-IsometricTransformationsOfHilbertSpaceIII}. A related notion is that of $m$-symmetric operators, introduced by Helton~\cite{Helton1972:OperatorsWithARepresentationAsMultiplicationByXOnASobolevSpace}. By analogy, we say that an operator $A$ is \textit{$m$-skew-symmetric} if
\begin{align*}
\sum_{k=0}^m\binom{m}{k}\left\langle A^ky,A^{m-k}y\right\rangle = 0,\quad y\in D(A^m).
\end{align*}
This happens precisely when $iA$ is $m$-symmetric.

If we think of $n\in\ZB_{\ge 0}$ as a time variable, then the sequence $(T^n)_{n=0}^\infty$ constitutes a semigroup describing the evolution of some abstract linear system in discrete time. It can be shown that if $T$ is an $m$-isometry, then the semigroup $(T^n)_{n=0}^\infty$ consists entirely of $m$-isometries. A recent development is the study of $m$-isometric semigroups in continuous time, i.e. $C_0$-semigroups of $m$-isometries~\cite{Bermudez-Bonilla-Zaway2019:C0-SemigroupsOfM-IsometriesOnHilbertSpaces,GallardoGutierrez-Partington2018:C0-SemigroupsOf2-IsometriesAndDirichletSpaces}. It is known that a $C_0$-semigroup is $m$-isometric if and only if its generator is $m$-skew-symmetric. Moreover, this is equivalent to that, for each $x\in\HC$, the function $t\mapsto \|T_tx\|^2$ is a polynomial of degree no more than $(m-1)$ \cite[Theorem~2.1]{Bermudez-Bonilla-Zaway2019:C0-SemigroupsOfM-IsometriesOnHilbertSpaces}. We will prove a slightly more specific result about the function $t\mapsto \|T_tx\|^2$ (Theorem~\ref{thm:PolynomialGrowthContinuousTime}). This readily implies that $m$-isometric semigroups are quasicontractive (Corollary~\ref{cor:QuasiContractive}). Our first main result is then a simple consequence of the Lumer--Phillips theorem:
\begin{thm}\label{thm:MainGen}
	Let $m\in\ZB_{\ge 1}$. An operator $A$ is the generator of an $m$-isometric semigroup $(T_t)_{t\ge 0}$ if and only if the following conditions are satisfied:
	\begin{enumerate}[(i)]
		\item $A$ is closed and densely defined.
		\item There exists $w\ge 0$ such that
		\begin{align}\label{eq:QCCondGen}
		\Re\left\langle Ay,y\right\rangle \le w\|y\|^2,\quad y\in D(A).
		\end{align}
		\item There exists $\lambda>w$ such that $\lambda -A:D(A)\to\HC$ is surjective.
		\item $A$ is $m$-skew-symmetric.
	\end{enumerate}
	If the above conditions hold, and if $w\ge 0$, then \eqref{eq:QCCondGen} is satisfied if and only if $(T_t)_{t\ge 0}$ is quasicontractive with parameter $w$. Moreover, if $\lambda>0$, then $\lambda-A:D(A)\to\HC$ is invertible.
\end{thm}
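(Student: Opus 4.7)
The plan is to reduce the theorem to the Lumer--Phillips theorem applied to the shifted operator $A - w$. The bridge is Corollary~\ref{cor:QuasiContractive}, which supplies the quasicontractivity needed to pass between the $m$-isometric setting and the contractive setting where Lumer--Phillips lives.

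For the forward direction, I would start from a generator $A$ of an $m$-isometric $C_0$-semigroup $(T_t)_{t \ge 0}$, invoke Corollary~\ref{cor:QuasiContractive} to obtain some $w \ge 0$ with $\|T_t\| \le e^{wt}$, and observe that $A - w$ generates the contraction semigroup $(e^{-wt} T_t)_{t \ge 0}$. Applying the Lumer--Phillips theorem to $A - w$ then yields closedness and dense domain (condition (i)), dissipativity $\Re\langle (A-w)y, y\rangle \le 0$ (condition (ii)), and surjectivity of $(\lambda' + w) - A$ for every $\lambda' > 0$ (condition (iii) with any $\lambda > w$). Condition (iv) is the equivalence between $m$-isometric semigroups and $m$-skew-symmetric generators cited in the introduction. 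Conversely, given (i)--(iv), the same equivalence run backwards handles (iv) once we know $A$ generates, so the real work is the generation statement; but conditions (i), (ii), and (iii) make $A - w$ closed, densely defined, dissipative, and such that $(\lambda - w) - (A - w)$ is surjective for some $\lambda - w > 0$, so Lumer--Phillips produces a contraction semigroup generated by $A - w$, hence a quasicontractive $C_0$-semigroup generated by $A$.

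For the moreover clauses, the equivalence between (ii) and quasicontractivity of $(T_t)_{t\ge 0}$ with parameter $w$ is a direct reformulation of Lumer--Phillips for $A - w$ via the rescaling $T_t \mapsto e^{-wt} T_t$. The final assertion, that $\lambda - A$ is invertible for \emph{every} $\lambda > 0$ and not only for $\lambda > w$, is the one point that does not follow from Lumer--Phillips alone, and I expect this to be the main subtlety. The idea is to exploit Theorem~\ref{thm:PolynomialGrowthContinuousTime}: since $t \mapsto \|T_tx\|^2$ is a polynomial of degree at most $m - 1$, solving for its coefficients from $m$ sample values of $t$ shows that each coefficient is a bounded quadratic form in $x$, yielding a uniform polynomial bound $\|T_t\| \le C(1 + t)^{(m-1)/2}$. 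Consequently the Bochner integral $\int_0^\infty e^{-\lambda t} T_t x \, \diff t$ converges absolutely for every $\lambda > 0$, and a standard calculation using $\tfrac{d}{dt} T_t x = A T_t x$ with integration by parts identifies it with $(\lambda - A)^{-1}$.
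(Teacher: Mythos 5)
Your proposal is correct and follows essentially the same route as the paper: Corollary~\ref{cor:QuasiContractive} supplies quasicontractivity, the Lumer--Phillips theorem (in its shifted form for $A-w$) handles conditions (i)--(iii) and the generation statement, the $m$-skew-symmetry/$m$-isometry equivalence handles (iv), and the polynomial bound from Theorem~\ref{thm:PolynomialGrowthContinuousTime} combined with the resolvent formula \eqref{eq:LaplaceTransformOfSemigroup} gives invertibility of $\lambda-A$ for every $\lambda>0$. The only cosmetic difference is that you cite the known generator/semigroup equivalence for (iv), whereas the paper rederives it internally from the integral identity \eqref{eq:HigherOrderDefectAsIntegral}; both are adequate.
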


The final assertion of the above theorem follows from the polynomial growth of $\|T_tx\|^2$, together with the standard resolvent formula \eqref{eq:LaplaceTransformOfSemigroup}. Consequently,  the spectrum of $A$ lies in the left half of the complex plane. Therefore, $(T_t)_{t\ge 0}$ has a well-defined \textit{cogenerator} $T\in\LC$ given by
\begin{align*}
T=(A+I)(A-I)^{-1}.
\end{align*}
The cogenerator satisfies certain necessary conditions:
\begin{thm}\label{thm:MainCoGen}
	Let $m\in\ZB_{\ge 1}$. If $(T_t)_{t\ge 0}$ is an $m$-isometric semigroup, then $(T_t)_{t\ge 0}$ has a well-defined cogenerator $T\in\LC$ satisfying the following conditions:
	\begin{enumerate}[(i)]
		\item $T-I$ is injective, and has dense range.
		\item There exists $w\ge 0$ such that
		\begin{align}\label{eq:QCCondCoGen}
		\|Tx\|^2-\|x\|^2 \le w\|(T-I)x\|^2,\quad x\in \HC.
		\end{align}	
		\item $T$ is $m$-isometric.
	\end{enumerate}
	Moreover, \eqref{eq:QCCondCoGen} holds whenever $(T_t)_{t\ge 0}$ is quasicontractive with parameter $w$.
\end{thm}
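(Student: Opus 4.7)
The plan is to derive everything from Theorem~\ref{thm:MainGen}, which provides that the generator $A$ is closed, densely defined, $m$-skew-symmetric, and, via the final assertion with $\lambda=1$, that $A-I:D(A)\to\HC$ is a bijection. Consequently $T=(A+I)(A-I)^{-1}$ is a well-defined bounded operator on $\HC$. The algebraic identity $T-I=2(A-I)^{-1}$ exhibits $T-I$ as a bijection from $\HC$ onto $D(A)$, and since $A$ is densely defined this already gives (i).

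For (ii) and the ``Moreover'' assertion, I would substitute $y=(A-I)^{-1}x$, so that $x=(A-I)y$ and $Tx=(A+I)y$. A direct computation yields
\begin{align*}
\|Tx\|^2-\|x\|^2 = \|(A+I)y\|^2 - \|(A-I)y\|^2 = 4\Re\langle Ay,y\rangle,\qquad \|(T-I)x\|^2 = 4\|y\|^2,
\end{align*}
so that \eqref{eq:QCCondCoGen} with parameter $w$ is equivalent to \eqref{eq:QCCondGen} with the same $w$. The final sentence of Theorem~\ref{thm:MainGen} identifies the latter with quasicontractivity of $(T_t)_{t\ge 0}$ at rate $w$, yielding both the existence statement (ii) and the ``Moreover'' part.

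The main obstacle is (iii). My approach would be to pull the computation back to the generator: for $x\in\HC$, set $y=(A-I)^{-m}x\in D(A^m)$, so that for $0\le k\le m$,
\begin{align*}
T^kx = (A+I)^k(A-I)^{m-k}y = p_k(A)y,\qquad p_k(z):=(z+1)^k(z-1)^{m-k}.
\end{align*}
Expanding each $p_k(A)$ as a polynomial in $A$, the $m$-isometric expression becomes a double sum of inner products $\langle A^l y, A^{l'}y\rangle$ whose coefficients are encoded in the two-variable generating function $\sum_{k=0}^m (-1)^{m-k}\binom{m}{k}p_k(z)p_k(w)$. The key identity, obtained from the binomial theorem together with $(z+1)(w+1)-(z-1)(w-1)=2(z+w)$, is
\begin{align*}
\sum_{k=0}^m (-1)^{m-k}\binom{m}{k}p_k(z)p_k(w) = \bigl[(z+1)(w+1)-(z-1)(w-1)\bigr]^m = 2^m(z+w)^m.
\end{align*}
Matching coefficients of $z^l w^{l'}$ then reduces the $m$-isometric expression to
\begin{align*}
\sum_{k=0}^m (-1)^{m-k}\binom{m}{k}\|T^kx\|^2 = 2^m\sum_{l=0}^m\binom{m}{l}\langle A^l y, A^{m-l}y\rangle,
\end{align*}
which is $2^m$ times the $m$-skew-symmetric expression and hence vanishes by Theorem~\ref{thm:MainGen}(iv).
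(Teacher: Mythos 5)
Your proposal is correct, and its overall skeleton coincides with the paper's: everything is pulled back to the generator $A$ via Theorem~\ref{thm:MainGen}, with (i) read off from $T-I=2(A-I)^{-1}$ having range $D(A)$, and (ii) together with the ``Moreover'' clause obtained from the algebraic equivalence of \eqref{eq:QCCondGen} and \eqref{eq:QCCondCoGen} under the substitution $x=(A-I)y$ (your identity $\|(A+I)y\|^2-\|(A-I)y\|^2=4\Re\langle Ay,y\rangle$ is exactly the paper's $\|Tx\|^2-\|x\|^2=\Re\langle(T+I)x,(T-I)x\rangle$ in disguise). The one genuine divergence is in (iii). The paper invokes Lemma~\ref{lemma:M-IsometryToM-SkewSymmetry}, whose identity $\langle\beta_m(T)x,x\rangle=2^m\alpha_m^A((A-I)^{-m}x,(A-I)^{-m}x)$ is proved by induction on $m$ using the recursions \eqref{eq:BetaRecursion} and \eqref{eq:AlphaRecursion}. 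You instead re-derive the quadratic-form case of that identity in closed form, writing $T^kx=p_k(A)y$ with $p_k(z)=(z+1)^k(z-1)^{m-k}$ and collapsing the alternating sum via the binomial theorem to $\bigl[(z+1)(w+1)-(z-1)(w-1)\bigr]^m=2^m(z+w)^m$; since the coefficients $c_{k,l}$ of $p_k$ are real, matching coefficients of $z^lw^{l'}$ legitimately transfers to the sesquilinear expressions $\langle A^ly,A^{l'}y\rangle$, and the domain bookkeeping ($y=(A-I)^{-m}x\in D(A^m)$, commutation of polynomials in $A$ with $(A-I)^{-1}$ on $D(A^m)$) goes through, though you should say a word about it. Your generating-function computation is arguably slicker and makes the constant $2^m$ transparent in one stroke; the paper's inductive route has the advantage of yielding the full polarized identity \eqref{eq:BetaAndAlphaFormula} (used again in Section~\ref{sec:2-isometricCogenerators}) and of running entirely on the recursion formulas that the rest of the paper is built around.
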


It is classical, e.g. \cite[Chapter~III, Section~8]{Sz.-Nagy-Foias-Bercovici-Kerchy2010:HarmonicAnalysisOfOperatorsOnHilbertSpace}, that if $T\in\LC$ is a contraction, and $T-I$ is injective, then $T$ is the cogenerator of a contractive $C_0$-semigroup. Moreover, the semigroup is isometric whenever $T$ is. Note that if $T$ is isometric, then \eqref{eq:QCCondCoGen} is trivially satisfied for any $w\ge 0$. Hence, Theorem~\ref{thm:MainCoGen} has a natural converse in the case where $m=1$. We obtain a similar result for $m=2$:

\begin{thm}\label{thm:MainConverse}
	Let $T\in\LC$ be a $2$-isometry. Assume further that $1\notin\sigma_p(T)$, and that there exists $w\ge 0$ for which \eqref{eq:QCCondCoGen} is satisfied. Then $T$ is the cogenerator of a $C_0$-semigroup $(T_t)_{t\ge 0}$, which is $2$-isometric, and quasicontractive with parameter $w$.
\end{thm}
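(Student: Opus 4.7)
The plan is to define the candidate generator $A := (T+I)(T-I)^{-1}$ on $D(A) := \operatorname{ran}(T-I)$ and verify the four hypotheses of Theorem~\ref{thm:MainGen}. The map $T \mapsto A$ inverts the Cayley-type correspondence of that theorem, and the identities $A - I = 2(T-I)^{-1}$ and $A + I = 2T(T-I)^{-1}$ will recover $T = (A+I)(A-I)^{-1}$ as the cogenerator once the theorem is applicable.

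Three of the four conditions reduce to short calculations. Closedness of $A$ is immediate from $A - I = 2(T-I)^{-1}$, since the inverse of a bounded injection is closed. For $y = (T-I)x \in D(A)$, expansion yields $\Re\langle Ay, y\rangle = \|Tx\|^2 - \|x\|^2 \le w\|y\|^2$ by \eqref{eq:QCCondCoGen}. For 2-skew-symmetry, every $y \in D(A^2)$ has the form $y = (T-I)^2 z$ (forced by $(T+I)x = (T-I)x + 2x$ combined with $Ay \in D(A)$), and the identity $2\Re\langle A^2 y, y\rangle + 2\|Ay\|^2 = 0$ collapses, upon expansion, to a positive multiple of $\|T^2 z\|^2 - 2\|Tz\|^2 + \|z\|^2 = 0$, which holds because $T$ is a 2-isometry. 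For surjectivity of $\lambda - A$ with real $\lambda > 1$, the identity $(\lambda - A)(T-I) = (\lambda - 1)(T - \mu)$ with $\mu := (\lambda+1)/(\lambda-1) > 1$ gives $\operatorname{ran}(\lambda - A) = \operatorname{ran}(T - \mu)$; and because $\|T^n x\|^2 = n(\|Tx\|^2 - \|x\|^2) + \|x\|^2$ forces the spectral radius of $T$ to equal $1$, we have $\mu \in \rho(T)$, so this range is all of $\HC$.

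The real obstacle is density of $D(A) = \operatorname{ran}(T-I)$, equivalently $\ker(T^* - I) = \{0\}$. Suppose $T^* y = y$. Then $\langle Ty, y\rangle = \|y\|^2$ and $\|(T-I)y\|^2 = \|Ty\|^2 - \|y\|^2$, so \eqref{eq:QCCondCoGen} applied at $y$ reduces to $(1 - w)(\|Ty\|^2 - \|y\|^2) \le 0$. When $w < 1$, combining this with the expansiveness $\|Ty\| \ge \|y\|$ forces $\|Ty\| = \|y\|$, and equality in Cauchy--Schwarz together with $\langle Ty, y\rangle = \|y\|^2$ yields $Ty = y$, contradicting $1 \notin \sigma_p(T)$. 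For $w \ge 1$ this direct argument is inconclusive, so the plan is to invoke additional structure. Via the Agler--Stankus decomposition $T = U \oplus S$ with $U$ unitary and $S$ an analytic 2-isometry, the density question splits into the two summands. For unitary $U$, $\ker(U^* - I) = \ker(U - I) = \{0\}$ by hypothesis. For $S$, this is where the functional model for analytic 2-isometric cogenerators constructed earlier in the paper enters essentially: realising $S$ as a multiplication operator on a Dirichlet-type space shows $\sigma_p(S^*) \subseteq \DB$, so $1 \notin \sigma_p(S^*)$. With all four hypotheses of Theorem~\ref{thm:MainGen} verified, that theorem produces a 2-isometric, quasicontractive semigroup whose cogenerator is $T$, completing the proof.
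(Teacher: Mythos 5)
Your route---define $A=(T+I)(T-I)^{-1}$ on $D(A)=(T-I)\HC$ and verify the four hypotheses of Theorem~\ref{thm:MainGen}---is genuinely different from the paper's, which constructs the semigroup explicitly: Wold decomposition, transport to $\DC_\mu^2(\EC)$ via Theorem~\ref{thm:RichterOlofssonFunctionalModel}, and then Theorem~\ref{thm:MainMeasures} to exhibit $(M_{\phi_t})_{t\ge 0}$ as a quasicontractive $C_0$-semigroup with cogenerator $M_z$. Your verifications of closedness, $w$-dissipativity, $2$-skew-symmetry, and surjectivity of $\lambda-A$ for $\lambda>1$ are all correct. The gap is the remaining hypothesis: density of $D(A)=(T-I)\HC$. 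This is exactly the point the paper flags in the introduction---density of the range of $T-I$ is obtained only \emph{a posteriori}, from the fact that $T$ ends up being a cogenerator, and the author states that no direct proof is known. Your $w<1$ computation is fine, but for $w\ge 1$ you fall back on the claim that an analytic $2$-isometry $S$ satisfies $\sigma_p(S^*)\subseteq\DB$, and that claim is false. Take $\EC=\CB$ and $\mu=\delta_1$, the unit point mass at $\zeta=1$; by Theorem~\ref{thm:RichterOlofssonFunctionalModel}, $M_z$ is an analytic $2$-isometry on $\DC_{\delta_1}^2$. By the local Douglas formula (Theorem~\ref{thm:LocalDouglasFormula}), every $f\in\DC_{\delta_1}^2$ has a radial limit $f(1)$, and with $F(z)=\frac{f(z)-f(1)}{z-1}$ one has $f(1)=f(0)+F(0)$, hence $|f(1)|^2\le 2\bigl(\|f\|_{H^2}^2+\DC_{\delta_1}(f)\bigr)=2\|f\|_{\DC_{\delta_1}^2}^2$. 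So $f\mapsto f(1)$ is a bounded, nonzero linear functional on $\DC_{\delta_1}^2$, and it annihilates $(M_z-I)\DC_{\delta_1}^2$ because $\lim_{r\to 1^-}(r-1)g(r)=0$ whenever $g(1)$ exists. Thus $1\in\sigma_p(M_z^*)$ even though $M_z$ is analytic and $1\notin\sigma_p(M_z)$.

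Consequently, density of $(T-I)\HC$ cannot come from analyticity alone; it must use \eqref{eq:QCCondCoGen} in an essential way, and your direct argument extracts nothing from that hypothesis when $w\ge 1$. This is precisely the obstruction that forces the paper onto the constructive path: condition \eqref{eq:QCCondCoGen} is converted (via the implication $(ii)\Rightarrow(iii)$ of Theorem~\ref{thm:MainMeasures}) into the embedding for $\tilde\mu$, which in turn makes the multipliers $M_{\phi_t}$ bounded and strongly continuous, and the density of the range of $T-I$ drops out at the end because cogenerators automatically have densely defined generators. To salvage a Lumer--Phillips-style proof you would need to show directly that \eqref{eq:QCCondCoGen} forces $(M_z-I)\DC_\mu^2(\EC)$ to be dense---which, given the paper's explicit remark that no direct proof is known, essentially amounts to redoing the analysis of Theorem~\ref{thm:MainMeasures} anyway.
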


Note that Theorem~\ref{thm:MainConverse} does not require $T-I$ to have dense range. This is much like in the isometric case: If $T$ is an isometry, then $T$ and $T^*$ have the same invariant vectors, so $T-I$ has dense range if and only if it is injective. Our results imply that $T-I$ has dense range under the more general hypothesis of Theorem~\ref{thm:MainConverse}, but the author has not found a direct proof 	.

The proof of Theorem~\ref{thm:MainConverse} relies primarily on two non-trivial results about $2$-isometries, namely a Wold-type decomposition theorem, and a functional model for analytic $2$-isometries.

The first of these results (Theorem~\ref{thm:WoldDecomposition}) is due to Shimorin \cite{Shimorin2001:Wold-TypeDecompositionsAndWanderingSubspacesForOperatorsCloseToIsometries}, and states that any $2$-isometry $T$ can be written as a direct sum of a unitary operator, and an analytic $2$-isometry. By some classical properties of unitary cogenerators, this allows us to reduce the proof of Theorem \ref{thm:MainConverse} to the case where $T$ is analytic.

The second result (Theorem~\ref{thm:RichterOlofssonFunctionalModel}) states that if $T\in\LC=\LC(\HC)$ is an analytic $2$-isometry, and $\EC=\HC\ominus T\HC$, then $T$ is unitarily equivalent to the operator $M_z$, multiplication by the identity function $z\mapsto z$, acting on a harmonically weighted Dirichlet space $\DC_\mu ^2(\EC)$ of $\EC$-valued analytic functions on $\DB$. The parameter $\mu$ is a measure on the unit circle $\TB$, with values in $\LC_+=\LC_+(\EC)$. The correspondence between $T$ and $\mu$ is essentially bijective. Theorem~\ref{thm:RichterOlofssonFunctionalModel} was proved by Richter \cite{Richter1991:ARepresentationTheoremForCyclicAnalyticTwo-Isometries} in the case where $\dim \EC=1$, and extended to the general case by Olofsson \cite{Olofsson2004:AVonNeumann-WoldDecompositionOfTwo-Isometries}. The main idea behind Theorem~\ref{thm:MainConverse} is that an analytic $2$-isometry $T\in\LC$ is the cogenerator of a $C_0$-semigroup if and only if $M_z$ is the cogenerator of a $C_0$-semigroup on the corresponding space $\DC_{\mu}^2(\EC)$. Such a semigroup is necessarily given by the multiplication operators $(M_{\phi_t})_{t\ge 0}$, where $\phi_t:z\mapsto\exp\left(t(z+1)/(z-1)\right)$. Our strategy is then to determine all $\mu$ such that the operators $(M_{\phi_t})_{t\ge 0}$ form a $C_0$-semigroup on $\DC_{\mu}^2(\EC)$. 

After reduction to the analytic case, Theorem \ref{thm:MainConverse} follows from:
\begin{thm}\label{thm:MainMeasures}
	Let $\mu$ be an $\LC_+$-valued measure on $\TB$. Then the following are equivalent:
	\begin{enumerate}[(i)]
		\item For every $t\ge 0$, $M_{\phi_t}\in\LC(\DC_\mu^2(\EC))$, and the family $(M_{\phi_t})_{t\ge0}$ is a $C_0$-semigroup on $\DC_\mu^2(\EC)$.
		\item There exists $w_1\ge 0$ such that 
		\begin{align}\label{eq:Condw1}
		\frac{1}{2\pi}\int_{\TB}\left\langle \diff\mu\, f,f\right\rangle
		\le
		w_1
		\|(I-M_z)f\|_{\DC_\mu^2(\EC)}^2,\quad f\in\PC_a(\EC).
		\end{align}
		\item The set function $\tilde \mu: E\mapsto \frac{1}{2\pi}\int_{E}\frac{\diff\mu(\zeta)}{|1-\zeta|^2}$ is an $\LC_+$-valued measure, and there exists $w_2\ge 0$ such that 
		\begin{align}\label{eq:Condw2}
		\frac{1}{2\pi}\int_{\TB}\left\langle \diff\tilde{\mu}\, f,f\right\rangle
		\le
		w_2
		\|f\|_{\DC_\mu^2(\EC)}^2,\quad f\in\PC_a(\EC).
		\end{align}
	\end{enumerate}
	If either of the above conditions is satisfied, then the semigroup $(M_{\phi_t})_{t\ge0}$ is $2$-isometric, has cogenerator $M_z$, and is quasicontractive with some parameter $w\ge 0$. The optimal (smallest possible) values for $w$, $w_1$ and $w_2$ coincide.
\end{thm}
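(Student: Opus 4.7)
My plan rests on the defining identity of the Richter--Olofsson functional model (Theorem~\ref{thm:RichterOlofssonFunctionalModel}),
\begin{align*}
\|M_z f\|_{\DC_\mu^2(\EC)}^2 - \|f\|_{\DC_\mu^2(\EC)}^2 = \frac{1}{2\pi}\int_\TB \langle d\mu\, f, f\rangle, \qquad f \in \PC_a(\EC),
\end{align*}
which I denote $(\star)$, together with the Cayley correspondence identifying the candidate generator of $(M_{\phi_t})$ as $A := M_{(z+1)/(z-1)}$ with domain $D(A) := (M_z - I)\DC_\mu^2(\EC)$ (so that $(A+I)(A-I)^{-1} = M_z$, matching the formal expression $\phi_t = \exp(t(z+1)/(z-1))$). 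The strategy is to prove (i) $\Leftrightarrow$ (ii) by applying Theorems~\ref{thm:MainGen} and~\ref{thm:MainCoGen} to $A$ and $M_z$ respectively, and to establish (ii) $\Leftrightarrow$ (iii) by a direct measure-theoretic substitution.

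For (ii) $\Rightarrow$ (i) I would verify the four hypotheses of Theorem~\ref{thm:MainGen} for $A$. Closedness is routine. The Lumer--Phillips bound \eqref{eq:QCCondGen} follows by substituting $y = (M_z - I) f$ with $f \in \PC_a(\EC)$: a direct computation yields $\Re\langle A y, y\rangle = \|M_z f\|^2 - \|f\|^2$ and $\|y\|^2 = \|(1-z) f\|^2$, so $(\star)$ translates \eqref{eq:QCCondGen} into precisely (ii), and density of $\PC_a(\EC)$ in $\DC_\mu^2(\EC)$ extends the bound from $(M_z - I)\PC_a(\EC)$ to all of $D(A)$. The $2$-skew-symmetry of $A$ is a purely algebraic consequence of $M_z$ being $2$-isometric under the Cayley correspondence. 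For surjectivity of $\lambda - A$ with $\lambda > \max(w, 1)$, I would expand $1/((\lambda-1)z - (\lambda+1))$ as a Neumann series in $M_z/c_\lambda$ (with $c_\lambda := (\lambda+1)/(\lambda-1) > 1$) and use the growth estimate $\|M_z^n\|^2 = O(n)$ afforded by $(\star)$ to verify convergence on $\DC_\mu^2(\EC)$. Density of $D(A)$, which amounts to $\ker(M_z^* - I) = \{0\}$, is the subtle point: I would combine (ii) with Shimorin's analytic structure (Theorem~\ref{thm:WoldDecomposition}) and expansiveness of $M_z$, observing that a putative eigenvector would force $\|M_z g\| = \|g\|$ via (ii), contradicting analyticity. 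Theorem~\ref{thm:MainGen} then supplies a $2$-isometric $C_0$-semigroup generated by $A$, and the Laplace transform identifies it with $(M_{\phi_t})$. Conversely for (i) $\Rightarrow$ (ii): combining innerness of $\phi_t$, the semigroup law, and $(\star)$ forces $t \mapsto \|M_{\phi_t} f\|^2$ to be affine for $f \in \PC_a(\EC)$, so $(M_{\phi_t})$ is $2$-isometric, and Theorem~\ref{thm:MainCoGen} applied to its cogenerator $M_z$ yields (ii) through $(\star)$.

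For (ii) $\Leftrightarrow$ (iii): if $\tilde\mu$ is a measure, then $\mu(\{1\}) = 0$ and $d\mu = |1-\zeta|^2 d\tilde\mu$, so $\int \langle d\mu\, f, f\rangle = \int \langle d\tilde\mu\, (1-\zeta) f, (1-\zeta) f\rangle$; applying (iii) to $(1-z) f \in \PC_a(\EC)$ produces (ii) with $w_1 \le w_2$. Conversely, apply (ii) to $s_n f$ with $s_n(z) := 1 + z + \cdots + z^{n-1} = (1 - z^n)/(1 - z)$, so that $(1-z) s_n f = (1 - z^n) f$ and
\begin{align*}
\frac{1}{2\pi}\int_\TB |s_n(\zeta)|^2 \langle d\mu\, f, f\rangle \le w_1 \|(1 - z^n) f\|_{\DC_\mu^2(\EC)}^2.
\end{align*}
Expanding the right-hand side via $(\star)$, Ces\`{a}ro-averaging over $n$, and invoking the pointwise limit $N^{-1} \sum_{n=1}^N |s_n(\zeta)|^2 \to 2/|1-\zeta|^2$ on $\TB \setminus \{1\}$ together with a splitting of the integration near the singularity $\zeta = 1$, one extracts simultaneously the finiteness of $\tilde\mu$ and the bound (iii) with $w_2 \le w_1$.

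The main technical obstacle lies in this converse (ii) $\Rightarrow$ (iii): the singular kernel $1/|1-\zeta|^2$ must be controlled despite the linear-in-$n$ growth of both sides of the displayed bound, and the factors $|s_n(\zeta)|^2$ oscillate rather than grow monotonically. The remaining assertions---that $(M_{\phi_t})_{t\ge 0}$ is $2$-isometric with cogenerator $M_z$, is quasicontractive with some $w \ge 0$, and that the three optimal parameters $w$, $w_1$, $w_2$ coincide---fall out by tracking constants through each step: the Lumer--Phillips translation gives $w = w_1$, while the two directions of (ii) $\Leftrightarrow$ (iii) give $w_1 = w_2$.
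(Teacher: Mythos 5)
Two steps in your outline fail, and they are precisely the two places where the real content of the theorem lies. First, in $(ii)\Rightarrow(i)$ you route everything through Theorem~\ref{thm:MainGen}, which forces you to prove that $D(A)=(M_z-I)\DC_\mu^2(\EC)$ is dense, i.e.\ that $M_z-I$ has dense range. Your sketch (``a putative eigenvector would force $\|M_zg\|=\|g\|$ via (ii), contradicting analyticity'') does not work: analyticity is perfectly compatible with $\|M_zg\|=\|g\|$ (the shift on $H^2$ is analytic and isometric, and $\ker\beta_1(M_z)$ can be a large $M_z$-invariant subspace of $\DC_\mu^2(\EC)$ when $\mu$ is supported on a small arc), and you give no derivation of $\|M_zg\|=\|g\|$ from $M_z^{*}g=g$ and \eqref{eq:Condw1} in the first place. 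The paper explicitly records that no direct proof of this dense-range statement was found; its argument is organised precisely to avoid it: $(iii)\Rightarrow(i)$ is proved by constructing $(M_{\phi_t})_{t\ge0}$ by hand, via the multiplication formula $\|\phi_tf\|^2_{\DC_\mu^2(\EC)}=\|f\|^2_{\DC_\mu^2(\EC)}+\tfrac{t}{\pi}\int_\TB\bigl\langle \tfrac{\diff\mu(\zeta)}{|1-\zeta|^2}f(\zeta),f(\zeta)\bigr\rangle$ on polynomials (Lemma~\ref{lemma:MultiplicationFormula}), extension by density of $\PC_a(\EC)$, and a weak-compactness argument for strong continuity at $t=0$; density of $(M_z-I)\DC_\mu^2(\EC)$ then comes out as a consequence rather than being needed as a hypothesis.

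Second, the Ces\`aro argument for $(ii)\Rightarrow(iii)$ collapses. Since $M_z$ is a $2$-isometry, $\|z^nf\|^2_{\DC_\mu^2(\EC)}=\|f\|^2_{\DC_\mu^2(\EC)}+n\,c$ with $c=\frac{1}{2\pi}\int_\TB\langle\diff\mu\,f,f\rangle\ge 0$, whence $\|(1-z^n)f\|^2_{\DC_\mu^2(\EC)}=nc+O(\sqrt n)$. Therefore $\frac1N\sum_{n\le N}w_1\|(1-z^n)f\|^2_{\DC_\mu^2(\EC)}\sim w_1cN/2\to\infty$ whenever $c>0$, and the averaged inequality degenerates to ``left-hand side $\le\infty$''; no subsequence helps, since the growth is monotone in the mean. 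You correctly flag the linear growth as the obstacle, but the Ces\`aro averaging does not overcome it. The paper's substitute is to test \eqref{eq:Condw1} on $z\mapsto f(z)/(1-rz)$, so that the relevant factor $k_r=(1-z)/(1-rz)$ is uniformly bounded by $2$ and tends to $1$ off $\zeta=1$, keeping the right-hand side bounded; even then the right-hand side contains a term $\frac{1-r}{1+r}\int_\TB\frac{|f(\zeta)|^2}{|1-r\zeta|^2}\diff\mu_{x,x}(\zeta)$ comparable to the left-hand side, which must be absorbed (Lemma~\ref{lemma:WeakEmbedding}) before Fatou and dominated convergence can be applied. Your direction $(iii)\Rightarrow(ii)$, your treatment of $(i)\Rightarrow(ii)$, and your tracking of the constants $w$, $w_1$, $w_2$ are all consistent with the paper, but the two gaps above are fatal as the proposal stands.
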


\begin{rem}
	For an $\LC_+$-valued measure $\mu$, and $x\in\EC$, the set function $\mu_{x,x}:E\mapsto\left\langle \mu(E)x,x\right\rangle$ defines a finite positive Borel measure. If the inequality \eqref{eq:Condw2} holds for constant functions, then
	\begin{equation}\label{eq:Condw2'}
	\frac{1}{2\pi}\int_{\TB} \frac{\diff\mu_{x,x}(\zeta)}{|1-\zeta|^2}
	\le
	w_2
	\|x\|^2,\quad x\in\EC.
	\end{equation}
	On the other hand, if $\mu$ is any measure satisfying the above inequality, then $E\mapsto \int_{E}\frac{\diff\mu(\zeta)}{|1-\zeta|^2}$ defines an $\LC_+$-valued measure. Hence, when attempting to verify condition $(iii)$ of Theorem~\ref{thm:MainMeasures}, the inequality \eqref{eq:Condw2'} is a natural first step.
\end{rem}

Any bounded operator $A$ generates an invertible $C_0$-semigroup $(e^{tA})_{t\ge 0}$. As a special case of \cite[Theorem~2.1]{Bermudez-Bonilla-Zaway2019:C0-SemigroupsOfM-IsometriesOnHilbertSpaces} (or Theorem~\ref{thm:MainGen}), stated explicitly as \cite[Corollary~2.3]{Bermudez-Bonilla-Zaway2019:C0-SemigroupsOfM-IsometriesOnHilbertSpaces}, the semigroup is $m$-isometric if and only if $A$ is $m$-skew-symmetric. A limitation of this conclusion is that if $m$ is even, then by \cite[Proposition~1.23]{Agler-Stankus1995:m-IsometricTransformationsOfHilbertSpaceI}, $(e^{tA})_{t\ge 0}$ is in fact $(m-1)$-isometric. In particular, any $2$-isometric $C_0$-semigroup with bounded generator is unitary. On the other hand, Theorem~\ref{thm:MainMeasures} allows one to produce numerous examples of non-unitary $2$-isometric semigroups. In particular, such semigroups exist. One may also use Theorem~\ref{thm:MainMeasures} to construct a closed, densely defined, $2$-skew-symmetric operator $A$, with the property that $\lambda-A:D(A)\to\HC$ is surjective for any $\lambda>0$, but which is not the generator of a $C_0$-semigroup. This shows that the conditions \eqref{eq:QCCondGen} and \eqref{eq:QCCondCoGen} are not superfluous.

The paper is organized as follows: In Section~\ref{sec:NotationAndPrelimiaries} we introduce some notation and preliminary material. In Section~\ref{sec:m-IsometricSemigroups} we discuss $m$-isometric semigroups. In particular, we prove Theorem~\ref{thm:MainGen}, and Theorem~\ref{thm:MainCoGen}. In Section~\ref{sec:2-isometricCogenerators} we prove Theorem~\ref{thm:MainConverse}, and Theorem~\ref{thm:MainMeasures}. In Section~\ref{sec:Examples} we discuss some examples related to Theorem~\ref{thm:MainMeasures}. In Section~\ref{sec:m-ConcaveSemigroups} we briefly mention the wider context of $m$-concave semigroups.

\section{Notation and preliminaries}\label{sec:NotationAndPrelimiaries}

We use the notation $\DB:=\{z\in\CB;\, |z|<1\}$ for the open unit disc,  and $\TB:=\{z\in\CB;\,|z|=1\}$ for the unit circle of the complex plane $\CB$. By $\lambda$ we denote Lebesgue (arc length) measure on $\TB$, while $\diff A$ will signify integration with respect to area measure on $\CB$. We also use $\partial\Omega$ and $\clos{\Omega}$ to denote the boundary and closure of $\Omega\subset\CB$, respectively.

Given integers $m\ge k\ge 0$, we let $\binom{m}{k}=\frac{m!}{k!(m-k)!}$ denote the standard binomial coefficients. If the integers $m$ and $k$ do not satisfy the prescribed inequalities, then we set $\binom{m}{k}=0$. With this convention, the well-known relation
\begin{align}\label{eq:BinomialIdentity}
\binom{m}{k}=\binom{m-1}{k-1}+\binom{m-1}{k}
\end{align}
is valid whenever $m\ge 1$ and $k\in\ZB$. This will be used repeatedly.

Given an operator $A$, we let $\sigma(A)$, $\sigma_p(A)$, $\sigma_{ap}(A)$, and $W(A)$ respectively denote the spectrum, point spectrum, approximate point spectrum, and numerical range of $A$, i.e.
\begin{align*}
\sigma(A)&:=\{z\in\CB;\,z-A:D(A)\to\HC\textnormal{ is not bijective}\},\\
\sigma_p(A)&:=\{z\in\CB;\,z-A:D(A)\to\HC\textnormal{ is not injective}\},\\
\sigma_{ap}(A)&:=\{z\in\CB;\,z-A:D(A)\to\HC\textnormal{ is not bounded below}\},\\
W(A)&:=\left\{\left\langle Ay,y\right\rangle_\HC \in\CB ;\,y\in D(A),\|y\|=1\right\}.
\end{align*}
We also let $\rho(A):=\CB\setminus\sigma(A)$. If $z\in\rho(A)$, and $A$ is closed, then $(z-A)^{-1}:\HC\to D(A)$ is bounded by the closed graph theorem. On the other hand, if $(z-A)^{-1}$ is bounded, then $A$ is closed.

Given a family $\left\{S_i\right\}_{i\in I}$ of subsets of $\HC$, we let $\bigvee_{i\in I}S_i$ denote the smallest closed subspace of $\HC$ that contains each $S_i$.

\subsection{$m$-isometries, and $m$-skew-symmetries}
Let $m\in\ZB_{\ge 0}$, $T\in\LC$, and define 
\begin{align*}
\beta_{m}(T)=\sum_{j=0}^{m}(-1)^{m-j}\binom{m}{j}T^{*j}T^j.
\end{align*}
Apart from a normalizing factor, this agrees with the notation from \cite{Agler-Stankus1995:m-IsometricTransformationsOfHilbertSpaceI}. A straightforward consequence of \eqref{eq:BinomialIdentity} is that 
\begin{align}\label{eq:BetaRecursion}
\beta_{m+1}(T)=T^*\beta_m(T)T-\beta_m(T).
\end{align}
We also have the following formula, valid for $k\in\ZB_{\ge 0}$, $T\in\LC$:
\begin{align}\label{eq:SumOfDefectOperators}
T^{*k}T^k=\sum_{j=0}^{\infty}\binom{k}{j}\beta_{j}(T).
\end{align}
Note that with our notational convention for binomial coefficients, the above right-hand side has at most $k+1$ non-zero terms.

If $\beta_{m}(T)=0$, then we say that $T$ is an \textit{$m$-isometry}. By \eqref{eq:BetaRecursion}, any such operator is also an $(m+1)$-isometry. Moreover, \eqref{eq:SumOfDefectOperators} implies
\begin{align}\label{eq:PolynomialGrowthDiscreteTime}
\|T^kx\|^2=\sum_{j=0}^{m-1}\binom{k}{j}\left\langle \beta_{j}(T)x,x\right\rangle ,\quad x\in\HC.
\end{align}
Thus, $\|T^kx\|^2$ is polynomial in $k$ whenever $x\in\HC$, and $\|T^k\|^2\lesssim (1+k)^{m-1}$. Gelfand's formula for the spectral radius yields that $\sigma (T)\subseteq \clos{\DB}$. A more careful analysis reveals that $\sigma_{ap}(T)\subseteq \TB$, see \cite[Lemma~1.21]{Agler-Stankus1995:m-IsometricTransformationsOfHilbertSpaceI}. By the general fact that $\partial\sigma(T)\subseteq\sigma_{ap}(T)$, it follows that an $m$-isometry is either invertible, in which case $\sigma(T)\subseteq \TB$, or it is not invertible, in which case $\sigma(T)=\clos{\DB}$.

Given $m\in\ZB_{\ge 1}$, and an operator $A$, we define the sesquilinear form $\alpha_m^A$ by
\begin{align}\label{eq:DefinitionOfAlphaM}
\alpha_m^A(y_1,y_2)=\sum_{j=0}^m\binom{m}{j}\left\langle A^jy_1,A^{m-j}y_2\right\rangle,\quad y_1,y_2\in D(A^m).
\end{align}
It will be convenient to write $\alpha_{m}^A(y)$ instead of $\alpha_{m}^A(y,y)$. By \eqref{eq:BinomialIdentity}, 
\begin{align}\label{eq:AlphaRecursion}
\alpha_{m+1}^A(y_1,y_2)=\alpha_m^A(Ay_1,y_2)+\alpha_m^A(y_1,Ay_2).
\end{align}
We say that $A$ is \textit{$m$-skew-symmetric} if $\alpha_m^A$ vanishes identically. 

The relation between $m$-isometries and $m$-skew-symmetries has been frequently exploited in previous works \cite{Bermudez-Bonilla-Zaway2019:C0-SemigroupsOfM-IsometriesOnHilbertSpaces,Jacob-Partington-Pott-Wynn2015:Beta-AdmissibilityForGamma-HypercontractiveSemigroups}. For easy reference, we state and prove the following result, which is implicit in \cite[p.~425]{Rydhe2016:AnAgler-TypeModelTheoremForC0-SemigroupsOfHilbertSpaceContractions}:
\begin{lem}\label{lemma:M-IsometryToM-SkewSymmetry}
	Let $A$ be a closed operator, with $1\in\rho(A)$, and define $T\in\LC$ by
	\[
	T=(A+I)(A-I)^{-1}.
	\]
	Then $T-I$ is injective, $D(A)=(T-I)\HC$, and $A=(T+I)(T-I)^{-1}$. Moreover, for $m\in\ZB_{\ge 0}$, and $x_1,x_2\in\HC$,
	\begin{align}\label{eq:BetaAndAlphaFormula}
	\left\langle \beta_m(T) x_1,x_2\right\rangle =2^m\alpha_m^A\left((A-I)^{-m}x_1,(A-I)^{-m}x_2\right).
	\end{align}
\end{lem}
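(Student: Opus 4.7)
The first three assertions should follow from direct algebra with the resolvent $R := (A-I)^{-1}$. The plan is to observe that $A + I = (A-I) + 2I$ gives $T = I + 2R$, so $T - I = 2R$ is a bounded injection with range $R\HC = D(A)$ and inverse $\tfrac{1}{2}(A-I)$. A parallel computation, using $AR = I + R$ on $\HC$, shows $T + I = 2AR$, and multiplying these two factors yields $(T+I)(T-I)^{-1} = A$.

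For the main identity \eqref{eq:BetaAndAlphaFormula}, I would argue by induction on $m$, exploiting the parallel recursions \eqref{eq:BetaRecursion} for $\beta_m(T)$ and \eqref{eq:AlphaRecursion} for $\alpha_m^A$. The base case $m = 0$ reads $\langle x_1, x_2 \rangle = \langle x_1, x_2 \rangle$ and is trivial. For the inductive step, set $y_i := R^{m+1} x_i$. A brief preliminary verification is needed: iterating $AR = I + R$, one shows that $R$ maps $D(A^k)$ into $D(A^{k+1})$, so $y_i$ lies in $D(A^{m+1})$ and every expression $\alpha_k^A(\cdot,\cdot)$ appearing below is well-defined.

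The heart of the induction is the substitution $R^m T = R^m(I + 2R) = R^m + 2R^{m+1}$, giving $R^m T x_i = (A+I)y_i$ and $R^m x_i = (A-I)y_i$. Applying \eqref{eq:BetaRecursion} together with the inductive hypothesis then yields
\[
\langle \beta_{m+1}(T)x_1, x_2 \rangle = 2^m\left[\alpha_m^A\bigl((A+I)y_1,(A+I)y_2\bigr) - \alpha_m^A\bigl((A-I)y_1,(A-I)y_2\bigr)\right].
\]
Expanding both terms by sesquilinearity, the $\alpha_m^A(Ay_1, Ay_2)$ and $\alpha_m^A(y_1, y_2)$ contributions cancel between the two differences, leaving $2\bigl[\alpha_m^A(Ay_1, y_2) + \alpha_m^A(y_1, Ay_2)\bigr]$, which is exactly $2\alpha_{m+1}^A(y_1, y_2)$ by \eqref{eq:AlphaRecursion}. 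This gives the $(m+1)$-level identity. I do not anticipate a real obstacle here: the only non-algebraic point is the domain check ensuring $R^{m+1}\HC \subseteq D(A^{m+1})$, and the rest is matched expansion, driven by the common combinatorial identity \eqref{eq:BinomialIdentity} underlying both recursions.
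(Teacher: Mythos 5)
Your proposal is correct and follows essentially the same route as the paper: the algebraic identities $T-I=2(A-I)^{-1}$ and $T+I=2A(A-I)^{-1}$ for the first part, and an induction on $m$ driven by the parallel recursions \eqref{eq:BetaRecursion} and \eqref{eq:AlphaRecursion} for \eqref{eq:BetaAndAlphaFormula}. The only (cosmetic) difference is that the paper organizes the cancellation on the $\beta$ side via the self-adjointness of $\beta_m(T)$ and a symmetrization in $x_1,x_2$, whereas you apply the inductive hypothesis first and cancel on the $\alpha$ side by sesquilinearity; your domain check $R^{m+1}\HC\subseteq D(A^{m+1})$ is the right point to verify and goes through as you indicate.
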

\begin{proof}
	Since $A$ is closed, $(A-I)^{-1}:\HC\to D(A)$ is bounded. It is easy to see that $T=I+2(A-I)^{-1}$. Hence, the bounded operator $T-I$ is injective, with left-inverse $\frac{1}{2}(A-I)$. Moreover, $D(A)=(T-I)\HC$. The fact that $A=(T+I)(T-I)^{-1}$ is a simple algebraic verification.
	
	It is clear that \eqref{eq:BetaAndAlphaFormula} holds for $m=0$. For a general $m$, we use \eqref{eq:BetaRecursion} to see that
	\begin{multline*}
	\left\langle \beta_m(T)(T+I)x_1,(T-I)x_2\right\rangle 
	\\
	=
	\left\langle \beta_{m+1}(T)x_1,x_2\right\rangle
	-
	\left\langle \beta_{m}(T)Tx_1,x_2\right\rangle
	+
	\left\langle \beta_{m}(T)x_1,Tx_2\right\rangle
	\end{multline*}
	Since $\beta_m(T)^*=\beta_m(T)$, the above identity, with $x_1$ and $x_2$ interchanged, implies
	\begin{multline*}
	\left\langle \beta_m(T)(T-I)x_1,(T+I)x_2\right\rangle 
	\\
	=
	\left\langle \beta_{m+1}(T)x_1,x_2\right\rangle
	+
	\left\langle \beta_{m}(T)Tx_1,x_2\right\rangle
	-
	\left\langle \beta_{m}(T)x_1,Tx_2\right\rangle
	\end{multline*}
	Adding these two identities,
	\begin{multline*}
	2\left\langle \beta_{m+1}(T)x_1,x_2\right\rangle
	\\
	=
	\left\langle \beta_m(T)(T+I)x_1,(T-I)x_2\right\rangle
	+
	\left\langle \beta_m(T)(T-I)x_1,(T+I)x_2\right\rangle.
	\end{multline*}
	Since $T+I=2A(A-I)^{-1}$, and $T-I=2(A-I)^{-1}$, we therefore have
	\begin{multline*}
	\left\langle \beta_{m+1}(T)x_1,x_2\right\rangle
	=
	2\left\langle \beta_m(T)A(A-I)^{-1}x_1,(A-I)^{-1}x_2\right\rangle
	\\
	+
	2\left\langle \beta_m(T)(A-I)^{-1}x_1,A(A-I)^{-1}x_2\right\rangle.
	\end{multline*}
	Assuming that \eqref{eq:BetaAndAlphaFormula} holds, \eqref{eq:AlphaRecursion} implies 
	\[
	\left\langle \beta_{m+1}(T) x_1,x_2\right\rangle =2^{m+1}\alpha_{m+1}^A\left((A-I)^{-m-1}x_1,(A-I)^{-m-1}x_2\right).
	\]
	Hence, we obtain \eqref{eq:BetaAndAlphaFormula} by induction over $m$.
\end{proof}

\subsection{$C_0$-semigroups}\label{subsec:C_0-Semigroups}

By a \textit{semigroup} we mean a one-parameter family $(T_t)_{t\ge 0}\subset \LC$, such that $T_0=I$, and $T_{s+t}=T_sT_t$ for $s,t\ge 0$. For a detailed treatment of the facts outlined below, we refer to \cite[Chapter II]{Engel-Nagel2000:One-ParameterSemigroupsForLinearEvolutionEquations}.

A semigroup is called a \textit{$C_0$-semigroup}, or \textit{strongly continuous}, if for every $x\in\HC$ the \textit{orbit map} $\xi_x:[0,\infty)\ni t\mapsto T_tx\in\HC$ is continuous. Given a $C_0$-semigroup $(T_t)_{t\ge 0}$, the uniform boundedness principle implies that $(T_t)_{0\le t\le 1}$ is a bounded family in $\LC$. The semigroup property then implies that $(T_t)_{t\ge 0}$ is exponentially bounded, i.e. there exists $M\ge 1$ and $w\in\RB$ such that 
\begin{align}\label{eq:ExponentialBoundedness}
\|T_t\|_\LC\le M e^{wt},\quad t\ge 0.
\end{align}
If \eqref{eq:ExponentialBoundedness} holds with $M=1$, then we say that $(T_t)_{t\ge 0}$ is \textit{quasicontractive with parameter $w$}. A quasicontractive semigroup with parameter $0$ is simply called \textit{contractive}.

The \textit{(infinitesimal) generator} of $(T_t)_{t\ge 0}$ is the operator $A$ defined by
\[
Ay=\lim_{t\to 0^+}\frac{T_ty-y}{t}.
\]
Its domain $D(A)$ is the subspace of $y\in\HC$ such that the above limit exists. The generator of a $C_0$-semigroup is closed, densely defined, and uniquely determines the semigroup. If $y\in D(A)$ and $t\ge 0$, then $T_ty\in D(A)$, and $AT_ty=T_tAy$.

Let $x\in\HC$, and $\lambda >w$, where $w$ is as in \eqref{eq:ExponentialBoundedness}. Since $t\mapsto T_tx$ is continuous, the integral $\int_0^\infty T_txe^{-\lambda t}\diff t$ is well-defined as a generalized Riemann integral. It is easy to show that in this sense,
\begin{align}\label{eq:LaplaceTransformOfSemigroup}
(\lambda - A)^{-1}=\int_0^\infty T_te^{-\lambda t}\diff t.
\end{align}

Not every closed, densely defined operator is the generator of a $C_0$-semigroups. A fundamental result in this direction is the so-called Lumer--Phillips theorem:
\begin{thm}\label{thm:Lumer--Phillips}
	Let $w\ge 0$, and $A$ be an operator. Then $A$ is the generator of a quasicontractive $C_0$-semigroup with parameter $w$ if and only if the following conditions hold:
	\begin{enumerate}[(i)]
		\item $A$ is closed and densely defined.
		\item $A$ is $w$-dissipative, i.e. 
		\begin{align*}
		\Re \left\langle Ay,y\right\rangle\le w\|y\|^2,\quad y\in D(A).
		\end{align*}
		\item There exists $\lambda>w$ such that $\lambda-A:D(A)\to\HC$ is surjective.	
	\end{enumerate}
\end{thm}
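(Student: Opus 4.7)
The plan is to first reduce to the contractive case $w=0$, which handles both directions uniformly. Specifically, $A$ satisfies conditions (i)--(iii) with parameter $w$ if and only if $B:=A-wI$ satisfies them with parameter $0$; and $A$ generates a semigroup $(T_t)_{t\ge 0}$ that is quasicontractive with parameter $w$ if and only if $B$ generates the contractive semigroup $(e^{-wt}T_t)_{t\ge 0}$. Hence I may assume $w=0$ throughout.

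For the necessity direction with $w=0$: (i) is a standard fact about generators of $C_0$-semigroups. For (ii), I would differentiate $t\mapsto \|T_ty\|^2$ from the right at $t=0$ for $y\in D(A)$; the right derivative equals $2\Re\langle Ay,y\rangle$, and contractivity forces the map to be nonincreasing, hence the derivative is $\le 0$. For (iii), the Laplace-transform formula \eqref{eq:LaplaceTransformOfSemigroup} converges absolutely for every $\lambda>0$ and a short computation shows that the resulting bounded operator is a two-sided inverse of $\lambda-A$.

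For the sufficiency direction with $w=0$, the plan is to verify the Hille--Yosida resolvent estimates and invoke the Hille--Yosida generation theorem. Dissipativity together with Cauchy--Schwarz gives
\begin{align*}
\lambda\|y\|^2 \le \Re\langle(\lambda-A)y,y\rangle \le \|(\lambda-A)y\|\,\|y\|,\quad \lambda>0,\ y\in D(A),
\end{align*}
so $\lambda-A$ is bounded below by $\lambda$ on $D(A)$ for every $\lambda>0$. Combined with surjectivity (iii) at some $\lambda_0>w=0$ and closedness of $A$, this yields $\lambda_0\in\rho(A)$ with $\|(\lambda_0-A)^{-1}\|\le 1/\lambda_0$. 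A Neumann-series argument then shows $\rho(A)$ contains the open disc of radius $\lambda_0$ around $\lambda_0$; iterating, one obtains $(0,\infty)\subseteq\rho(A)$, with the bound $\|(\lambda-A)^{-1}\|\le 1/\lambda$ preserved throughout by the same dissipative estimate above. The power estimate $\|(\lambda-A)^{-n}\|\le \lambda^{-n}$ is then immediate, and the Hille--Yosida theorem supplies the contractive $C_0$-semigroup generated by $A$.

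The main obstacle is the propagation step lifting surjectivity at a single $\lambda_0$ to invertibility on the whole half-line $(0,\infty)$; this is a clean but slightly delicate open/closed (or iterated Neumann series) argument using that the resolvent norm bound $1/\lambda$ forces the next Neumann disc to again reach the boundary $0$. The remaining bootstrap to $\|(\lambda-A)^{-n}\|\le \lambda^{-n}$ and the invocation of Hille--Yosida are standard once the first-order resolvent bound is established on all of $(0,\infty)$.
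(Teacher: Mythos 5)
Your proposal is correct. The paper does not prove this theorem at all: it is quoted as a classical result (Lumer--Phillips), and the only argument the paper supplies is the remark that the general case follows from the case $w=0$ by passing to $A-w$ and $(e^{-wt}T_t)_{t\ge 0}$ --- which is exactly your opening reduction. Beyond that, your necessity argument (differentiating $t\mapsto\|T_ty\|^2$ at $t=0^+$ and using the Laplace-transform formula \eqref{eq:LaplaceTransformOfSemigroup}) and your sufficiency argument (the dissipativity estimate $\|(\lambda-A)y\|\ge\lambda\|y\|$, propagation of invertibility from a single $\lambda_0$ to all of $(0,\infty)$ via Neumann series, the bound $\|(\lambda-A)^{-n}\|\le\lambda^{-n}$, and Hille--Yosida) constitute the standard textbook proof, and all the steps check out; in particular the propagation step works because the uniform resolvent bound $1/\lambda$ guarantees each Neumann disc has radius reaching down to $0$, so the set of good $\lambda$ is open and closed in $(0,\infty)$.
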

\begin{rem}
	The above theorem is typically stated for $w=0$. The general version follows if we consider the operator $A-w$, and the corresponding semigroup $(e^{-wt}T_t)_{t\ge 0}$.
	
	An elementary calculation using the inner product shows that $A$ is $0$-dissipative if and only if
	\begin{align*}
	\|(\lambda -A)y\|\ge \lambda \|y\|,\quad y\in D(A),\lambda >0.
	\end{align*}
	Consequently, $\lambda -A$ is injective in this case. Moreover, the above inequality turns out to be the appropriate analogue of $(ii)$ when studying semigroups of operators on Banach spaces.
	
	If the above conditions $(i)-(iii)$ hold, then $\lambda-A:D(A)\to\HC$ is in fact surjective for \textit{any} $\lambda>w$.
\end{rem}

\subsection{Operator measures}\label{subsec:OperatorMeasures}

Let $\mathfrak{S}$ denote the Borel $\sigma$-algebra of subsets of $\TB$. An \textit{$\LC_+$-valued measure} is a finitely additive set function $\mu:\mathfrak{S}\to\LC_+$ with the property that for every $x,y\in\EC$, the set function $\mu_{x,y}:E\mapsto \left\langle \mu(E)x,y\right\rangle$ defines a complex regular Borel measure. For each $E\in\mathfrak{S}$, it holds that
\begin{align}\label{eq:CauchySchwarzForOperatorMeasures}
|\mu_{x,y}|(E)\le \mu_{x,x}(E)^{1/2}\mu_{y,y}(E)^{1/2}.
\end{align}
We refer to the proof of \cite[Proposition 1.1]{Olofsson2004:OperatorValuedN-HarmonicMeasureInThePolydisc}.

Given a bounded (Borel) measurable function $f:\TB\to\CB$, we can define the sesquilinear form $J_f:(x,y)\mapsto \int_{\TB}f\diff\mu_{x,y}$. It follows from \eqref{eq:CauchySchwarzForOperatorMeasures}, that
\begin{align*}
|J_f(x,y)|\le \|f\|_\infty\|\mu(\TB)\|\|x\|\|y\|.
\end{align*}
By standard functional analytic considerations, the above inequality implies the existence of a uniquely determined operator $I_f\in\LC$ such that $\left\langle I_f\, x,y\right\rangle =\int_{\TB}f\diff\mu_{x,y}$. We denote the operator $I_f$ by $\int_{\TB}f\diff\mu$. The integral thus defined satisfies the triangle type inequality
\[
\|\int_{\TB}f\diff\mu\|\le \|f\|_\infty \|\mu(\TB)\|.
\]

We will need the following version of the Cauchy--Schwarz inequality: 
\begin{lem}\label{lemma:VersionOfCauchySchwarz}
	Let $\mu$ be an $\LC_+$-valued measure. If $x,y\in\EC$, and $f,g:\TB\to\CB$ are Borel measurable functions, then
	\begin{align}\label{eq:CauchySchwarz}
	\int_\TB |fg|\diff|\mu_{x,y}|\le\left(\int_\TB |f|^2\diff\mu_{x,x}\right)^{1/2}\left(\int_\TB |g|^2\diff\mu_{y,y}\right)^{1/2}.
	\end{align}
\end{lem}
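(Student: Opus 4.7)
The plan is a standard approximation argument: first establish the inequality for non-negative simple functions, and then pass to general measurable functions via monotone convergence. Since the stated inequality involves only $|f|$ and $|g|$, we may assume $f,g\ge 0$ from the outset.

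First I would handle the case where $f$ and $g$ are simple. By taking a common refinement, we can write $f=\sum_{k=1}^N a_k\1_{E_k}$ and $g=\sum_{k=1}^N b_k\1_{E_k}$ with $a_k,b_k\ge 0$ and $\{E_k\}$ a finite disjoint partition of $\TB$. Then
\begin{align*}
\int_\TB fg\diff|\mu_{x,y}|=\sum_{k=1}^N a_kb_k\,|\mu_{x,y}|(E_k)
\le\sum_{k=1}^N a_kb_k\,\mu_{x,x}(E_k)^{1/2}\mu_{y,y}(E_k)^{1/2},
\end{align*}
where the last inequality is the set-wise estimate \eqref{eq:CauchySchwarzForOperatorMeasures}. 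Applying the ordinary discrete Cauchy--Schwarz inequality to the pair of vectors $\bigl(a_k\mu_{x,x}(E_k)^{1/2}\bigr)_k$ and $\bigl(b_k\mu_{y,y}(E_k)^{1/2}\bigr)_k$, the right-hand side is bounded by
\begin{align*}
\left(\sum_{k=1}^N a_k^2\,\mu_{x,x}(E_k)\right)^{1/2}\!\!\left(\sum_{k=1}^N b_k^2\,\mu_{y,y}(E_k)\right)^{1/2}
=\left(\int_\TB f^2\diff\mu_{x,x}\right)^{1/2}\!\!\left(\int_\TB g^2\diff\mu_{y,y}\right)^{1/2},
\end{align*}
which is \eqref{eq:CauchySchwarz} for simple functions.

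Next I would extend to arbitrary non-negative measurable $f,g$. Choose increasing sequences of simple functions $0\le f_n\uparrow f$ and $0\le g_n\uparrow g$; then also $f_ng_n\uparrow fg$ pointwise. Applying the simple-function case yields
\begin{align*}
\int_\TB f_ng_n\diff|\mu_{x,y}|\le\left(\int_\TB f_n^2\diff\mu_{x,x}\right)^{1/2}\left(\int_\TB g_n^2\diff\mu_{y,y}\right)^{1/2},
\end{align*}
and monotone convergence (applied to each of the three integrals against the finite positive measures $|\mu_{x,y}|$, $\mu_{x,x}$, $\mu_{y,y}$) lets us pass to the limit, giving \eqref{eq:CauchySchwarz} in full generality (with the convention $0\cdot\infty=0$ if one side of the right-hand product vanishes, noting that in that case $|\mu_{x,y}|$ is absolutely continuous with respect to $\mu_{x,x}$ or $\mu_{y,y}$ on the relevant set by \eqref{eq:CauchySchwarzForOperatorMeasures}).

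There is no real obstacle here; the only mildly delicate point is the reduction to a common partition in the simple-function step, but this is immediate since any two finite measurable partitions have a common refinement. Everything else is a direct application of \eqref{eq:CauchySchwarzForOperatorMeasures}, the numerical Cauchy--Schwarz inequality, and monotone convergence.
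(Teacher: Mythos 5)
Your proof is correct and follows exactly the route the paper indicates (the paper's own proof is a two-line sketch: simple functions via \eqref{eq:CauchySchwarzForOperatorMeasures} and discrete Cauchy--Schwarz, then standard approximation). Your write-up simply fills in the details of that sketch, and the common-refinement and monotone-convergence steps are handled correctly.
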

\begin{proof}
	When $f$ and $g$ are simple functions, \eqref{eq:CauchySchwarz} follows from \eqref{eq:CauchySchwarzForOperatorMeasures}, and the Cauchy--Schwarz inequality for finite sums. General functions are approximated in the standard fashion.
\end{proof}

Two instances of the above integral will be particularly interesting to us, namely the Fourier coefficients
\begin{align*}
\hat \mu(n)=\frac{1}{2\pi }\int_{\TB}\conj{\zeta^n}\diff\mu(\zeta),\quad n\in\ZB,
\end{align*}
and the Poisson extension
\begin{align*}
P_\mu(z)=\frac{1}{2\pi }\int_{\TB}\frac{1-|z|^2}{|\zeta-z|^2}\diff\mu(\zeta),\quad z\in\DB.
\end{align*}
If we let $r=|z|$, then 
\[
\frac{1-|z|^2}{|\zeta-z|^2}=\sum_{n\in\ZB} r^{|n|}\left(\frac{z}{r}\right)^n \conj{\zeta^n}.
\]
By the Weierstrass test, this series converges uniformly in $\zeta$. Using term by term integration, we conclude that 
\[
P_\mu(z) = \sum_{n\in\ZB} \hat{\mu}(n)r^{|n|}\left(\frac{z}{r}\right)^n.
\]

With the above construction, the integral $\int f\diff\mu$ is only defined when $f$ is bounded. As a remedy for this, adequate for our purposes, we use the following construction: Let $\mu$ be an $\LC_+$-valued measure, and $h$ a scalar-valued function. If there exists $C>0$ such that
\[
\int_{\TB}|h|\diff\mu_{x,x}\le C\|x\|^2,\quad x\in\EC,
\]
then one can define a new set function $\mu_h$ by
\[
\left\langle \mu_h(E)x,y\right\rangle =\int_E h\diff\mu_{x,y}.
\]
By Lemma \ref{lemma:VersionOfCauchySchwarz}, the above right-hand side has modulus less than
\[
\left(\int_\TB |h|\diff\mu_{x,x}\right)^{1/2}\left(\int_\TB |h|\diff\mu_{y,y}\right)^{1/2}\le C\|x\|\|y\|.
\]
This estimate implies that $\mu_h$ is another $\LC_+$-valued measure. If $f$ is bounded, then we may take $\int f\diff \mu_h$ as a definition of $\int fh\diff{\mu}$.

\begin{rem}
	We will only use the above construction of $\int fh\diff{\mu}$ in the setting where $h$ is a fixed function. However, the following may be of independent interest: If $f_1h_1=f_2h_2$, and the measures $\mu_i=\mu_{h_i}$ are defined as above, then
	\begin{align*}
	\left\langle \int f_i\diff{\mu}_i\, x,y\right\rangle
	=
	\int f_i\diff{(\mu_i)_{x,y}}
	=
	\int f_ih_i\diff{\mu_{x,y}}.
	\end{align*}
	Hence,
	\begin{align*}
	\int f_1h_1\diff{\mu}
	=
	\int f_2h_2\diff{\mu}.
	\end{align*}
\end{rem}

Let $f,g:\TB\to\EC$ be continuous functions, and identify these with their respective Poisson extensions. For $0<r<1$ and $\zeta\in\TB$, 
\begin{align*}
\left\langle P_\mu(r\zeta)f(r\zeta),g(r\zeta)\right\rangle = \sum_{k,l,n\in\ZB} \left\langle \hat \mu(n)\hat f(k),\hat g(l)\right\rangle r^{|n|+|k|+|l|}\zeta^{n+k-l},
\end{align*}
and the right-hand side converges uniformly in $\zeta$. Integrating with respect to $\diff\lambda(\zeta)$ yields
\[
\int_{\TB}\left\langle P_\mu(r\zeta )f(r\zeta),g(r\zeta)\right\rangle\diff \lambda(\zeta) = 2\pi \sum_{k,l\in\ZB} r^{|k|+|l|+|k-l|}\left\langle \hat{\mu}(l-k) \hat f(k),\hat g(l)\right\rangle.
\]
This motivates us to define $\int \left\langle\diff\mu\,\cdot,\cdot\right\rangle$ by
\begin{align}\label{eq:DefinitionOfQuadraticIntegral}
\frac{1}{2\pi}\int_{\TB}\left\langle \diff \mu \, f,g\right\rangle = \sum_{k,l\in\ZB} \left\langle \hat{\mu}(l-k) \hat f(k),\hat g(l)\right\rangle,
\end{align}
provided that the above right-hand side is absolutely convergent. 

It seems clear that any reasonable definition of $\int \left\langle\diff\mu\,\cdot,\cdot\right\rangle$ should satisfy \eqref{eq:DefinitionOfQuadraticIntegral}. On the other hand, it is a bit awkward to require so much regularity for a function to be square integrable. The next example is a digression from the primary topic of this paper, but may still be of interest.
\begin{ex}
	For $k\in\ZB_{\ge 1}$, let $I_k$ denote the arc $\{e^{it};t\in[2^{-k},2^{1-k})\}\subset \TB$, and define the set function $\mu$ by
	\begin{align*}
	\left\langle \mu(E)x,y\right\rangle =\sum_{k=1}^\infty 2^k\lambda(E\cap I_k)\left\langle x,e_k\right\rangle \left\langle e_k,y\right\rangle,\quad x,y\in\EC,
	\end{align*}
	where $(e_k)_{k=1}^\infty$ is some orthonormal sequence in $\EC$. Then $\mu$ is an $\LC_+$-valued measure. For a simple function $f=\sum_{n}x_n\1_{E_n}$, it might seem reasonable to define $\int \left\langle \diff \mu\, f,f\right\rangle$ as the finite sum
	\[\tag{naive}
	\sum_n \left\langle \mu(E_n)x_n,x_n\right\rangle.
	\]
	However, if $f_N=\sum_{k=1}^Ne_k\1_{I_k}$, then the above sum is equal to $N$, even though $\|f_N\|_{L^\infty}=1$. This may help explain why we have chosen \eqref{eq:DefinitionOfQuadraticIntegral} as our definition of $\int \left\langle \diff \mu\,\cdot,\cdot\right\rangle$.
\end{ex}

\subsection{Function spaces}

The space of analytic polynomials $\sum_{k=0}^{N}a_kz^k$ with coefficients $a_k\in\EC$ is denoted by $\PC_a(\EC)$. As a notational convention, we write $\PC_a$ in place of $\PC_a(\CB)$. The same principle applies to all function spaces described below.

Let $f$ be a function which is analytic in a neighbourhood of the origin. The $k$th Maclaurin coefficient of $f$ is denoted by $\hat f(k)$. By $\DC_a(\EC)$, we denote the space of $\EC$-valued analytic functions whose Maclaurin coefficients $(\hat f(k))_{k=0}^\infty$ decay faster than any power of $k$. These are precisely the analytic functions on $\DB$ which extend to smooth functions on $\clos{\DB}$.

The \textit{Hardy space} $H^2(\EC)$ consists of all functions $f:z\mapsto \sum_{k=0}^\infty \hat f(k)z^k$, where $\hat f(k)\in\EC$, and $\|f\|_{H^2(\EC)}^2:=\sum_{k=0}^\infty \|\hat f(k)\|_\EC^2<\infty$. Using a standard radius of convergence formula, functions in $H^2(\EC)$ are seen to be analytic on $\DB$. The subspaces $\PC_a(\EC)$ and $\DC_a(\EC)$ are dense in $H^2(\EC)$. A direct application of the Cauchy-Schwarz inequality shows that point evaluations are bounded operators from $H^2(\EC)$ to $\EC$. Specifically,
\begin{align}\label{eq:ReproducingProperty}
\|f(z)\|\le \frac{\|f\|_{H^2(\EC)}}{\left(1-|z|^2\right)^{1/2}},\quad z\in\DB,\ f\in H^2(\EC).
\end{align}
If $f\in H^2(\EC)$, then the radial boundary value $f(\zeta):=\lim_{r\to 1^-}f(r\zeta)$ exists for $\lambda$-a.e. $\zeta\in\TB$. The radial boundary value function satisfies
\begin{align}\label{eq:NormOfBoundaryValues}
\|f\|_{H^2(\EC)}^2=\frac{1}{2\pi}\int_{\TB}\|f(\zeta)\|^2\diff\lambda (\zeta).
\end{align}
In the case where $\EC=\CB$, these facts will be included in any reasonable introduction to Hardy spaces. For general $\EC$, we refer to \cite[Chapter~III]{Nikolski2002:OperatorsFunctionsAndSystems:AnEasyReading.Vol.I}, or \cite[Chapter~4]{Rosenblum-Rovnyak1985:HardyClassesAndOperatorTheory}.

Given an $\LC_+$-valued measure $\mu$, and an analytic function $f:\DB\to\EC$, we define the corresponding \textit{Dirichlet integral}
\begin{align*}
\DC_{\mu}(f):=\frac{1}{\pi}\int_{\DB}\left\langle P_\mu(z)f'(z),f'(z)\right\rangle \diff A(z).
\end{align*}
The integrand is non-negative, so the integral is well-defined. For $z=\rho\zeta$, $\zeta\in\TB$, the power series expansions of $P_\mu$ and $f$ yield
\begin{align*}
\left\langle P_\mu(z)f'(z),f'(z)\right\rangle = \sum_{n=-\infty}^\infty\sum_{k,l=0}^\infty kl\left\langle \hat \mu(n)\hat f(k),\hat f(l)\right\rangle \rho^{|n|+k+l-2}\zeta^{n+k-l}.
\end{align*}
Integrating this identity with respect to $\rho\diff\rho \diff\lambda(\zeta)$, over $0<\rho<r<1$, and $\zeta\in\TB$, gives us the formula
\begin{multline*}
\frac{1}{\pi}\int_{r\DB}\left\langle P_\mu(z)f'(z),f'(z)\right\rangle\diff A(z) 
\\
= 
\sum_{k,l=0}^\infty \min(k,l)r^{2\max(k,l)}\left\langle \hat{\mu}(l-k) \hat f(k),\hat f(l)\right\rangle.
\end{multline*}
By monotone convergence,
\begin{align}\label{eq:DirichletIntegralSeriesRepresentation}
\DC_{\mu}(f)=\lim_{r\to 1^-}\sum_{k,l=0}^\infty \min(k,l)r^{2\max(k,l)}\left\langle \hat{\mu}(l-k) \hat f(k),\hat f(l)\right\rangle.
\end{align}
If the resulting series is absolutely convergent (say if $f\in\DC_a(\EC)$), then we may of course replace the $\lim_{r\to 1^-}$ with an evaluation at $r=1$.

We define the \textit{harmonically weighted Dirichlet space} $\DC_{\mu}^2(\EC)$ as the space of functions $f\in H^2(\EC)$ for which $\DC_{\mu}(f)<\infty$. In particular, $\DC_a(\EC)\subseteq \DC_{\mu}^2(\EC)$ by \eqref{eq:DirichletIntegralSeriesRepresentation}. We equip $\DC_{\mu}^2(\EC)$ with the norm $\|\cdot\|_{\DC_{\mu}^2(\EC)}$ given by
\begin{align*}
\|f\|_{\DC_{\mu}^2(\EC)}^2:=\|f\|_{H^2(\EC)}^2+\DC_{\mu}(f).
\end{align*}
If $f,g\in\DC _\mu^2(\EC)$, then we define the sesquilinear Dirichlet integral
\begin{align*}
\DC_{\mu}(f,g):=\frac{1}{\pi}\int_{\DB}\left\langle P_\mu(z)f'(z),g'(z)\right\rangle \diff A(z),
\end{align*}
which is finite by the Cauchy--Schwarz inequality.

\begin{prop}[{\cite[Corollary 3.1]{Olofsson2004:AVonNeumann-WoldDecompositionOfTwo-Isometries}}]\label{prop:DensityOfPolynomials}
	Let $\mu$ be an $\LC_+$-valued measure. Then $\PC_a(\EC)$ is dense in the corresponding Dirichlet space $\DC_{\mu}^2(\EC)$.
\end{prop}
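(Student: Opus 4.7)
My approach would be a two-step approximation. First I would show that every $f \in \DC_\mu^2(\EC)$ is approximated in the Dirichlet-space norm by its dilations $f_r(z):=f(rz)$, $r\in(0,1)$, each of which lies in $\DC_a(\EC)$. Second, I would show that every $g \in \DC_a(\EC)$ is approximated by its Taylor polynomials $\sum_{k=0}^{N}\hat g(k)z^k \in \PC_a(\EC)$. Composed, the two steps give the claim.

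For the second step, let $g \in \DC_a(\EC)$, so that the coefficients $(\hat g(k))$ decay faster than any polynomial in $k$. Since $\|\hat\mu(n)\|\le \|\mu(\TB)\|/(2\pi)$ for every $n$, and $\min(k,l)\le \sqrt{kl}$, we obtain
\[
\sum_{k,l\geq 0}\min(k,l)\,\|\hat\mu(l-k)\|\,\|\hat g(k)\|\,\|\hat g(l)\|
\;\le\;
\frac{\|\mu(\TB)\|}{2\pi}\Bigl(\sum_{k\geq 0}\sqrt{k}\,\|\hat g(k)\|\Bigr)^2
<\infty.
\]
Thus the double series in \eqref{eq:DirichletIntegralSeriesRepresentation} is absolutely convergent with $r=1$ and may be manipulated termwise. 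Writing $g_N:=\sum_{k=0}^{N}\hat g(k)z^k$, the Maclaurin coefficients of $g-g_N$ are supported on indices $>N$, so $\DC_\mu(g-g_N)$ is a tail of this absolutely convergent series and vanishes as $N\to\infty$. Together with the routine $H^2$-convergence $g_N \to g$, this gives convergence in $\DC_\mu^2(\EC)$.

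For the first step, take $f \in \DC_\mu^2(\EC)$ and set $f_r(z):=f(rz)$. Since $\widehat{f_r}(k)=r^k\hat f(k)$ decays geometrically in $k$, $f_r\in\DC_a(\EC)$, and $f_r\to f$ in $H^2(\EC)$ by dominated convergence on the Fourier side. For the Dirichlet part, the substitution $w=rz$ yields
\[
\DC_\mu(f_r)=\frac{1}{\pi}\int_{r\DB}\langle P_\mu(w/r)f'(w),f'(w)\rangle\,dA(w),
\]
with an analogous formula for the sesquilinear form $\DC_\mu(f,f_r)$. Both integrands converge pointwise as $r\to 1^-$ to $\langle P_\mu(w)f'(w),f'(w)\rangle$ and its polarized analogue, and Fatou's lemma immediately yields $\liminf_{r\to 1^-}\DC_\mu(f_r)\geq \DC_\mu(f)$. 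Combined with the matching $\limsup$-bound and the polarization
\[
\DC_\mu(f-f_r)=\DC_\mu(f)-2\Re\DC_\mu(f,f_r)+\DC_\mu(f_r),
\]
one concludes $\DC_\mu(f-f_r)\to 0$.

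The hard part will be the $\limsup$-bound $\limsup_{r\to 1^-}\DC_\mu(f_r)\le\DC_\mu(f)$. Since $P_\mu(w/r)$ need not be dominated by $P_\mu(w)$ in every radial direction, a direct dominated-convergence argument is unavailable. My preferred route is to expand $\DC_\mu(f_r)$ via the series representation \eqref{eq:DirichletIntegralSeriesRepresentation}, truncate each defining integral to a disc $R\DB$ with $R<1$ (where absolute convergence of the resulting series lets dominated convergence in $r$ operate termwise), and then carefully interchange the limits $R\to 1^-$ and $r\to 1^-$ using the monotonicity of the $R$-integral. Executing this double-limit interchange is the technical heart of the argument.
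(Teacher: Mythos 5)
The paper gives no proof of this proposition; it is quoted verbatim from Olofsson, so there is no internal argument to compare against, and your proposal has to stand on its own. Its second half does: for $g\in\DC_a(\EC)$ the bound $\min(k,l)\le\sqrt{kl}$ together with $\|\hat\mu(n)\|\le\|\mu(\TB)\|/2\pi$ indeed makes the series in \eqref{eq:DirichletIntegralSeriesRepresentation} absolutely convergent at $r=1$, and the tail estimate for $g-g_N$ is correct. The first half, however, has a genuine gap at exactly the point you flag as the technical heart, and the route you propose for closing it does not work.

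Write $S_r(R):=\frac1\pi\int_{R\DB}\left\langle P_\mu(z)f_r'(z),f_r'(z)\right\rangle\diff A(z)$, so that $S_r(R)$ is nondecreasing in $R$, $\DC_\mu(f_r)=\sup_{R<1}S_r(R)$, and (by uniform convergence on compacts) $\lim_{r\to1^-}S_r(R)=S_1(R)\le\DC_\mu(f)$ for each fixed $R<1$. What you need is $\limsup_{r\to1^-}\sup_RS_r(R)\le\sup_R\lim_{r\to1^-}S_r(R)$, and this inequality goes the wrong way in general; monotonicity in $R$ does not repair it. (Consider $a_r(R)=M\max\bigl(0,\tfrac{R-r}{1-r}\bigr)$: nondecreasing in $R$, tending to $0$ for each fixed $R<1$, yet $\sup_Ra_r(R)=M$ for every $r$.) The missing ingredient is a bound on the annular tail that is \emph{uniform in $r$}, i.e. an estimate of the form $\DC_\mu(f_r)\le C\,\DC_\mu(f)$, or its local version $\DC_\zeta(f_r)\le C\,\DC_\zeta(f)$. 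Such an estimate is true, but it is itself the substantive content of the dilation lemma: in the scalar case it is extracted from the local Douglas formula (Theorem~\ref{thm:LocalDouglasFormula}) in the manner of Richter--Sundberg, and for operator-valued $\mu$ one must additionally decompose into the scalar measures $\mu_{x,y}$ as in Lemma~\ref{lemma:FubiniForDirichletIntegrals}; compare the paper's own Lemma~\ref{lemma:Convergence2}, which carries out precisely this kind of Douglas-formula bookkeeping for the related multipliers $k_r$. A secondary, fixable issue: your polarization step silently requires $\Re\DC_\mu(f,f_r)\to\DC_\mu(f)$, to which Fatou does not apply since that integrand has no sign; once the $\limsup$ bound is available one should instead argue that $f_r\to f$ weakly (boundedness plus pointwise convergence in a reproducing kernel space) and $\limsup\|f_r\|_{\DC_\mu^2(\EC)}\le\|f\|_{\DC_\mu^2(\EC)}$, which gives norm convergence outright. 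An alternative that bypasses dilations entirely is to deduce density from the wandering subspace property of the analytic $2$-isometry $M_z$, with wandering subspace the constants $\EC$.
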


\begin{prop}\label{prop:DefectOperatorFormula}
	Let $\mu$ be an $\LC_+$-valued measure, and $f\in\DC_a(\EC)$. Then 
	\begin{align*}
	\left\langle \beta_{1}(M_z)f,f\right\rangle_{\DC_{\mu}^2(\EC)}=\frac{1}{2\pi}\int_{\TB}\left\langle\diff\mu\,  f,f\right\rangle. 
	\end{align*}
\end{prop}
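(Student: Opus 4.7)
The proof will proceed by a direct series computation, exploiting the fact that for $f \in \mathcal{D}_a(\mathcal{E})$ all relevant sums converge absolutely and the limit in \eqref{eq:DirichletIntegralSeriesRepresentation} can be evaluated at $r=1$. First I would unpack the left-hand side as
\[
\langle \beta_1(M_z) f, f\rangle_{\mathcal{D}_\mu^2(\mathcal{E})} = \|M_z f\|_{\mathcal{D}_\mu^2(\mathcal{E})}^2 - \|f\|_{\mathcal{D}_\mu^2(\mathcal{E})}^2 = \bigl(\|M_z f\|_{H^2(\mathcal{E})}^2 - \|f\|_{H^2(\mathcal{E})}^2\bigr) + \bigl(\mathcal{D}_\mu(M_z f) - \mathcal{D}_\mu(f)\bigr).
\]
The $H^2$-part vanishes because $M_z$ is an isometry on $H^2(\mathcal{E})$ (one shift of Maclaurin coefficients preserves the $\ell^2$-norm), so the entire identity reduces to computing $\mathcal{D}_\mu(M_z f) - \mathcal{D}_\mu(f)$.

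Next I would apply \eqref{eq:DirichletIntegralSeriesRepresentation}. Since $\widehat{M_z f}(k) = \hat f(k-1)$ for $k \ge 1$ and vanishes for $k=0$, substituting and reindexing $(k,l) \mapsto (k+1,l+1)$ gives
\[
\mathcal{D}_\mu(M_z f) = \sum_{k,l=0}^\infty \min(k+1,l+1)\, \langle \hat\mu(l-k)\hat f(k),\hat f(l)\rangle.
\]
The elementary identity $\min(k+1,l+1) = \min(k,l)+1$ then yields
\[
\mathcal{D}_\mu(M_z f) - \mathcal{D}_\mu(f) = \sum_{k,l=0}^\infty \langle \hat\mu(l-k)\hat f(k),\hat f(l)\rangle,
\]
and since $\hat f(k)=0$ for $k<0$, this is exactly the right-hand side of \eqref{eq:DefinitionOfQuadraticIntegral} applied to $f$.

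The only subtlety is justifying the manipulations, namely absolute convergence of all sums and the validity of evaluating the $r \to 1^-$ limit at $r=1$. Since $\mu$ is finite on $\mathbb{T}$ the Fourier coefficients satisfy $\|\hat\mu(n)\| \le \|\mu(\mathbb{T})\|/(2\pi)$ uniformly in $n$, while $f\in\mathcal{D}_a(\mathcal{E})$ forces $\|\hat f(k)\|$ to decay faster than any polynomial. Consequently all three double sums, both with and without the $\min(k,l)$ factor, converge absolutely, which legitimizes the rearrangement and the passage $r=1$ in \eqref{eq:DirichletIntegralSeriesRepresentation}. I do not expect any genuine obstacle; the proof is essentially a bookkeeping exercise in series manipulation, and the crux is the combinatorial identity for $\min(k+1,l+1)$.
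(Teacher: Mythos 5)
Your proof is correct and follows essentially the same route as the paper: reduce to the Dirichlet parts using that $M_z$ is an $H^2(\EC)$-isometry, apply the series representation \eqref{eq:DirichletIntegralSeriesRepresentation}, reindex, and use $\min(k+1,l+1)-\min(k,l)=1$. The absolute-convergence point you raise is exactly the reason the paper restricts to $f\in\DC_a(\EC)$, where evaluation at $r=1$ is already noted to be legitimate.
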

\begin{proof}
	By \eqref{eq:NormOfBoundaryValues}, $M_z:H^2(\EC)\to H^2(\EC)$ is an isometry. Hence,
	\[
	\left\langle\beta_{1}(M_z)f,f\right\rangle_{\DC_{\mu}^2(\EC)}=\DC_{\mu}^2(M_zf)-\DC_{\mu}^2(f).
	\]
	By \eqref{eq:DirichletIntegralSeriesRepresentation},
	\begin{align*}
	\DC_{\mu}^2(M_zf)-\DC_{\mu}^2(f)
	= {}&
	\sum_{k,l=0}^\infty \min(k,l)\left\langle\hat{\mu}(l-k)\hat f(k-1),\hat f(l-1)\right\rangle  
	\\
	&-
	\sum_{k,l=0}^\infty \min(k,l)\left\langle\hat{\mu}(l-k)\hat f(k),\hat f(l)\right\rangle 
	\\
	= {}&
	\sum_{k,l=0}^\infty \left\langle\hat{\mu}(l-k)\hat f(k),\hat f(l)\right\rangle.
	\end{align*}
	The above right-hand side equals $\frac{1}{2\pi}\int_{\TB}\left\langle\diff\mu\,  f,f\right\rangle$ by definition. 
\end{proof}

Given an analytic function $f:\DB\to\CB$, and $\zeta\in\TB$, we define the corresponding \textit{local Dirichlet integral}
\begin{align}\label{eq:DefinitionOfLocalDirichletIntegral}
\DC_{\zeta}(f):=\frac{1}{\pi}\int_{\DB}|f'(z)|^2\frac{1-|z|^2}{|\zeta-z|^2}\diff A(z).
\end{align}
This is a convenient shorthand for the Dirichlet integral $\DC_{\delta_\zeta}(f)$, where $\delta_\zeta$ denotes a (scalar) unital point mass at $\zeta$. If $\mu$ is a positive scalar-valued measure, then it is immediate from Fubini's theorem that
\begin{align}\label{eq:FubiniForPositiveMeasures}
\DC_\mu(f)=\frac{1}{2\pi}\int_{\TB}\DC_{\zeta}(f)\diff\mu(\zeta).
\end{align}
In particular, if $\DC_\mu(f)<\infty$, then $\DC_{\zeta}(f)<\infty $ for $\mu$-a.e. $\zeta\in\TB$.

A useful tool for calculating local Dirichlet integrals is the so-called \textit{local Douglas formula}. The proof, and a slightly more general version of the statement, can be found in \cite[Chapter~7.2]{ElFallah-Kellay-Mashreghi-Ransford2014:APrimerOnTheDirichletSpace}:
\begin{thm}\label{thm:LocalDouglasFormula}
	Let $f\in H^2$, and $\zeta\in\TB$. If $\DC_{\zeta}(f)<\infty$, then the radial limit $f(\zeta)$ exists. Moreover, if we let $F(z)=\frac{f(z)-f(\zeta)}{z-\zeta}$, then $\DC_{\zeta}(f)=\|F\|_{H^2}^2$.
\end{thm}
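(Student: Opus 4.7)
The plan is to reduce the theorem to an explicit Fourier-coefficient identity on analytic polynomials and then transfer it to $H^2$ by approximation. First I would exploit rotational invariance: the change of variables $w = \bar\zeta z$ in the area integral defining $\DC_\zeta(f)$ gives $\DC_\zeta(f) = \DC_1(g)$ with $g(w) = f(\zeta w)$, and the candidate quotient $F$ transforms accordingly. Hence I may assume $\zeta = 1$ throughout.

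For an analytic polynomial $f(z) = \sum_{n=0}^N a_n z^n$ I would compute both sides directly. Inserting the Fourier expansion
\[
\frac{1-|z|^2}{|1-z|^2} = \sum_{k \in \ZB} r^{|k|} e^{ik\theta}, \quad z = re^{i\theta},
\]
into the defining integral of $\DC_1(f)$ and working in polar coordinates, orthogonality of $\{e^{in\theta}\}$ collapses the triple sum to
\[
\DC_1(f) = \sum_{m,n=1}^N \min(m,n)\,\overline{a_m}a_n.
\]
On the other hand, $F(z) = \frac{f(z)-f(1)}{z-1} = \sum_{k=0}^{N-1}\bigl(\sum_{n>k}a_n\bigr)z^k$, and since a pair $(m,n)$ contributes to precisely the $k$'s with $0 \leq k < \min(m,n)$, direct expansion gives $\|F\|_{H^2}^2 = \sum_{m,n=1}^N \min(m,n)\,\overline{a_m}a_n$. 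The polynomial identity $\DC_1(f) = \|F\|_{H^2}^2$ follows.

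For a general $f \in H^2$ with $\DC_1(f) < \infty$, I would apply the polynomial identity to the Maclaurin partial sums $f_N$. The hypothesis $\DC_1(f)<\infty$, combined with the series computation above applied to the tails, implies $\|F_N - F_M\|_{H^2}^2 = \DC_1(f_N - f_M) \to 0$, so $(F_N)$ is Cauchy in $H^2$ with some limit $F$ satisfying $\|F\|_{H^2}^2 = \DC_1(f)$. Evaluating the polynomial identity $F_N(z)(z-1) = f_N(z) - f_N(1)$ at $z=0$ yields $f_N(1) = \hat f(0) + F_N(0)$, so the scalar limit $\tilde f := \hat f(0) + F(0)$ exists; passing to the limit in the identity at arbitrary $z \in \DB$ gives $f(z) - \tilde f = F(z)(z-1)$. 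Finally, from this identity and the point-evaluation bound \eqref{eq:ReproducingProperty} applied to $F$, one has $|f(r) - \tilde f| \leq \|F\|_{H^2}\sqrt{(1-r)/(1+r)} \to 0$ as $r \to 1^-$, so $\tilde f$ is the radial limit $f(1)$ and $F = (f - f(1))/(z-1)$ as claimed.

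The main obstacle is the approximation step: since the integrand $|f'|^2 P_{\delta_1}$ is not monotone in $N$, a direct comparison $\DC_1(f_N) \to \DC_1(f)$ in the integral is delicate. Working instead through the series representation and the $H^2$ norm of $F_N$ sidesteps this---the Cauchy property follows from the finiteness of $\DC_1(f)$ once the polynomial identity is in hand---and the Taylor trick $f_N(1) = \hat f(0) + F_N(0)$ yields both the existence of the numerical limit and its identification with the radial limit in one stroke.
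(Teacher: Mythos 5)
The paper itself does not prove this theorem; it defers to \cite[Chapter~7.2]{ElFallah-Kellay-Mashreghi-Ransford2014:APrimerOnTheDirichletSpace}, so your attempt can only be judged on its own terms. Your rotation reduction to $\zeta=1$ and the polynomial identity $\DC_1(p)=\|P\|_{H^2}^2=\sum_{m,n\ge 1}\min(m,n)\,\overline{a_m}a_n$ are both correct, as is the final step identifying the limit with the radial limit. The fatal gap is exactly at the point you flag as ``the main obstacle'': the claim that $\DC_1(f)<\infty$ forces $\|F_N-F_M\|_{H^2}^2=\DC_1(f_N-f_M)\to 0$ is false. Writing out the coefficients of $F_N-F_M$ for $M<N$ gives
\[
\|F_N-F_M\|_{H^2}^2=M\Bigl|\sum_{n=M+1}^{N}a_n\Bigr|^2+\sum_{k=M}^{N-1}\Bigl|\sum_{n=k+1}^{N}a_n\Bigr|^2,
\]
and the first term need not vanish: the quadratic form with kernel $\min(m,n)$ has off-diagonal terms of no fixed sign, so finiteness of the full form does not dominate its ``tail blocks''. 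Truncation of the Taylor series is not a bounded operation for this form. Concretely, let $g(z)=\sum_{j\ge 1} N_j^{-1/2}z^{N_j}$ with $N_j=2^{2^j}$, so $g\in H^2$, and put $f(z)=(z-1)g(z)$. Then $f\in H^2$ and (by the theorem itself, which is a known true statement, with $f(1)=0$ and $F=g$) $\DC_1(f)=\|g\|_{H^2}^2<\infty$; yet $a_n=b_{n-1}-b_n$ gives $\sum_{n=M+1}^{M+1}a_n=b_M-b_{M+1}=N_j^{-1/2}$ for $M=N_j$, whence $\|F_{M+1}-F_M\|_{H^2}^2\ge M|b_M|^2=1$ for every $j$. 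So $(F_N)$ is not Cauchy, and in fact $\DC_1(f_N)$ oscillates and does not converge to $\DC_1(f)$.

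The lesson is that Taylor partial sums are the wrong approximating sequence for local Dirichlet integrals; this is the same subtlety that forces the paper to prove density of polynomials in $\DC_\mu^2(\EC)$ via Proposition~\ref{prop:DensityOfPolynomials} rather than by truncation. A repair along your lines would replace $f_N$ by the radial dilations $f_r(z)=f(rz)$, for which one can control $\DC_1(f_r)$ by $\DC_1(f)$ and pass to the limit, or follow the Richter--Sundberg route of first establishing the boundary-integral form $\DC_\zeta(f)=\frac{1}{2\pi}\int_{\TB}\bigl|f(\lambda)-f(\zeta)\bigr|^2|\lambda-\zeta|^{-2}\diff\lambda(\lambda)$, which is what the cited reference does. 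Your polynomial computation remains a useful ingredient, but as written the bridge from polynomials to general $f\in H^2$ does not hold.
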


Recall that a function $\theta\in H^2$ is called \textit{inner} if $|\theta(\zeta)|=1$ for $\lambda$-a.e. $\zeta\in\TB$. If $\theta$ is inner and $f\in\DC_a$, then
\begin{align}\label{eq:LocalDirichletFormula}
\DC_{\zeta}(\theta f)=\DC_{\zeta}(f)+|f(\zeta)|^2\DC_{\zeta}(\theta),
\end{align}
e.g. \cite[Theorem~7.6.1]{ElFallah-Kellay-Mashreghi-Ransford2014:APrimerOnTheDirichletSpace}. For inner functions, the local Dirichlet integral $\DC_{\zeta}(\theta)$ may be computed as
\begin{align}\label{eq:LocalDirichletInner}
\DC_{\zeta}(\theta)=\lim_{r\to 1^-}\frac{1-|\theta(r\zeta)|^2}{1-r^2},
\end{align}
e.g. \cite[Theorem~7.6.5]{ElFallah-Kellay-Mashreghi-Ransford2014:APrimerOnTheDirichletSpace}. This formula is valid whether or not $\DC_{\zeta}(\theta)<\infty$. If $\theta$ is analytic in a neighbourhood of $\zeta$, then \eqref{eq:LocalDirichletInner} yields $\DC_{\zeta}(\theta)=|\theta'(\zeta)|$.

In the general case of $\LC_+$-valued measures, the concept of a local Dirichlet integral does not appear to be well studied. We derive the following substitute for \eqref{eq:FubiniForPositiveMeasures}.

\begin{lem}\label{lemma:FubiniForDirichletIntegrals}
	Let $\mu$ be an $\LC_+$-valued measure, and $x,y\in\EC$. If $f\in\DC_{\mu_{x,x}}^2$ and $g\in\DC_{\mu_{y,y}}^2$, then the integral
	\begin{align*}
	\DC_{\mu_{x,y}}(f,g)=\frac{1}{2\pi^2}\int_{\DB}\int_{\TB}f'(z)\conj{g'(z)}\frac{1-|z|^2}{|\zeta-z|^2}\diff\mu_{x,y}(\zeta)\diff A(z)
	\end{align*}
	is absolutely convergent, with $|\DC_{\mu_{x,y}}(f,g)|\le \|f\|_{\DC_{\mu_{x,x}}^2}\|g\|_{\DC_{\mu_{y,y}}^2}$. In particular,
	\begin{align*}
	\DC_{\mu_{x,y}}(f,g)=\frac{1}{2\pi}\int_{\TB}\DC_\zeta(f,g)\diff\mu_{x,y}(\zeta).
	\end{align*}
\end{lem}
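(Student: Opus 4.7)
The plan is to first establish absolute convergence with the claimed bound, and then derive the identity with the local Dirichlet integrals as a consequence of Fubini's theorem applied to the complex measure $\mu_{x,y}\otimes A$ on $\TB\times\DB$. The basic strategy is a "double Cauchy--Schwarz": first on the inner integral against $\mu_{x,y}$ using Lemma~\ref{lemma:VersionOfCauchySchwarz}, and then on the outer integral over $\DB$ using the ordinary Cauchy--Schwarz inequality.

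Concretely, I would fix $z\in\DB$ and apply Lemma~\ref{lemma:VersionOfCauchySchwarz} to the scalar functions $h_1(\zeta)=|f'(z)|\,(1-|z|^2)^{1/2}/|\zeta-z|$ and $h_2(\zeta)=|g'(z)|\,(1-|z|^2)^{1/2}/|\zeta-z|$ on $\TB$. This gives, pointwise in $z$,
\begin{align*}
\int_\TB |f'(z)\conj{g'(z)}|\frac{1-|z|^2}{|\zeta-z|^2}\diff |\mu_{x,y}|(\zeta)
\le {}&
\left(\int_\TB |f'(z)|^2\frac{1-|z|^2}{|\zeta-z|^2}\diff\mu_{x,x}(\zeta)\right)^{1/2}\\
& \times\left(\int_\TB |g'(z)|^2\frac{1-|z|^2}{|\zeta-z|^2}\diff\mu_{y,y}(\zeta)\right)^{1/2}.
\end{align*}
Integrating in $z$ against $\diff A$, a scalar Cauchy--Schwarz inequality then produces the product
\[
\left(\int_\DB\int_\TB |f'(z)|^2\frac{1-|z|^2}{|\zeta-z|^2}\diff\mu_{x,x}(\zeta)\diff A(z)\right)^{1/2}\left(\int_\DB\int_\TB |g'(z)|^2\frac{1-|z|^2}{|\zeta-z|^2}\diff\mu_{y,y}(\zeta)\diff A(z)\right)^{1/2}.
\]
Since $\mu_{x,x}$ and $\mu_{y,y}$ are positive scalar measures, ordinary Fubini applies, and combined with \eqref{eq:FubiniForPositiveMeasures} each factor equals $2\pi^2\DC_{\mu_{x,x}}(f)$ or $2\pi^2\DC_{\mu_{y,y}}(g)$ respectively. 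Dividing by $2\pi^2$ and using $\DC_{\mu_{x,x}}(f)\le \|f\|_{\DC_{\mu_{x,x}}^2}^2$ and similarly for $g$ yields the claimed bound $|\DC_{\mu_{x,y}}(f,g)|\le \|f\|_{\DC_{\mu_{x,x}}^2}\|g\|_{\DC_{\mu_{y,y}}^2}$, and in particular shows that the integrand of $\DC_{\mu_{x,y}}(f,g)$ is absolutely integrable with respect to the product of $|\mu_{x,y}|$ and area measure.

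With absolute integrability in hand, classical Fubini for the complex measure $\mu_{x,y}\otimes A$ on $\TB\times\DB$ lets me interchange the two integrals. Recalling the definition \eqref{eq:DefinitionOfLocalDirichletIntegral} of $\DC_\zeta$ applied in the obvious sesquilinear form $\DC_\zeta(f,g)$, this interchange immediately gives
\[
\DC_{\mu_{x,y}}(f,g)=\frac{1}{2\pi}\int_{\TB}\DC_\zeta(f,g)\diff\mu_{x,y}(\zeta).
\]
The main technical point to be careful with is the first step: Lemma~\ref{lemma:VersionOfCauchySchwarz} is stated for $|\mu_{x,y}|$ on the left but $\mu_{x,x},\mu_{y,y}$ on the right, which is precisely what is needed to pass from the signed (complex) situation to the positive one where \eqref{eq:FubiniForPositiveMeasures} is available. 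Everything else is a routine combination of Cauchy--Schwarz and Fubini.
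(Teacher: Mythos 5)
Your proof is correct and follows essentially the same route as the paper's: the paper also bounds the inner integral via Lemma~\ref{lemma:VersionOfCauchySchwarz} (phrased as $P_{|\mu_{x,y}|}(z)\le P_{\mu_{x,x}}(z)^{1/2}P_{\mu_{y,y}}(z)^{1/2}$, which is your pointwise estimate with the factors $|f'(z)|$, $|g'(z)|$ stripped out), and then invokes Cauchy--Schwarz on the outer integral and Fubini. You have merely made explicit the steps the paper leaves to the reader, and your constants check out.
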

\begin{proof}
	We need to show that
	\begin{align*}
	\int_{\DB}\int_{\TB}|f'(z)\conj{g'(z)}|\frac{1-|z|^2}{|\zeta-z|^2}\diff|\mu_{x,y}|(\zeta)\diff A(z)<\infty.
	\end{align*}
	The inner integral equals $|f'(z)\conj{g'(z)}|P_{|\mu_{x,y}|}(z)$.	By an application of Lemma~\ref{lemma:VersionOfCauchySchwarz}, we obtain
	\[
	P_{|\mu_{x,y}|}(z)\le P_{\mu_{x,x}}(z)^{1/2}P_{\mu_{y,y}}(z)^{1/2}.
	\]
	The remainder of the statement follows from the Cauchy--Schwarz inequality, and Fubini's theorem.
\end{proof}

\subsection{Analytic operators}\label{SubSec:AnalyticOperators}

An operator $T\in\LC$ is called \textit{analytic} if $\cap_{n\ge 0}T^n\HC=\{0\}$. It is clear that an analytic operator cannot have any non-zero eigenvalues. In particular, an analytic $m$-isometry does not have eigenvalues, since $\sigma_{ap}(T)\subseteq \TB$.

A good reason to study analytic $2$-isometries is the existence of a so-called \textit{Wold decomposition}: For $T\in\LC$, we let $\EC=\HC\ominus T\HC$. The dimension of $\EC$ is called the \textit{multiplicity} of $T$. Furthermore, define the spaces $\HC_u=\cap_{n\ge 0}T^n\HC$, and $\HC_a=\bigvee_{n\ge 0}T^n\EC$. The following is a special case of \cite[Theorem 3.6]{Shimorin2001:Wold-TypeDecompositionsAndWanderingSubspacesForOperatorsCloseToIsometries}:
\begin{thm}\label{thm:WoldDecomposition}
	If $T\in\LC$ is a $2$-isometry, then $\HC=\HC_u\oplus\HC_a$. Moreover, the spaces $\HC_u$ and $\HC_a$ are invariant under $T$, $T_u:=T|_{\HC_u}$ is unitary, and $T_a:=T|_{\HC_a}$ is analytic. 
\end{thm}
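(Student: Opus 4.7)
The plan is to verify the result via Wold-type steps. From $\beta_2(T)=0$ the polynomial identity $\|T^n x\|^2 = \|x\|^2 + n\langle Dx,x\rangle$ with $D:=T^*T-I$ (see \eqref{eq:PolynomialGrowthDiscreteTime}) holds, and non-negativity of the left-hand side for all $n$ forces $D\ge 0$; this gives $T$ bounded below and $\HC=\EC\oplus T\HC$. The subspace $\HC_u$ is trivially $T$-invariant, and injectivity of $T$ makes $T|_{\HC_u}$ surjective onto $\HC_u$: if $x\in\HC_u$ and $x=Tz$, then membership $x\in T^n\HC$ for every $n\ge 1$ forces $z\in T^{n-1}\HC$ for every $n$, so $z\in\HC_u$. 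To show that $T|_{\HC_u}$ is an isometry, I would write $x=T^n y_n$ with $y_n\in\HC_u$; the identity gives $\|x\|^2=\|y_n\|^2 + n\langle Dy_n,y_n\rangle$, forcing $\langle Dy_n,y_n\rangle\le\|x\|^2/n\to 0$, while $\|Tx\|^2-\|x\|^2=\langle Dy_n,y_n\rangle$ is independent of $n$, yielding $\|Tx\|=\|x\|$. Combined with surjectivity, this makes $T|_{\HC_u}$ unitary, so $\langle Dx,x\rangle=0$ on $\HC_u$, and positivity of $D$ gives $Dx=0$ there. Consequently, for $y\in\HC_u$ written as $y=Tz$ with $z\in\HC_u$, we have $T^*y=T^*Tz=(I+D)z=z\in\HC_u$, showing that $\HC_u$ reduces $T$.

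The orthogonality $\HC_u\perp\HC_a$ then follows cleanly from $D=0$ on $\HC_u$: for $x\in\HC_u$, $e\in\EC$, and $k\ge 0$, writing $x=T^k y_k$ with $y_k\in\HC_u$ yields
\begin{equation*}
\langle x,T^k e\rangle=\langle T^{*k}T^k y_k,e\rangle=\langle(I+kD)y_k,e\rangle=\langle y_k,e\rangle=0,
\end{equation*}
using $Dy_k=0$, $y_k\in\HC_u\subseteq T\HC$, and $\EC=(T\HC)^\perp$. Since $\HC_u$ reduces $T$, the restriction $T':=T|_{\HC_u^\perp}$ is a $2$-isometry on $\HC_u^\perp$, and $\bigcap_n (T')^n\HC_u^\perp\subseteq\HC_u\cap\HC_u^\perp=\{0\}$, making $T'$ analytic. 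From $T\HC_u=\HC_u$ we obtain $T\HC=\HC_u\oplus T'\HC_u^\perp$, which identifies $\EC=\HC_u^\perp\ominus T'\HC_u^\perp$ as the wandering subspace for $T'$, so in particular $\HC_a=\bigvee_n (T')^n\EC\subseteq\HC_u^\perp$.

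The main obstacle is the reverse inclusion $\HC_u^\perp\subseteq\HC_a$, equivalent to $\bigvee_n (T')^n\EC=\HC_u^\perp$ for the analytic $2$-isometry $T'$. This is the substantive content of Shimorin's wandering-subspace theorem, applicable because $2$-isometries satisfy the concavity hypothesis $\|(T')^2 v\|^2\le 2\|T'v\|^2-\|v\|^2$ (with equality). The strategy is to introduce the Cauchy dual $L:=T'(I+D)^{-1}$, whose adjoint $L^*=(I+D)^{-1}(T')^*$ is a contractive left-inverse of $T'$; an induction on $n$ then establishes $y-(T')^n L^{*n}y\in\bigvee_{k<n}(T')^k\EC\subseteq\HC_a$ for any $y\in\HC_u^\perp$. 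The delicate point is to show that $(T')^n L^{*n}y\to 0$ in norm, which combines analyticity of $T'$ with the uniform bound $\|L^{*n}y\|\le\|y\|$. Granting this, $y=\lim_n(y-(T')^n L^{*n}y)\in\HC_a$, so $\HC=\HC_u\oplus\HC_a$; together with unitarity of $T|_{\HC_u}$ and analyticity of $T'=T|_{\HC_a}$, this completes the proof.
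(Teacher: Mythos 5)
The paper offers no proof of this statement: it is quoted verbatim as a special case of Shimorin's Theorem~3.6, so there is no internal argument to compare against. The structural part of your proposal is correct and complete, and all the computations check out: $\beta_2(T)=0$ gives $\|T^nx\|^2=\|x\|^2+n\langle Dx,x\rangle$ with $D=T^*T-I$, hence $D\ge 0$ and $T$ bounded below; the surjectivity of $T|_{\HC_u}$, the limiting argument showing $\langle Dx,x\rangle=0$ on $\HC_u$, the reduction of $\HC_u$ via $T^*Tz=(I+D)z=z$, the orthogonality $\HC_u\perp\HC_a$ through $T^{*k}T^k=I+kD$, and the identification of $\EC=\HC_u^\perp\ominus T'\HC_u^\perp$ as the wandering subspace of the analytic $2$-isometry $T'=T|_{\HC_u^\perp}$ are all sound. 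The Cauchy dual set-up is also right: $L^*L=(I+D)^{-1}\le I$, $L^*T'=I$, and $y-(T')^nL^{*n}y\in\bigvee_{k<n}(T')^k\EC$ by the telescoping induction.

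The gap is exactly where you flag it, and it is the entire substance of the theorem: the claim that $(T')^nL^{*n}y\to 0$ ``combines analyticity of $T'$ with the uniform bound $\|L^{*n}y\|\le\|y\|$'' is not a valid deduction. Analyticity only says $\bigcap_n(T')^n\HC_u^\perp=\{0\}$, while $(T')^nL^{*n}$ is a non-self-adjoint idempotent onto $(T')^n\HC_u^\perp$ whose norm is a priori only $O(n^{1/2})$, since $\|(T')^nL^{*n}y\|^2=\|L^{*n}y\|^2+n\langle DL^{*n}y,L^{*n}y\rangle$ and contractivity of $L^*$ controls only the first term. Pushing a merely bounded sequence forward by $(T')^n$ need not produce something tending to zero just because the ranges shrink to $\{0\}$; what is needed is a uniform (or vanishing) bound on the second term $n\langle DL^{*n}y,L^{*n}y\rangle$, and obtaining it is the nontrivial core of Shimorin's wandering-subspace theorem (Richter's in the cyclic case), requiring further properties of the Cauchy dual of a concave operator. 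As written, your argument reduces the theorem correctly to its hard kernel and then assumes that kernel. Either cite Shimorin for the wandering-subspace property, as the paper does for the whole statement, or supply the missing estimate in full.
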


The class of analytic $2$-isometries with multiplicity $1$ can be completely described in terms of $M_z$, multiplication by the function $z\mapsto z$, acting on harmonically weighted Dirichlet spaces \cite{Richter1991:ARepresentationTheoremForCyclicAnalyticTwo-Isometries}. This result was later generalized to arbitrary multiplicity by Olofsson \cite{Olofsson2004:AVonNeumann-WoldDecompositionOfTwo-Isometries}:

\begin{thm}\label{thm:RichterOlofssonFunctionalModel}
	Let $\mu$ be an $\LC_+(\EC)$-valued measure. Then $M_z$ acts as an analytic $2$-isometry on the space $\DC_{\mu}^2(\EC)$. 
	
	Conversely, suppose that $T\in\LC(\HC)$ is an analytic $2$-isometry, and let $\EC=\HC\ominus T\HC$. Then there exists an $\LC_+(\EC)$-valued measure $\mu$, and a unitary map $V:\HC\to\DC_{\mu}^2(\EC)$, such that $T=V^*M_zV$. 
	
	The above correspondence is essentially one-to-one; the operators
	\[
	M_z:\DC_{\mu_j}^2(\EC_j)\to \DC_{\mu_j}^2(\EC_j),\quad j\in \{1,2\},
	\] 
	are unitarily equivalent if and only if there exists a unitary map $U:\EC_1\to\EC_2$ such that $\mu_1(E)=U^*\mu_2(E)U$ whenever $E\in\mathfrak{S}$.
\end{thm}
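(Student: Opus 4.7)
The plan is to treat the three assertions of Theorem~\ref{thm:RichterOlofssonFunctionalModel} in sequence: the forward direction is computational, the converse requires a genuine construction, and the uniqueness clause follows from intertwining considerations.

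For the forward direction, I would first show that $M_z$ is bounded on $\DC_\mu^2(\EC)$. Since $M_z$ is an isometry on $H^2(\EC)$ by \eqref{eq:NormOfBoundaryValues}, it suffices to bound $\DC_\mu(M_z f)$. Proposition~\ref{prop:DefectOperatorFormula} gives $\DC_\mu(M_z f) = \DC_\mu(f) + \frac{1}{2\pi}\int_\TB \langle d\mu\, f, f\rangle$, and the last term is estimated via \eqref{eq:DefinitionOfQuadraticIntegral} and the Cauchy--Schwarz inequality for the Fourier coefficients of $\mu$. For the $2$-isometric property, the identity $\beta_{2}(M_z) = M_z^* \beta_1(M_z) M_z - \beta_1(M_z)$ combined with Proposition~\ref{prop:DefectOperatorFormula} reduces the claim $\beta_2(M_z)=0$ to the equality $\frac{1}{2\pi}\int_\TB \langle d\mu\, zf, zf\rangle = \frac{1}{2\pi}\int_\TB \langle d\mu\, f, f\rangle$, which is immediate from \eqref{eq:DefinitionOfQuadraticIntegral} because the multiplication $f \mapsto zf$ shifts all Fourier coefficients by one while preserving the pairs $(k,l)$ with $l-k$ fixed. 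Analyticity follows from the inclusion $\DC_\mu^2(\EC) \hookrightarrow H^2(\EC)$ together with the standard analyticity of the shift on $H^2(\EC)$.

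For the converse, given an analytic $2$-isometry $T \in \LC(\HC)$, I would set $\EC = \HC \ominus T\HC$ and $D := \beta_1(T) = T^*T-I$, which is positive because \eqref{eq:PolynomialGrowthDiscreteTime} forces $\langle Dx,x\rangle \ge 0$ for $\|T^kx\|^2$ to remain nonnegative. Since $T$ is a $2$-isometry, \eqref{eq:BetaRecursion} gives $T^*DT = D$, and \eqref{eq:SumOfDefectOperators} yields the explicit formula $T^{*k}T^k = I + kD$. The central step is to construct an $\LC_+(\EC)$-valued measure $\mu$ on $\TB$ whose Fourier coefficients match the inner products $\langle T^m e, T^n e'\rangle$ for $e,e' \in \EC$; specifically, $\hat\mu(0)$ should equal $P_\EC D |_\EC$, while the off-diagonal coefficients $\hat\mu(n)$ are determined by $P_\EC T^{*|n|} D |_\EC$ or a similar expression. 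The existence of such a measure is not automatic: it requires invoking an operator-valued Herglotz-type representation theorem to produce a genuine $\LC_+$-valued measure from the formally positive harmonic operator-valued function on $\DB$ obtained from these coefficients. Once $\mu$ is in hand, I would define $V: \HC \to \DC_\mu^2(\EC)$ on the dense subspace $\bigvee_{n\ge 0} T^n \EC$ by $V(T^n e) = z^n e$ and extend linearly; the wandering subspace property $\HC = \bigvee_{n\ge 0} T^n \EC$ holds because $T$ is analytic and hence coincides with its analytic part $\HC_a$ in Theorem~\ref{thm:WoldDecomposition}. Isometricity of $V$ on polynomials follows by matching the inner products on both sides using the series representation \eqref{eq:DirichletIntegralSeriesRepresentation} and the explicit formulas for $\langle T^m e, T^n e'\rangle$. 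Surjectivity is then immediate from Proposition~\ref{prop:DensityOfPolynomials}, and the intertwining $VT = M_z V$ is built into the definition.

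For the uniqueness assertion, suppose $V_j : \HC \to \DC_{\mu_j}^2(\EC_j)$ are two such unitaries intertwining $T$ with $M_z$. Then $U := V_2 V_1^{-1}$ is a unitary from $\DC_{\mu_1}^2(\EC_1)$ onto $\DC_{\mu_2}^2(\EC_2)$ that commutes with $M_z$. In particular, $U$ maps the wandering subspace $\DC_{\mu_1}^2(\EC_1) \ominus M_z \DC_{\mu_1}^2(\EC_1)$, which is naturally identified with $\EC_1$ (the constant functions), onto the corresponding wandering subspace identified with $\EC_2$, yielding a unitary $U_0 : \EC_1 \to \EC_2$. Commutation with $M_z$ then forces $U(z^n e) = z^n U_0 e$ on a dense set of polynomials. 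Comparing the norms of $\sum_k z^k e_k$ in both spaces via \eqref{eq:DirichletIntegralSeriesRepresentation} forces the identity $\langle \hat\mu_1(n) e, e'\rangle = \langle \hat\mu_2(n) U_0 e, U_0 e'\rangle$ for every $n \in \ZB$ and $e,e' \in \EC_1$, and by uniqueness of the Fourier coefficients this yields $\mu_1(E) = U_0^* \mu_2(E) U_0$ for every $E \in \mathfrak{S}$.

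The main obstacle is the construction of the operator measure $\mu$ in the converse. In the scalar case (Richter) this already requires a classical Herglotz representation argument, and the extension to arbitrary multiplicity $\EC$ (Olofsson) demands the operator-valued version together with a verification that the formal coefficients derived from $T$ assemble into a genuinely positive operator-valued measure rather than just a coordinate-wise family of scalar measures. Everything else, including the intertwining and the uniqueness, is essentially bookkeeping once the measure is available.
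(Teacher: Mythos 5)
First, a point of context: the paper does not prove this theorem at all --- it is imported as a known result, with the multiplicity-one case attributed to Richter and the general case to Olofsson --- so there is no internal proof to compare yours against. Judged on its own merits, your outline follows the standard Richter--Olofsson strategy, but it contains two genuine gaps. The first is in the forward direction: Cauchy--Schwarz applied to the Fourier coefficients of $\mu$ cannot control $\frac{1}{2\pi}\int_{\TB}\left\langle \diff\mu\, f,f\right\rangle$ by $\|f\|_{H^2(\EC)}^2$, because $\hat\mu(n)$ need not decay. Already for $\EC=\CB$ and $\mu$ a point mass at $\zeta_0\in\TB$ the quantity is $|f(\zeta_0)|^2$, which is unbounded on the $H^2$ unit ball. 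The correct estimate is of the form
\[
\frac{1}{2\pi}\int_{\TB}\left\langle \diff\mu\, f,f\right\rangle \le \tfrac{1}{\pi}\|\mu(\TB)\|\,\|f\|_{H^2(\EC)}^2+2\,\DC_\mu(f),
\]
and it is obtained from the local Douglas formula (Theorem~\ref{thm:LocalDouglasFormula}): with $F(z)=\frac{f(z)-f(\zeta)}{z-\zeta}$ one has $f(\zeta)=f(0)+\zeta F(0)$, hence $\|f(\zeta)\|^2\le 2\|f\|_{H^2}^2+2\DC_\zeta(f)$, which is then integrated against $\mu$ via \eqref{eq:FubiniForPositiveMeasures}. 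The Dirichlet term on the right is essential; your stated route omits it and would not close.

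The second, more serious gap is the one you name yourself: the construction of the $\LC_+(\EC)$-valued measure. Your identification of the forced Fourier coefficients is right --- writing $D=T^*T-I\ge 0$, the computation $\left\langle T^me,T^ne'\right\rangle=\min(m,n)\left\langle DT^{m-n}e,e'\right\rangle$ for $m>n$ (using $T^{*n}T^n=I+nD$ and $T^{m-n}e\perp\EC$) pins down $\hat\mu(0)=P_\EC D|_\EC$ and $\hat\mu(-j)=P_\EC DT^{j}|_\EC$ --- but the existence of a genuine positive operator-valued measure with these coefficients is the entire content of the theorem. It rests on the identity $T^*DT=D$, which lets one realize these coefficients as matrix entries of a unitary (a Herglotz/Naimark-type argument), and without carrying that step out the converse direction is a roadmap rather than a proof. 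Your uniqueness argument via the wandering subspace $\DC_{\mu}^2(\EC)\ominus M_z\DC_{\mu}^2(\EC)\cong\EC$ is sound, though you should also record the trivial converse half of the ``if and only if'' (that $\mu_1=U^*\mu_2U$ gives unitary equivalence via $f\mapsto U\circ f$).
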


\section{$m$-isometric semigroups}\label{sec:m-IsometricSemigroups}

By an \textit{$m$-isometric semigroup} we mean a $C_0$-semigroup $(T_t)_{t\ge 0}$ such that each $T_t$ is an $m$-isometry. In the transition from individual operators to $C_0$-semigroups, the following two lemmas are useful:

\begin{lem}\label{lemma:DerivativeOfPolarizedOrbitMap}
	Let $(T_t)_{t\ge 0}$ be a $C_0$-semigroup with generator $A$, and $m\in\ZB_{\ge 0}$. If $y_1,y_2\in D(A^m)$, and $f_{y_1,y_2}:[0,\infty)\ni t\mapsto \left\langle T_ty_1,T_ty_2\right\rangle$, then $f_{y_1,y_2}$ is $m$ times differentiable, and its $m$th derivative $f_{y_1,y_2}^{(m)}$ is given by
	\begin{align}\label{eq:DerivativeOfPolarizedOrbitMap}
	f_{y_1,y_2}^{(m)}(t)=\alpha_m^A( T_ty_1,T_ty_2),\quad t\ge 0,
	\end{align}
	where $\alpha_m^A$ is defined by \eqref{eq:DefinitionOfAlphaM}. In particular, $f_{y_1,y_2}^{(m)}$ is continuous.
\end{lem}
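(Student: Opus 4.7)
The plan is to prove the formula by induction on $m$, with the base case $m=0$ being immediate: $\alpha_0^A(y_1,y_2)=\langle y_1,y_2\rangle$, so $f_{y_1,y_2}^{(0)}(t)=\langle T_ty_1,T_ty_2\rangle=\alpha_0^A(T_ty_1,T_ty_2)$, and continuity is just strong continuity of the semigroup.

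For the inductive step, suppose the formula holds at level $m$ and take $y_1,y_2\in D(A^{m+1})$. Two standard facts about $C_0$-semigroups will be used repeatedly: each $D(A^k)$ is invariant under $(T_t)_{t\ge 0}$, and $A^k T_t z=T_t A^k z$ for $z\in D(A^k)$. Applying these inside the inductive hypothesis, I would rewrite
\[
f_{y_1,y_2}^{(m)}(t)=\alpha_m^A(T_ty_1,T_ty_2)=\sum_{j=0}^{m}\binom{m}{j}\left\langle T_tA^jy_1,T_tA^{m-j}y_2\right\rangle,
\]
displaying $f_{y_1,y_2}^{(m)}$ as a finite sum of functions of the original form but with both arguments in $D(A)$ (this is exactly where $D(A^{m+1})$ is needed, so that $A^jy_1,A^{m-j}y_2\in D(A)$ for $0\le j\le m$).

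Next, for any $z_1,z_2\in D(A)$, the product rule together with $\frac{d}{dt}T_tz=AT_tz=T_tAz$ yields
\[
\frac{d}{dt}\left\langle T_tz_1,T_tz_2\right\rangle=\left\langle T_tAz_1,T_tz_2\right\rangle+\left\langle T_tz_1,T_tAz_2\right\rangle.
\]
Differentiating the finite sum above term by term, using this identity with $z_1=A^jy_1$ and $z_2=A^{m-j}y_2$, and then pulling the semigroups back through $A^{j+1},A^{m-j+1}$, gives
\[
\frac{d}{dt}f_{y_1,y_2}^{(m)}(t)=\alpha_m^A(AT_ty_1,T_ty_2)+\alpha_m^A(T_ty_1,AT_ty_2),
\]
which equals $\alpha_{m+1}^A(T_ty_1,T_ty_2)$ by the recursion \eqref{eq:AlphaRecursion}. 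This completes the induction; continuity of the resulting expression follows from the continuity of each map $t\mapsto T_tA^jy_i$ in the strong topology.

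The only real obstacle is the domain bookkeeping: verifying that every vector to which we apply $A$ or a power of $A$ actually lies in the appropriate $D(A^k)$, and that the order of the inductive step matches the hypothesis $y_1,y_2\in D(A^m)$. Once the invariance of $D(A^k)$ under $(T_t)_{t\ge 0}$ and the commutation $T_tA^k=A^kT_t$ on $D(A^k)$ are in hand, everything else is a straightforward application of the product rule and the recursion \eqref{eq:AlphaRecursion}.
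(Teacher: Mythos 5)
Your proof is correct and follows essentially the same route as the paper: induction on $m$, with the product rule applied to $\frac{d}{dt}\langle T_tz_1,T_tz_2\rangle$ for $z_1,z_2\in D(A)$ and the commutation $A^kT_t=T_tA^k$ on $D(A^k)$ handling the domain bookkeeping. The paper phrases the induction via the Leibniz-type formula $f_{y_1,y_2}^{(m)}=\sum_{k=0}^m\binom{m}{k}f_{A^ky_1,A^{m-k}y_2}$ rather than invoking \eqref{eq:AlphaRecursion} at each step, but these are the same argument.
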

\begin{proof}
	This is trivial for $m=0$. For $m=1$, if $y_1,y_2\in D(A)$, $t\ge 0$, then
	\[
	f_{y_1,y_2}'(t)=f_{Ay_1,y_2}(t)+f_{y_1,Ay_2}(t),
	\]
	by the product rule. By induction one obtains
	\[
	f_{y_1,y_2}^{(m)}(t)=\sum_{k=0}^m\binom{m}{k}f_{A^ky_1,A^{m-k}y_2}(t),
	\]
	provided that $y_1,y_2\in D(A^m)$. This is equivalent to \eqref{eq:DerivativeOfPolarizedOrbitMap}. 
\end{proof}

\begin{lem}\label{lemma:HigherOrderDifferenceQuotient}
	Let $f:[0,\infty)\to\CB$ be a function whose $m$th derivative $f^{(m)}$ is continuous. For $h>0$, it then holds that
	\begin{align*}
	\sum_{k=0}^m(-1)^{m-k}\binom{m}{k}f(kh)=\int_{s_1=0}^h\ldots \int_{s_m=0}^hf^{(m)}(s_1+\ldots+s_m)\diff s_1\ldots \diff s_m.
	\end{align*}
\end{lem}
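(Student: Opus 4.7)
I would prove this by induction on $m$, using Pascal's identity \eqref{eq:BinomialIdentity} on the sum side and the fundamental theorem of calculus on the integral side.

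For the base case $m = 1$, the identity reduces to $f(h) - f(0) = \int_0^h f'(s)\,ds$, which is simply the fundamental theorem of calculus (valid since $f'$ is continuous).

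For the inductive step, suppose the formula holds for some $m$, and let $f$ be a function whose $(m+1)$-th derivative is continuous. Define $g(t) = f(t+h) - f(t)$. Then $g^{(m)}$ is continuous, so the inductive hypothesis applies to $g$:
\begin{align*}
\sum_{k=0}^m (-1)^{m-k}\binom{m}{k}g(kh) = \int_0^h\cdots\int_0^h g^{(m)}(s_1+\cdots+s_m)\,ds_1\cdots ds_m.
\end{align*}
On the left-hand side, substituting $g(kh) = f((k+1)h) - f(kh)$, reindexing the first sum by $j = k+1$, and combining via Pascal's identity \eqref{eq:BinomialIdentity} yields
\begin{align*}
\sum_{k=0}^{m+1}(-1)^{m+1-k}\binom{m+1}{k}f(kh),
\end{align*}
which is the desired left-hand side at level $m+1$. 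On the right-hand side, since $g^{(m)}(t) = f^{(m)}(t+h) - f^{(m)}(t) = \int_0^h f^{(m+1)}(t+s_{m+1})\,ds_{m+1}$ by the fundamental theorem of calculus, Fubini's theorem lets us absorb the extra integral, giving the $(m+1)$-fold integral of $f^{(m+1)}$.

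This approach is essentially routine; there is no real obstacle. The only minor bookkeeping is the reindexing in the sum to apply \eqref{eq:BinomialIdentity}, and ensuring the continuity hypothesis on $f^{(m+1)}$ suffices to justify both the use of FTC and the application of Fubini's theorem (which is immediate since $f^{(m+1)}$ is bounded on any compact subinterval containing the integration region).
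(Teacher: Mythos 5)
Your proof is correct and follows exactly the route the paper indicates: its entire proof of this lemma is the one-line instruction ``Use the fundamental theorem of calculus, \eqref{eq:BinomialIdentity}, and induction over $m$,'' which is precisely the induction you carry out (with the difference operator $g(t)=f(t+h)-f(t)$ supplying both the Pascal reindexing on the left and the extra integration via FTC on the right). Your added remark about Fubini is harmless but not really needed, since the integrand is continuous on a compact box and the iterated integral is already in the required form.
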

\begin{proof}
	Use the fundamental theorem of calculus, \eqref{eq:BinomialIdentity}, and induction over $m$.
\end{proof}

As a first application of these identities, we prove a semigroup analogue of \eqref{eq:PolynomialGrowthDiscreteTime}. This is more precise than condition $(ii)$ of  \cite[Theorem~2.1]{Bermudez-Bonilla-Zaway2019:C0-SemigroupsOfM-IsometriesOnHilbertSpaces}:

\begin{thm}\label{thm:PolynomialGrowthContinuousTime}
	Let $(T_t)_{t\ge 0}$ be an $m$-isometric semigroup with generator $A$. Then the quadratic forms $(\alpha_j^A)_{j=0}^{m-1}$, densely defined by \eqref{eq:DefinitionOfAlphaM}, are bounded. Moreover,
	\begin{align*}
	\|T_tx\|^2=\sum_{j=0}^{m-1}\frac{t^j}{j!}\alpha_j^A(x),\quad x\in\HC, t\ge 0.
	\end{align*}
\end{thm}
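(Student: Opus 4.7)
The plan is to show first, on the dense invariant subspace $D(A^m)$, that the orbit squared-norm function $t\mapsto \|T_ty\|^2$ is a polynomial in $t$ of degree at most $m-1$, to identify its Taylor coefficients at $0$ via Lemma~\ref{lemma:DerivativeOfPolarizedOrbitMap}, and then to extract boundedness of the forms $\alpha_j^A$ by inverting a Vandermonde-type linear system. Density of $D(A^m)$ in $\HC$ then upgrades everything to $\HC$.

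For the polynomial structure, fix $y\in D(A^m)$ and set $f(t)=\|T_ty\|^2$. By Lemma~\ref{lemma:DerivativeOfPolarizedOrbitMap} applied with $y_1=y_2=y$, $f$ is $m$ times continuously differentiable with $f^{(j)}(0)=\alpha_j^A(y)$ for $0\le j\le m$. The semigroup identity $T_h^k=T_{kh}$ together with the $m$-isometric property of $T_h$ gives
\begin{align*}
\sum_{k=0}^m(-1)^{m-k}\binom{m}{k}f(kh)
=
\sum_{k=0}^m(-1)^{m-k}\binom{m}{k}\|T_h^ky\|^2
=
0,\quad h\ge 0.
\end{align*}
Lemma~\ref{lemma:HigherOrderDifferenceQuotient} then turns this into
\begin{align*}
\int_{[0,h]^m} f^{(m)}(s_1+\cdots+s_m)\,\diff s_1\cdots \diff s_m=0,\quad h>0.
\end{align*}
Dividing by $h^m$ and letting $h\to 0^+$, continuity of $f^{(m)}$ forces $f^{(m)}(0)=0$. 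Applying the same reasoning to the shifted orbit $r\mapsto \|T_r(T_sy)\|^2$, which is legitimate because $T_sy\in D(A^m)$ by invariance of $D(A^m)$ under the semigroup, yields $f^{(m)}(s)=0$ for every $s\ge 0$. Hence $f$ is a polynomial of degree at most $m-1$, and Taylor's formula combined with $f^{(j)}(0)=\alpha_j^A(y)$ gives
\begin{align*}
\|T_ty\|^2=\sum_{j=0}^{m-1}\frac{t^j}{j!}\alpha_j^A(y),\quad y\in D(A^m),\ t\ge 0.
\end{align*}

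For boundedness, I would evaluate the displayed identity at $m$ distinct points, e.g.\ $t_k=k$ for $k=0,\ldots,m-1$, producing a non-singular Vandermonde system whose inversion expresses each $\alpha_j^A(y)$ as an explicit linear combination of the quantities $\|T_{t_k}y\|^2$. Using the exponential bound $\|T_t\|_\LC\le Me^{wt}$ from \eqref{eq:ExponentialBoundedness}, these are each majorized by a constant multiple of $\|y\|^2$, so $|\alpha_j^A(y,y)|\le C_j\|y\|^2$ on $D(A^m)$. Polarization turns this into boundedness of the sesquilinear form $\alpha_j^A$ on $D(A^m)\times D(A^m)$, and since $D(A^m)$ is dense in $\HC$, each $\alpha_j^A$ extends uniquely to a bounded sesquilinear form on $\HC$. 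Both sides of the displayed identity are continuous in $y$, so the formula persists for all $x\in\HC$. The only mildly delicate step is passing from the Lemma~\ref{lemma:HigherOrderDifferenceQuotient} integral identity to the pointwise vanishing of $f^{(m)}$; once that is handled by the normalization $h\to 0^+$ and the semigroup-shift trick, the remainder is essentially routine bookkeeping.
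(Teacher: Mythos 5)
Your proposal is correct, and it deviates from the paper's argument only in how the polynomial identity is established on the dense domain; the boundedness step (evaluating at $m$ distinct times and inverting the resulting Vandermonde system, then polarizing and extending by density) is exactly the paper's. For the identity itself, the paper never differentiates $f$ to order $m$ and integrates back: instead it applies the discrete-time formula \eqref{eq:PolynomialGrowthDiscreteTime} to the $m$-isometry $T_{t/k}$, writes $\|T_{t/k}^k y\|^2=\sum_{j=0}^{m-1}\binom{k}{j}\left\langle \beta_j(T_{t/k})y,y\right\rangle$, and lets $k\to\infty$ using $\lim_{t\to 0^+}t^{-j}\left\langle \beta_j(T_t)y,y\right\rangle=\alpha_j^A(y)$, which follows from the same two lemmas you invoke. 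This only requires $y\in D(A^{m-1})$, whereas your route through $f^{(m)}\equiv 0$ and Taylor's formula needs $y\in D(A^m)$ --- immaterial, since both are dense and the extension argument is identical. Your version is arguably more transparent (it is the continuous-time statement ``$f^{(m)}=0$ hence $f$ is a polynomial'' made literal, with the semigroup-shift observation $T_sy\in D(A^m)$ correctly supplying the vanishing of $f^{(m)}$ away from $0$), while the paper's version recycles the discrete-time identity and so avoids having to propagate the vanishing of the top derivative along the orbit. Both are complete; I see no gap in yours.
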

\begin{proof}
	For $y\in D(A^j)$, let $f(t)=\|T_ty\|^2$. By Lemma~\ref{lemma:DerivativeOfPolarizedOrbitMap}, $f^{(j)}(t)= \alpha_j^A(T_ty)$. Moreover, this is a continuous function, and by Lemma~\ref{lemma:HigherOrderDifferenceQuotient},
	\begin{align}\label{eq:HigherOrderDefectAsIntegral}
	\left\langle \beta_{j}(T_{t})y,y\right\rangle =\int_{s_1=0}^{t}\ldots\int_{s_j=0}^{t} \alpha_j^A(T_{s_1+\ldots+s_j}y)\diff s_1\ldots \diff s_j.
	\end{align}	
	This identity implies
	\begin{align}\label{eq:HigherOrderDefectAsIntegral'}
	\lim_{t\to 0^+}\frac{1}{t^j}\left\langle \beta_{j}(T_{t})y,y\right\rangle= \alpha_{j}^A(y).
	\end{align}
	By \eqref{eq:PolynomialGrowthDiscreteTime}, it holds for any $k\in\ZB_{\ge 0}$ that
	\begin{align*}
	\|T_{t/k}^{k}y\|^2
	=
	\sum_{j=0}^{m-1}\binom{k}{j}\left\langle \beta_{j}(T_{t/k})y,y\right\rangle
	=
	\sum_{j=0}^{m-1}\left(\frac{t}{k}\right)^j\binom{k}{j}\frac{\left\langle \beta_{j}(T_{t/k})y,y\right\rangle}{\left(t/k\right)^j}.
	\end{align*}
	If $y\in D(A^{m-1})$, then we may let $k\to\infty$, in order to obtain
	\begin{align*}
	\|T_ty\|^2=\sum_{j=0}^{m-1}\frac{t^j}{j!}\alpha_j^A(y).
	\end{align*}	
	We want to extend the above identity to $y\in\HC$. For this it is sufficient to prove that each $\alpha_j^A$ is bounded. Given $m$ distinct times $(t_k)_{k=0}^{m-1}$, the numbers $(\|T_{t_k}y\|^2)_{k=0}^{m-1}$ uniquely determine $ \alpha_j^A(y)$. By linear algebra, it even holds that $ \alpha_j^A(y)=\sum_{k=0}^{m-1}a_{jk}\|T_{t_k}y\|^2$, where the numbers $(a_{jk})$ do not depend on $y$. It follows that each $ \alpha_j^A$ is a bounded quadratic form.
\end{proof}

Together with \eqref{eq:LaplaceTransformOfSemigroup}, Theorem~\ref{thm:PolynomialGrowthContinuousTime} implies the well-known result that if $A$ is the generator of an $m$-isometric semigroup, then $\sigma(A)\subseteq\{z\in \CB;\ \Re z\le 0\}$. A novel result is the following:

\begin{cor}\label{cor:NumericalRangeOfGenerator}
	Let $(T_t)_{t\ge 0}$ be an $m$-isometric semigroup with generator $A$. Then, there exists $a\in\RB$, $b\in (a,\infty)$, such that
	\[
	W(A)\subseteq \left\{z\in\CB;\,  a\le \Re z\le b \right\}.
	\]
\end{cor}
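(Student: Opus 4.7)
The proof is essentially an immediate consequence of the preceding theorem, and the corollary amounts to extracting the statement about $\alpha_1^A$.

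The plan is to identify $\Re\langle Ay,y\rangle$ with (half of) the bounded quadratic form $\alpha_1^A$. Unpacking the definition \eqref{eq:DefinitionOfAlphaM} with $m=1$ gives
\[
\alpha_1^A(y)=\langle Ay,y\rangle+\langle y,Ay\rangle=2\Re\langle Ay,y\rangle,\quad y\in D(A).
\]
(The same identity may alternatively be read off from Lemma~\ref{lemma:DerivativeOfPolarizedOrbitMap} with $m=1$, as $f_{y,y}'(0)=\alpha_1^A(y)$.)

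Next, I invoke Theorem~\ref{thm:PolynomialGrowthContinuousTime}: since $(T_t)_{t\ge 0}$ is $m$-isometric, the quadratic form $\alpha_1^A$, a priori only defined on the dense subspace $D(A)$, is bounded on $\HC$. Hence there exists $C\ge 0$ such that
\[
|\Re\langle Ay,y\rangle|=\tfrac{1}{2}|\alpha_1^A(y)|\le \tfrac{C}{2}\|y\|^2,\quad y\in D(A).
\]
Setting, for instance, $a=-C/2$ and $b=C/2+1$ (the $+1$ only to guarantee $b>a$ in the degenerate case $C=0$) gives
\[
W(A)\subseteq \{z\in\CB;\ a\le \Re z\le b\},
\]
as claimed.

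There is no real obstacle here; the content lies entirely in Theorem~\ref{thm:PolynomialGrowthContinuousTime}, which already establishes the boundedness of $\alpha_1^A$. The corollary is just the reformulation in terms of the numerical range, noting that $\alpha_1^A$ is (twice) the real-part symbol of $A$.
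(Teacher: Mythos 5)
Your proof is correct and follows exactly the paper's route: the paper's entire proof is the one-line observation that, by Theorem~\ref{thm:PolynomialGrowthContinuousTime}, $\alpha_1^A(y)=2\Re\left\langle Ay,y\right\rangle$ defines a bounded quadratic form on $\HC$. Your unpacking of the definition and the explicit choice of $a$ and $b$ just make the same argument slightly more verbose.
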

\begin{proof}
	By Theorem~\ref{thm:PolynomialGrowthContinuousTime}, $\alpha_1^A(y)=2\Re \left\langle Ay,y\right\rangle$ defines a bounded quadratic form on $\HC$.
\end{proof}

\begin{cor}\label{cor:QuasiContractive}
	Let $(T_t)_{t\ge 0}$ be an $m$-isometric semigroup. Then there exists $w\ge 0$ such that
	\begin{align*}
	\|T_t\|\le e^{wt},\quad t\ge 0,
	\end{align*}
	i.e. $(T_t)_{t\ge 0}$ is quasicontractive with parameter $w$.
\end{cor}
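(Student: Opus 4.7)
The plan is to derive this corollary as an essentially immediate consequence of Theorem~\ref{thm:PolynomialGrowthContinuousTime}, which already contains the essential quantitative content. That result expresses $\|T_tx\|^2$ as a polynomial in $t$ of degree at most $m-1$ whose coefficients are controlled by bounded quadratic forms in $x$, so the remaining task is purely scalar: dominate that polynomial by an exponential $e^{2wt}$, and take square roots.

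Concretely, first I would apply Theorem~\ref{thm:PolynomialGrowthContinuousTime} and set $C_j := \|\alpha_j^A\|<\infty$ for $j=0,\ldots,m-1$, noting that $C_0=1$ because $\alpha_0^A(x)=\|x\|^2$. This yields
\begin{align*}
\|T_tx\|^2 \le \left(\sum_{j=0}^{m-1}\frac{C_j}{j!}t^j\right)\|x\|^2,\quad x\in\HC,\ t\ge 0.
\end{align*}
Next I would choose $w\ge 0$ large enough that $(2w)^j\ge C_j$ for every $j=1,\ldots,m-1$; for example $2w := \max\bigl(1,\max_{1\le j\le m-1}C_j^{1/j}\bigr)$ works, and is trivially non-negative. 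With this choice, the scalar polynomial above is dominated term by term by the Taylor series of $e^{2wt}$, which gives $\|T_tx\|^2\le e^{2wt}\|x\|^2$. Taking suprema over $\|x\|=1$ and then square roots yields the desired estimate $\|T_t\|\le e^{wt}$.

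There is no real obstacle: Theorem~\ref{thm:PolynomialGrowthContinuousTime} does all the operator-theoretic work, and the polynomial-versus-exponential comparison is elementary. The only point worth noting is the freedom to enlarge $w$: since the bound only loosens as $w$ grows, we can always arrange $w\ge 0$, as required by the statement.
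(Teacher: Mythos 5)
Your proposal is correct and follows essentially the same route as the paper: both deduce the corollary from Theorem~\ref{thm:PolynomialGrowthContinuousTime}, note that the constant term is exactly $\|x\|^2$ since $\alpha_0^A=I$, and then dominate the resulting polynomial in $t$ by an exponential $e^{2wt}$. Your version merely spells out the elementary term-by-term comparison that the paper leaves as "clear".
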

\begin{proof}
	Since $\alpha_0^A=I$, Theorem~\ref{thm:PolynomialGrowthContinuousTime} yields that
	\[
	\|T_t\|^2\le 1+tp(t),
	\]
	for some polynomial $p$. It is clear that the right-hand side is dominated by $e^{wt}$ for some $w\ge 0$.
\end{proof}

\subsection{Proof of Theorem~\ref{thm:MainGen}}\label{subsec:ProofOfMainGen}

Suppose that $A$ is the generator of an $m$-isometric semigroup $(T_t)_{t\ge 0}$. Property $(i)$ holds for any generator. By Corollary~\ref{cor:QuasiContractive}, $(T_t)_{t\ge 0}$ is quasicontractive for some parameter $w\ge 0$. Properties $(ii)$ and $(iii)$ are implied by the Lumer--Phillips theorem (Theorem~\ref{thm:Lumer--Phillips}). Since each $T_t$ is $m$-isometric, \eqref{eq:HigherOrderDefectAsIntegral'} implies that $\alpha_m^A$ vanishes, i.e. $A$ is $m$-skew-symmetric. This proves that the conditions $(i)-(iv)$ are necessary.

Suppose on the other hand that $A$ satisfies $(i)-(iv)$. The first three conditions, together with the Lumer--Phillips theorem, imply that $A$ generates a $C_0$-semigroup $(T_t)_{t\ge 0}$, quasicontractive with parameter $w\ge 0$. The assumption that $A$ is $m$-skew-symmetric implies that each $T_t$ is $m$-isometric, by \eqref{eq:HigherOrderDefectAsIntegral}.

Finally, if $(i)-(iv)$ are fulfilled, so that $A$ generates an $m$-isometric $C_0$-semigroup $(T(t))_{t\ge 0}$, Theorem~\ref{thm:PolynomialGrowthContinuousTime} together with \eqref{eq:LaplaceTransformOfSemigroup} implies that $\lambda-A:D(A)\to\HC$ is invertible whenever $\lambda >0$. 

\subsection{Proof of Theorem~\ref{thm:MainCoGen}}
Let $(T_t)_{t\ge 0}$ be an $m$-isometric semigroup with generator $A$. By Theorem~\ref{thm:MainGen}, $A$ is $m$-skew-symmetric, and $(\lambda-A)^{-1}\in\LC$ for any $\lambda>0$. In particular, $1\in\rho(A)$. For $T=(A+I)(A-I)^{-1}$, Lemma~\ref{lemma:M-IsometryToM-SkewSymmetry} implies that $T$ is $m$-isometric, and that each $y\in D(A)$ can be written as $\frac{1}{2}(T-I)x$ for precisely one $x\in\HC$. Under this correspondence $Ay=\frac{1}{2}(T+I)x$. It follows that \eqref{eq:QCCondGen} holds if and only if
\[
\|Tx\|^2-\|x\|^2=\Re \left\langle (T+I)x,(T-I)x\right\rangle \le w \|(T-I)x\|^2,\quad x\in\HC,
\]
i.e. \eqref{eq:QCCondCoGen} holds.

\section{$2$-isometric cogenerators}\label{sec:2-isometricCogenerators}

The purpose of this section is to prove Theorem \ref{thm:MainConverse}, which essentially amounts to proving Theorem \ref{thm:MainMeasures}. 

We recall the hypothesis of Theorem~\ref{thm:MainConverse}: $T\in\LC$ is a $2$-isometry, $T-I$ is injective, and there exists $w\ge 0$ such that
\begin{align}\label{eq:QCCondCoGenRestate}
\left\langle \beta_1(T)x,x\right\rangle = \|Tx\|^2-\|x\|^2 \le w\|(T-I)x\|^2,\quad x\in \HC.
\end{align}
The assertion that we wish to prove is that $T$ is the cogenerator of a $C_0$-semigroup $(T_t)_{t\ge 0}$, and that this is quasicontractive with parameter $w$.

The first step is a reduction to the case of analytic operators. By the Wold-decomposition (Theorem \ref{thm:WoldDecomposition}), $T=T_u\oplus T_a$, where $T_u$ is unitary and $T_a$ is analytic. Since $1\notin \sigma_p(T)$, we know that $1\notin\sigma_p(T_u)$. By \cite[Chapter~III, Section~8]{Sz.-Nagy-Foias-Bercovici-Kerchy2010:HarmonicAnalysisOfOperatorsOnHilbertSpace}, $T_u$ is the cogenerator of a $C_0$-semigroup of unitary operators on $\HC_u$. This is clearly quasicontractive for any $w\ge 0$. It therefore suffices to show that $T_a$ is the cogenerator of a quasicontractive $C_0$-semigroup on $\HC_a$.

By orthogonality and invariance of the subspaces $\HC_u$ and $\HC_a$, and the fact that $T_u$ is unitary, Theorem \ref{thm:WoldDecomposition} further implies that \eqref{eq:QCCondCoGenRestate} is equivalent to
\[
\|T_ax_a\|^2-\|x_a\|^2\le w\|(T_u-I_u)x_u\|^2+w\|(T_a-I_a)x_a\|^2,\quad (x_u,x_a)\in\HC_u\oplus\HC_a.
\]
Consequently, $T$ satisfies \eqref{eq:QCCondCoGenRestate} if and only if $T_a$ does. Together with the previous paragraph, this reduces the proof of Theorem \ref{thm:MainConverse} to the case where $T$ is an analytic operator.

With the additional hypothesis that $T$ is analytic, let $\EC=\HC\ominus T\HC$. By Theorem \ref{thm:RichterOlofssonFunctionalModel}, $T=V^*M_zV$, where $V:\HC\to\DC_{\mu}^2(\EC)$ is a unitary map, and $\mu$ is some $\LC_+$-valued measure. It then holds that $\beta_1(T)=V^*\beta_1(M_z)V$, and \eqref{eq:QCCondCoGenRestate} becomes
\begin{align*}
\left\langle\beta_1(M_z)f,f\right\rangle_{\DC_{\mu}^2(\EC)} \le w\|(I-M_z)f\|_{\DC_{\mu}^2(\EC)}^2,\quad f\in \DC_{\mu}^2(\EC).
\end{align*}
By Proposition~\ref{prop:DensityOfPolynomials}, it is sufficient to verify this for $f\in\PC_a(\EC)$, and by Proposition \ref{prop:DefectOperatorFormula}, $T$ satisfies \eqref{eq:QCCondCoGenRestate} if and only if 
\begin{align*}
\frac{1}{2\pi}\int_{\TB}\left\langle \diff\mu\, f,f\right\rangle \le w\|(I-M_z)f\|_{\DC_{\mu}^2(\EC)}^2,\quad f\in \PC_a(\EC).
\end{align*}
If we for a moment assume the validity of Theorem \ref{thm:MainMeasures}, then the above condition implies that $(M_{\phi_t})_{t\ge 0}$ is a $C_0$-semigroup on $\DC_\mu^2(\EC)$, and that this is quasicontractive with parameter $w$. Let $(T_t)_{t\ge 0}=(V^*M_{\phi_t}V)_{t\ge 0}$. This defines a $C_0$-semigroup on $\HC$, it's quasicontractive with parameter $w$, and its cogenerator is given by $V^*M_zV=T$.

\subsection{Proof of Theorem \ref{thm:MainMeasures}}

Let us recall the statement of Theorem~\ref{thm:MainMeasures}: If $\mu$ is an $\LC_+$-valued measure on $\TB$, then the following are equivalent:
\begin{enumerate}[$(i)$]
	\item For every $t\ge 0$, $M_{\phi_t}\in\LC(\DC_\mu^2(\EC))$, and the family $(M_{\phi_t})_{t\ge0}$ is a $C_0$-semigroup.
	\item There exists $w_1\ge 0$ such that 
	\begin{align*}
	\frac{1}{2\pi}\int_{\TB}\left\langle \diff\mu\, f,f\right\rangle
	\le
	w_1
	\|(I-M_z)f\|_{\DC_\mu^2(\EC)}^2,\quad f\in\PC_a(\EC).
	\end{align*}
	\item The set function $\tilde \mu: E\mapsto \frac{1}{2\pi}\int_{E}\frac{\diff\mu(\zeta)}{|1-\zeta|^2}$ is an $\LC_+$-valued measure, and there exists $w_2\ge 0$ such that 
	\begin{align*}
	\frac{1}{2\pi}\int_{\TB}\left\langle \diff\tilde{\mu}\, f,f\right\rangle
	\le
	w_2
	\|f\|_{\DC_\mu^2(\EC)}^2,\quad f\in\PC_a(\EC).
	\end{align*}
\end{enumerate}
If one (hence all) of the above conditions is satisfied, then the $C_0$-semigroup $(M_{\phi_t})_{t\ge0}$ is $2$-isometric, has cogenerator $M_z$, and is quasicontractive for some parameter $w\ge 0$. Moreover, the optimal (smallest) values of $w$, $w_1$, and $w_2$ coincide.

In the case $\EC=\CB$, there is a quite direct proof that $(i)\Leftrightarrow(iii)$. In the general case, we essentially use the same ideas, although they are somewhat obscured by technicalities. For this reason, we begin with a preliminary discussion of the case $\EC=\CB$. In general, the qualitative assertions that $(M_{\phi_t})_{t\ge 0}$ is $2$-isometric and quasicontractive are fairly immediate from $(i)$. The argument also shows that $(i)\Rightarrow(ii)$. The implications $(ii)\Rightarrow(iii)\Rightarrow(i)$ require a bit more work. It will be evident that the optimal values of $w$, $w_1$, and $w_2$ coincide.

\begin{proof}[$\EC=\CB\,$\textnormal{:}]\label{p:ScalarCase}
	Note that $\phi_t:z\mapsto \exp(t(z+1)/(z-1))$ is inner, i.e. $\phi_t\in H^2$, and $|\phi_t(\zeta)|=1$ for $\lambda$-a.e. $\zeta\in\TB$. By \eqref{eq:LocalDirichletInner}, 
	\[
	\DC_{\zeta}(\phi_t)
	=
	\frac{2t}{|1-\zeta|^2},
	\]
	with the interpretation $\frac{1}{|1-1|^2}=\infty$. If we assume $(i)$, then $\DC_{\mu}(\phi_t)<\infty$, so $\frac{2t}{|1-\zeta|^2}$ is finite for $\mu$-a.e. $\zeta\in\TB$. In particular, $\mu(\{1\})=0$. On the other hand, if we assume $(iii)$, then again $\mu(\{1\})=0$. We may therefore add this condition to our hypothesis.
	
	By \eqref{eq:LocalDirichletFormula}
	\[
	\DC_{\zeta}(\phi_tf)
	=
	\DC_{\zeta}(f)+|f(\zeta)|^2\frac{2t}{|1-\zeta|^2},
	\]
	and this holds for $\mu$-a.e. $\zeta\in\TB$. Using Fubini's theorem \eqref{eq:FubiniForPositiveMeasures}, the above formula implies that
	\begin{align}\label{eq:11111}
	\DC_\mu(\phi_tf)=\DC_\mu(f)+\frac{t}{\pi}\int_{\TB}\frac{|f(\zeta)|^2}{|1-\zeta|^2}\diff\mu(\zeta).
	\end{align}
	Adding $\|\phi_t f\|_{H^2}^2=\|f\|_{H^2}^2$ to the above, we obtain
	\[
	\|\phi_t f\|_{\DC_\mu^2}^2
	=
	\|f\|_{\DC_\mu^2}^2+\frac{t}{\pi}\int_{\TB}\frac{|f(\zeta)|^2}{|1-\zeta|^2}\diff\mu(\zeta).
	\]
	From this, we conclude that $M_{\phi_t}:\DC_\mu^2\to\DC_\mu^2$ is bounded for all $t\ge0$ if and only if
	\[
	\frac{1}{\pi}\int_{\TB}\frac{|f(\zeta)|^2}{|1-\zeta|^2}\diff\mu(\zeta)\lesssim \|f\|_{\DC_\mu^2}^2,\quad f\in\PC_a.
	\]
	For the verification that, under such circumstances, $(M_{\phi_t})_{t\ge 0}$ is indeed a $C_0$-semigroup, we refer to the general case below.
\end{proof} 

\begin{rem}
	The above argument shows that for $\EC=\CB$, condition $(i)$ may be weakened to:
	\begin{enumerate}[$(i')$]
		\item There exists $t>0$ such that $M_{\phi_t}\in\LC(\DC_\mu^2)$.
	\end{enumerate}
	The author has not found a proof that the same phenomenon occurs in the general case.
\end{rem}

\begin{proof}[$(i)\Rightarrow(ii)\,$\textnormal{:}]
	Assume that $(M_{\phi_t})_{t\ge 0}\subset \LC(\DC_{\mu}^2(\EC))$ is a $C_0$-semigroup. A simple calculation shows that $M_z$ is the cogenerator of $(M_{\phi_t})_{t\ge 0}$.  $M_z$ is $2$-isometric, and by Lemma~\ref{lemma:M-IsometryToM-SkewSymmetry}, the corresponding generator is $2$-skew-symmetric. By Theorem~\ref{thm:MainGen}, $(M_{\phi_t})_{t\ge 0}$ is $2$-isometric. By Corollary~\ref{cor:QuasiContractive}, $(M_{\phi_t})_{t\ge 0}$ is quasicontractive for some $w\ge 0$. For any such $w$, Theorem~\ref{thm:MainCoGen} implies that
	\[
	\left\langle \beta_1(M_z)f,f\right\rangle_{\DC_{\mu}^2(\EC)} \le w\|(M_z-I)f\|_{\DC_{\mu}^2(\EC)}^2,\quad f\in \DC_{\mu}^2(\EC).
	\]
	In particular, the above inequality is satisfied for $f\in\PC_a(\EC)$. By Proposition~\ref{prop:DefectOperatorFormula}, condition $(ii)$ holds with $w_1=w$.
\end{proof}

\begin{proof}[$(ii)\Rightarrow(iii)\,$\textnormal{:}] 
	Consider a fixed $w_1\ge 0$, and assume that
	\[
	\frac{1}{2\pi}\int_{\TB}\left\langle \diff\mu\, f,f\right\rangle \le w_1\|(I-M_z)f\|_{\DC_\mu^2(\EC)}^2,\quad f\in \PC_a(\EC).
	\]
	For $f\in\DC_a(\EC)$, we let $f_N(z)=\sum_{k=0}^N\hat f(k)z^k$. Clearly, $\lim_{N\to\infty }\|f_N\|_{H^2(\EC)}^2=\|f\|_{H^2(\EC)}^2$. Applying the above inequality to $f_N\in\PC_a(\EC)$, and letting $N\to\infty$, the formulas \eqref{eq:DefinitionOfQuadraticIntegral} and \eqref{eq:DirichletIntegralSeriesRepresentation} imply
	\begin{align}\label{eq:WeakEmbeddingCondition}
	\frac{1}{2\pi}\int_{\TB}\left\langle \diff\mu\, f,f\right\rangle \le w_1\|(I-M_z)f\|_{\DC_\mu^2(\EC)}^2,\quad f\in \DC_a(\EC),
	\end{align}
	We will show that \eqref{eq:WeakEmbeddingCondition} implies
	\begin{align}\label{eq:StrongEmbeddingCondition}
	\frac{1}{2\pi}\int_{\TB}\left\langle \frac{\diff\mu(\zeta)}{|1-\zeta|^2}\, f(\zeta),f(\zeta)\right\rangle \le w_1\|f\|_{\DC_\mu^2(\EC)}^2,\quad f\in \PC_a(\EC).
	\end{align}
	In particular, if condition $(ii)$ holds for some $w_1$, then condition $(iii)$ holds with $w_2=w_1$.
	
	Let $f\in\PC_a(\EC)$. In order to prove \eqref{eq:StrongEmbeddingCondition}, we apply \eqref{eq:WeakEmbeddingCondition} to the function $z\mapsto \frac{f(z)}{1-rz}$, where $0<r<1$. If we let $k_r:z\mapsto\frac{1-z}{1-rz}$, then \eqref{eq:WeakEmbeddingCondition} implies
	\[
	\frac{1}{2\pi}\int_{\TB}\left\langle \diff\mu(\zeta)\, \frac{f(\zeta)}{1-r\zeta},\frac{f(\zeta)}{1-r\zeta}\right\rangle 
	\le w_1\|k_rf\|_{H^2(\EC)}^2+w_1\DC_{\mu}(k_rf).
	\]
	The proof is finished by verifying the three statements
	\begin{align}\label{eq:Claim1}
	\lim_{r\to 1^-}\|k_rf\|_{H^2(\EC)}^2=\|f\|_{H^2(\EC)}^2,
	\end{align}
	\begin{align}\label{eq:Claim2}
	\lim_{r\to 1^-}\frac{1}{2\pi}\int_{\TB}\left\langle \diff\mu(\zeta)\, \frac{f(\zeta)}{1-r\zeta},\frac{f(\zeta)}{1-r\zeta}\right\rangle=\frac{1}{2\pi}\int_{\TB}\left\langle \frac{\diff\mu(\zeta)}{|1-\zeta|^2}\, f(\zeta),f(\zeta)\right\rangle,
	\end{align}
	and
	\begin{align}\label{eq:Claim3}
	\lim_{r\to 1^-}\DC_{\mu}(k_rf)=\DC_{\mu}(f).
	\end{align}
	
	For any $z\in\clos{\DB}\setminus\{1\}$, it holds that $|k_r(z)|\le\frac{2}{1+r}$, and $\lim_{r\to 1^-}k_r(z)=1$. By \eqref{eq:NormOfBoundaryValues}, and dominated convergence, \eqref{eq:Claim1} holds for any $f\in H^2(\EC)$.
	
	For the verification of \eqref{eq:Claim2}, an important step is to show that $\frac{\diff\mu(\zeta)}{|1-\zeta|^2}$ does indeed define a measure, so that the right-hand side is well-defined. We begin with a lemma, based on the local Douglas formula:
	\begin{lem}\label{lemma:Convergence2}
		Let $f\in H^2$, and $\zeta\in\TB$. If $\DC_{\zeta}(f)<\infty$, then
		\begin{align*}
		\DC_{\zeta}(k_rf)\le \frac{8}{(1+r)^2}\DC_{\zeta}(f)+2\frac{1-r}{1+r}\frac{|f(\zeta)|^2}{|1-r\zeta|^2}.
		\end{align*}
		Moreover, if we let $F(z)=\frac{f(z)-f(\zeta)}{z-\zeta}$, then
		\begin{align*}
		\DC_{\zeta}(f-k_rf)
		&\le 
		2\left(\|F-k_rF\|_{H^2}^2+\frac{1-r}{1+r}\frac{|f(\zeta)|^2}{|1-r\zeta|^2}\right).
		\end{align*}
		In particular, if $\zeta\ne 1$, then $\lim_{r\to 1^-}\DC_{\zeta}(k_rf)=\DC_{\zeta}(f)$.
	\end{lem}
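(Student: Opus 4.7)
The natural approach is to reduce everything to $H^2$ norm computations via the local Douglas formula (Theorem~\ref{thm:LocalDouglasFormula}). The key algebraic identity to exploit is
\[
(k_r f)(z) - (k_r f)(\zeta) = k_r(z)\bigl(f(z)-f(\zeta)\bigr) + \bigl(k_r(z)-k_r(\zeta)\bigr)f(\zeta),
\]
combined with the explicit divided difference
\[
\frac{k_r(z)-k_r(\zeta)}{z-\zeta} = \frac{-(1-r)}{(1-rz)(1-r\zeta)},
\]
which is a direct algebraic verification (multiply out the common denominator and observe the numerator factors as $-(1-r)(z-\zeta)$).

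With these in hand, the plan for the first inequality is as follows. By Theorem~\ref{thm:LocalDouglasFormula}, $\DC_\zeta(k_r f) = \|G\|_{H^2}^2$, where
\[
G(z) = k_r(z) F(z) - \frac{(1-r)f(\zeta)}{(1-rz)(1-r\zeta)}.
\]
Applying $\|a+b\|^2 \le 2\|a\|^2 + 2\|b\|^2$, I would bound $\|k_r F\|_{H^2}^2 \le \|k_r\|_\infty^2 \|F\|_{H^2}^2$ using the elementary fact $\|k_r\|_\infty = 2/(1+r)$ (the maximum of $|1-e^{it}|/|1-re^{it}|$ occurs at $t=\pi$), and use $\|F\|_{H^2}^2 = \DC_\zeta(f)$. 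For the second term, the geometric series gives $\|(1-rz)^{-1}\|_{H^2}^2 = (1-r^2)^{-1}$, yielding
\[
\left\|\frac{(1-r)f(\zeta)}{(1-rz)(1-r\zeta)}\right\|_{H^2}^2 = \frac{1-r}{1+r}\frac{|f(\zeta)|^2}{|1-r\zeta|^2}.
\]
Assembling these pieces produces the first claimed estimate.

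For the second inequality, note that $h = f - k_r f$ satisfies $h(\zeta) = f(\zeta) - k_r(\zeta)f(\zeta)$, so a direct subtraction gives
\[
\frac{h(z)-h(\zeta)}{z-\zeta} = \bigl(F(z) - k_r(z)F(z)\bigr) + \frac{(1-r)f(\zeta)}{(1-rz)(1-r\zeta)},
\]
and the local Douglas formula combined with the same $\|a+b\|^2 \le 2\|a\|^2 + 2\|b\|^2$ inequality and the computation above delivers the asserted bound.

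For the final limit, I would argue that if $\zeta \neq 1$, then $|1-r\zeta|$ stays bounded away from zero as $r \to 1^-$, so the second summand in the first inequality tends to $0$. More importantly, returning to the explicit formula $G = k_r F - (1-r)f(\zeta)/((1-rz)(1-r\zeta))$, the second term tends to $0$ in $H^2$, while $k_r F \to F$ in $H^2$ by dominated convergence (since $|k_r| \le 2$ uniformly and $k_r \to 1$ almost everywhere on $\TB$). Thus $G \to F$ in $H^2$, giving $\DC_\zeta(k_r f) = \|G\|_{H^2}^2 \to \|F\|_{H^2}^2 = \DC_\zeta(f)$. The only subtle point is the dependence of all constants on $r$ and the exclusion of $\zeta = 1$; everything else is a matter of carefully bookkeeping the algebraic identities.
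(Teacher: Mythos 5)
Your proposal is correct and follows essentially the same route as the paper: the local Douglas formula applied to $k_rf$, the decomposition $k_r(z)F(z)-\frac{(1-r)f(\zeta)}{(1-rz)(1-r\zeta)}$ of the difference quotient, the bound $\|k_r\|_\infty=2/(1+r)$, and the geometric-series computation $\|(1-rz)^{-1}\|_{H^2}^2=(1-r^2)^{-1}$. The treatment of the final limit via $H^2$-convergence of the difference quotients is also in line with what the paper intends.
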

	\begin{proof}		
		Theorem~\ref{thm:LocalDouglasFormula} already asserts that $f(\zeta)$ exists, and $\DC_\zeta(f)=\|F\|_{H^2}^2$. Applying the same formula to $k_rf$,
		\begin{align*}
		\DC_{\zeta}(k_rf)
		&=
		\|\frac{k_r(z)f(z)-k_r(\zeta)f(\zeta)}{z-\zeta
		}\|_{H^2}^2
		\\
		&=
		\|k_r(z)\frac{f(z)-f(\zeta)}{z-\zeta}+\frac{k_r(z)-k_r(\zeta)}{z-\zeta}f(\zeta)\|_{H^2}^2
		\\
		&=
		\|k_r(z)F(z)-\frac{1-r}{(1-rz)(1-r\zeta)}f(\zeta)\|_{H^2}^2
		\\
		&\le 
		2\left(\|k_rF\|_{H^2}^2+\|\frac{1-r}{(1-rz)(1-r\zeta)}f(\zeta)\|_{H^2}^2\right)
		\\
		&\le 
		2\left(\frac{4}{(1+r)^2}\|F\|_{H^2}^2+(1-r)^2\frac{|f(\zeta)|^2}{|1-r\zeta|^2}\|\frac{1}{1-rz}\|_{H^2}^2\right)
		\\
		&=
		\frac{8}{(1+r)^2}\DC_{\zeta}(f)+2\frac{1-r}{1+r}\frac{|f(\zeta)|^2}{|1-r\zeta|^2}.
		\end{align*}
		In the last step, we have used geometric summation to compute $\|\frac{1}{1-rz}\|_{H^2}^2=
		\frac{1}{1-r^2}$. This proves the first inequality of the statement. The second inequality follows from a similar calculation.
	\end{proof}
	
	The next lemma essentially states that if \eqref{eq:WeakEmbeddingCondition} holds, then a slightly weaker version of \eqref{eq:StrongEmbeddingCondition} holds for any $f$ with values in a one-dimensional subspace.
	
	\begin{lem}\label{lemma:WeakEmbedding}
		Let $\mu$ be an $\LC_+$-valued measure that satisfies \eqref{eq:WeakEmbeddingCondition}. Then
		\[
		\frac{1}{2\pi}\int_{\TB}\frac{|f(\zeta)|^2}{|1-\zeta|^2}\diff\mu_{x,x}(\zeta)\le w_1\left(\|f\|_{H^2}^2\|x\|^2+2\DC_{\mu_{x,x}}(f)\right),\quad x\in\EC,\ f\in\PC_a.
		\]
		In particular, the set function $E\mapsto\int_{E}\frac{\diff \mu(\zeta)}{|1-\zeta|^2}$ is an $\LC_+$-valued measure, and $\mu(\{1\})=0$. 
	\end{lem}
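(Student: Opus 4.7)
The plan is to test the hypothesis \eqref{eq:WeakEmbeddingCondition} against the auxiliary $\EC$-valued function
\[
g(z) = \frac{f(z)}{1-rz}\,x,\qquad 0<r<1,
\]
for fixed $f\in\PC_a$ and $x\in\EC$. Since $g\in\DC_a(\EC)$ and $(I-M_z)g(z)=k_r(z)f(z)\,x$, and since for a constant $x$ we have $\DC_\mu(hx)=\DC_{\mu_{x,x}}(h)$ (via $\langle P_\mu(z)x,x\rangle=P_{\mu_{x,x}}(z)$), the left-hand side reduces to
\[
I(r):=\frac{1}{2\pi}\int_{\TB}\frac{|f(\zeta)|^{2}}{|1-r\zeta|^{2}}\,\diff\mu_{x,x}(\zeta),
\]
and the inequality becomes $I(r)\le w_1\|x\|^{2}\|k_rf\|_{H^2}^{2}+w_1\DC_{\mu_{x,x}}(k_rf)$. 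Because $f$ is a polynomial and $r<1$, $I(r)$ is manifestly finite.

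Next, I apply Lemma~\ref{lemma:Convergence2} pointwise in $\zeta$, integrate against $\diff\mu_{x,x}$, and use the positive-measure Fubini \eqref{eq:FubiniForPositiveMeasures}, which gives
\[
\DC_{\mu_{x,x}}(k_rf)\le \frac{8}{(1+r)^{2}}\,\DC_{\mu_{x,x}}(f)+\frac{2(1-r)}{1+r}\,I(r).
\]
Substituting this back into the previous inequality produces a self-referential estimate for $I(r)$, which is the main (but mild) obstacle. Since $I(r)<\infty$, I may absorb the $I(r)$-term on the right for $r$ close enough to $1$, obtaining
\[
\Bigl(1-\tfrac{2w_1(1-r)}{1+r}\Bigr)I(r)\le w_1\|x\|^{2}\|k_rf\|_{H^2}^{2}+\tfrac{8w_1}{(1+r)^{2}}\DC_{\mu_{x,x}}(f).
\]

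Letting $r\to 1^-$, the coefficient on the left tends to $1$, $\|k_rf\|_{H^2}^{2}\to\|f\|_{H^2}^{2}$ (the scalar version of \eqref{eq:Claim1}, by dominated convergence), and $8/(1+r)^{2}\to 2$. Since $|f(\zeta)|^{2}/|1-r\zeta|^{2}\to |f(\zeta)|^{2}/|1-\zeta|^{2}$ pointwise on $\TB\setminus\{1\}$ (and is trivially lower-bounded there), Fatou's lemma yields
\[
\frac{1}{2\pi}\int_{\TB}\frac{|f(\zeta)|^{2}}{|1-\zeta|^{2}}\,\diff\mu_{x,x}(\zeta)\le \liminf_{r\to 1^{-}}I(r)\le w_1\bigl(\|f\|_{H^2}^{2}\|x\|^{2}+2\DC_{\mu_{x,x}}(f)\bigr),
\]
which is the stated inequality. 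Specializing to $f\equiv 1$ gives $\int_{\TB}|1-\zeta|^{-2}\,\diff\mu_{x,x}(\zeta)\le 2\pi w_1\|x\|^{2}$ for every $x\in\EC$, so the construction from Section~\ref{subsec:OperatorMeasures} with $h(\zeta)=|1-\zeta|^{-2}$ furnishes the $\LC_{+}$-valued measure $E\mapsto\int_{E}|1-\zeta|^{-2}\,\diff\mu(\zeta)$; the same finiteness forces $\mu_{x,x}(\{1\})=0$ for all $x$, hence $\mu(\{1\})=0$.
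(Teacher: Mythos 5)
Your proposal is correct and follows essentially the same route as the paper: test \eqref{eq:WeakEmbeddingCondition} on $z\mapsto\frac{f(z)}{1-rz}x$, control $\DC_{\mu_{x,x}}(k_rf)$ via Lemma~\ref{lemma:Convergence2} and \eqref{eq:FubiniForPositiveMeasures}, absorb the self-referential term using finiteness of $I(r)$, and pass to the limit by Fatou. The only cosmetic difference is the order of operations: the paper first runs the $f\equiv 1$ case to secure $\mu(\{1\})=0$ and thereby justify the $\mu_{x,x}$-a.e.\ pointwise convergence needed in Fatou for general $f$, whereas you apply Fatou on $\TB\setminus\{1\}$ for general $f$ and extract $\mu(\{1\})=0$ afterwards; for $f\equiv 1$ the convergence to $+\infty$ at $\zeta=1$ makes this legitimate, so the argument closes.
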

	\begin{proof}
		Applying \eqref{eq:WeakEmbeddingCondition} to the function $z\mapsto \frac{f(z)}{1-rz}x$ yields
		\begin{align}\label{eq:Auxiliary}
		\frac{1}{2\pi}\int_{\TB} \frac{|f(\zeta)|^2}{|1-r\zeta|^2} \diff\mu_{x,x}(\zeta)\le w_1\left(\|k_rf\|_{H^2}^2\|x\|^2+\DC_{\mu_{x,x}}(k_rf)\right).
		\end{align}
		Using Lemma~\ref{lemma:Convergence2}, together with Fubini's theorem \eqref{eq:FubiniForPositiveMeasures},
		\[
		\DC_{\mu_{x,x}}(k_rf)\le \frac{8}{(1+r)^2}\DC_{\mu_{x,x}}(f)+\frac{1}{\pi}\frac{1-r}{1+r}\int_\TB \frac{|f(\zeta)|^2}{|1-r\zeta|^2}\diff\mu_{x,x}(\zeta).
		\]
		Thus, subtracting $\frac {w_1}{\pi}\frac{1-r}{1+r}\int_\TB \frac{|f(\zeta)|^2}{|1-r\zeta|^2}\diff\mu_{x,x}(\zeta)$ from both sides of \eqref{eq:Auxiliary} yields
		\begin{multline*}
		\left(\frac{1}{2\pi}-\frac {w_1}{\pi}\frac{1-r}{1+r}\right)\int_{\TB} \frac{|f(\zeta)|^2}{|1-r\zeta|^2} \diff\mu_{x,x}(\zeta)
		\\
		\le w_1\left(\|k_rf\|_{H^2}^2\|x\|^2+\frac{8}{(1+r)^2}\DC_{\mu_{x,x}}(f)\right).
		\end{multline*}
		A formal application of Fatou's lemma yields
		\begin{align*}
		\frac{1}{2\pi}\int_{\TB}\frac{|f(\zeta)|^2}{|1-\zeta|^2}\diff\mu_{x,x}(\zeta)
		&=
		\frac{1}{2\pi}\int_{\TB}\liminf_{r\to 1^-}\frac{|f(\zeta)|^2}{|1-r\zeta|^2}\diff\mu_{x,x}(\zeta)
		\\
		&\le
		\liminf_{r\to 1^-}
		\frac{1}{2\pi}\int_{\TB}\frac{|f(\zeta)|^2}{|1-r\zeta|^2}\diff\mu_{x,x}(\zeta)
		\\
		&=
		\liminf_{r\to 1^-}\left(\frac{1}{2\pi}-\frac {w_1}{\pi}\frac{1-r}{1+r}\right)\int_{\TB} \frac{|f(\zeta)|^2}{|1-r\zeta|^2} \diff\mu_{x,x}(\zeta)
		\\
		&\le
		\liminf_{r\to 1^-}w_1\left(\|k_rf\|_{H^2}^2\|x\|^2+\frac{8}{(1+r)^2}\DC_{\mu_{x,x}}(f)\right)
		\\
		&=
		w_1\left(\|f\|_{H^2}^2\|x\|^2+2\DC_{\mu_{x,x}}(f)\right).
		\end{align*}
		In order to justify this, we need to argue that $\frac{|f(\zeta)|^2}{|1-\zeta|^2}=\lim_{r\to 1^-}\frac{|f(\zeta)|^2}{|1-r\zeta|^2}$ for $\mu_{x,x}$-a.e. $\zeta\in\TB$. For the constant function $f\equiv 1$ we have convergence for every $\zeta$, and we obtain
		\[
		\frac{1}{2\pi}\int_{\TB}\frac{1}{|1-\zeta|^2}\diff\mu_{x,x}(\zeta)\le w_1\|x\|^2,\quad x\in\EC.
		\]
		This implies that $\frac{1}{|1-\zeta|^2}$ is finite for $\mu_{x,x}$-a.e. $\zeta\in\TB$. As this holds for every $x\in\EC$, $\mu(\{1\})=0$. This justifies the above argument for any $f\in\PC_a(\EC)$.
		
		As was noted in Subsection \ref{subsec:OperatorMeasures}, the above inequality also implies that $E\mapsto\int_{E}\frac{\diff \mu(\zeta)}{|1-\zeta|^2}$ is an $\LC_+$-valued measure. 
	\end{proof}
	
	By hypothesis \eqref{eq:WeakEmbeddingCondition} holds, so Lemma~\ref{lemma:WeakEmbedding} allows us to assume that $\mu(\{1\})=0$. 
	
	We now prove \eqref{eq:Claim2}. By Lemma~\ref{lemma:WeakEmbedding}, the right-hand side is well-defined. For $f\in\PC_a(\EC)$, let $\{e_n\}\subset\EC$ be an orthonormal basis of a finite-dimensional subspace containing the range of $f$. Then $f=\sum_{n}f_ne_n$, where each $f_n\in\PC_a$. Defining $\mu_{m,n}=\mu_{e_m,e_n}$, a calculation shows that
	\[
	\int_{\TB}\left\langle\diff\mu(\zeta)\,  \frac{f(\zeta)}{1-r\zeta},\frac{f(\zeta)}{1-r\zeta}\right\rangle
	=
	\sum_{m,n}\int_{\TB} \frac{f_m(\zeta)\conj{f_n(\zeta)}}{|1-r\zeta|^2}\diff\mu_{m,n}(\zeta).
	\]
	For $|\mu_{m,n}|$-a.e. $\zeta\in\TB$, 
	\[
	\lim_{r\to 1^-}\frac{f_m(\zeta)\conj{f_n(\zeta)}}{|1-r\zeta|^2}
	=
	\frac{f_m(\zeta)\conj{f_n(\zeta)}}{|1-\zeta|^2}.
	\]
	Note that
	\[
	\frac{|f_m(\zeta)\conj{f_n(\zeta)}|}{|1-r\zeta|^2}
	\le 
	4\frac{|f_m(\zeta)\conj{f_n(\zeta)}|}{|1-\zeta|^2}.
	\]
	By Lemma \ref{lemma:VersionOfCauchySchwarz}, and Lemma \ref{lemma:WeakEmbedding}, the above right-hand side is $|\mu_{m,n}|$-integrable. Hence, by dominated convergence,
	\begin{multline*}
	\int_{\TB}\left\langle\diff\mu(\zeta)\,  \frac{f(\zeta)}{1-r\zeta},\frac{f(\zeta)}{1-r\zeta}\right\rangle
	=
	\sum_{m,n}\int_{\TB} \frac{f_m(\zeta)\conj{f_n(\zeta)}}{|1-r\zeta|^2}\diff\mu_{m,n}(\zeta)
	\\
	\to
	\sum_{m,n}\int_{\TB} \frac{f_m(\zeta)\conj{f_n(\zeta)}}{|1-\zeta|^2}\diff\mu_{m,n}(\zeta)
	=
	\int_{\TB}\left\langle \frac{\diff\mu(\zeta)}{|1-\zeta|^2}\, f(\zeta),f(\zeta)\right\rangle.
	\end{multline*}
	This proves that if $f\in\PC_a(\EC)$, and $\mu$ satisfies \eqref{eq:WeakEmbeddingCondition}, then \eqref{eq:Claim2} holds.
	
	The proof of \eqref{eq:Claim3} is similar. Reusing the above notation,
	\[
	\DC_{\mu}(k_rf)=\sum_{m,n}\DC_{\mu_{m,n}}(k_rf_m,k_rf_n).
	\]
	By Lemma \ref{lemma:FubiniForDirichletIntegrals}, each one of these Dirichlet integrals can be computed as
	\begin{align*}
	\DC_{\mu_{m,n}}(k_rf_m,k_rf_n)=\frac{1}{2\pi}\int_{\TB}\DC_\zeta(k_rf_m,k_rf_n)\diff\mu_{m,n}(\zeta).
	\end{align*}
	By polarization, and Lemma \ref{lemma:Convergence2}, 
	\begin{multline*}
	\DC_\zeta(k_rf_m,k_rf_n)=\frac{1}{4}\sum_{\sigma^4=1}\sigma\DC_\zeta\left(k_r(f_m+\sigma f_n)\right)
	\\
	\to 
	\frac{1}{4}\sum_{\sigma^4=1}\sigma\DC_\zeta\left(f_m+\sigma f_n\right)=\DC_\zeta(f_m,f_n),
	\end{multline*}
	for $|\mu_{m,n}|$-a.e $\zeta\in\TB$. Moreover, 
	\begin{multline*}
	|\DC_{\zeta}(k_rf_m,k_rf_n)|
	\le \DC_{\zeta}(k_rf_m)^{1/2}\DC_{\zeta}(k_rf_n)^{1/2}
	\\
	\le 
	\left(8\DC_{\zeta}(f_m)+4\frac{|f_m(\zeta)|^2}{|1-\zeta|^2}\right)^{1/2}
	\left(8\DC_{\zeta}(f_n)+4\frac{|f_n(\zeta)|^2}{|1-\zeta|^2}\right)^{1/2}.
	\end{multline*}
	By Lemma \ref{lemma:VersionOfCauchySchwarz}, and Lemma \ref{lemma:WeakEmbedding}, the right-hand side is $|\mu_{m,n}|$-integrable. By dominated convergence,
	\begin{multline*}
	\DC_{\mu}(k_rf)
	=
	\sum_{m,n}\frac{1}{2\pi}\int_{\TB}\DC_\zeta(k_rf_m,k_rf_n)\diff\mu_{m,n}(\zeta)
	\\
	\to
	\sum_{m,n}\frac{1}{2\pi}\int_{\TB}\DC_\zeta(f_m,f_n)\diff\mu_{m,n}(\zeta)
	=
	\DC_{\mu}(f).
	\end{multline*}
	This shows that \eqref{eq:Claim3} holds whenever $f\in\PC_a(\EC)$, and $\mu$ satisfies \eqref{eq:WeakEmbeddingCondition}. The proof that $(ii)\Rightarrow(iii)$ is complete.
\end{proof}	

\begin{proof}[$(iii)\Rightarrow(i)\,$\textnormal{:}]
	
	We are assuming the existence of $w_2\ge 0$, such that
	\begin{align}\label{eq:StrongEmbeddingCondition2}
	\frac{1}{2\pi}\int_{\TB}\left\langle \frac{\diff\mu(\zeta)}{|1-\zeta|^2}\, f(\zeta),f(\zeta)\right\rangle \le w_2\|f\|_{\DC_\mu^2(\EC)}^2,\quad f\in \PC_a(\EC).
	\end{align}
	Recall that $\phi_t:z\mapsto \exp\left(t(z+1)/(z-1)\right)$. The core of our proof is the following formula:
	\begin{lem}\label{lemma:MultiplicationFormula}
		Let $\mu$ be an $\LC_+$-valued measure that satisfies \eqref{eq:StrongEmbeddingCondition2}. If $f\in \PC_a(\EC)$, and $t> 0$, then
		\begin{align*}
		\|\phi_tf\|_{\DC_{\mu}^2(\EC)}^2=\|f\|_{\DC_{\mu}^2(\EC)}^2+\frac{t}{\pi}\int_{\TB}\left\langle \frac{ \diff\mu(\zeta)}{|1-\zeta|^2}\, f(\zeta),f(\zeta)\right\rangle.
		\end{align*}
	\end{lem}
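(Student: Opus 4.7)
The plan is to reduce the operator-valued identity to the scalar case already verified on page~\pageref{p:ScalarCase}, where formula \eqref{eq:11111} was obtained from the local Douglas formula \eqref{eq:LocalDirichletFormula} combined with the scalar Fubini identity \eqref{eq:FubiniForPositiveMeasures}. Since $f\in\PC_a(\EC)$ takes values in a finite-dimensional subspace $\EC_0\subseteq\EC$, I would fix an orthonormal basis $(e_n)$ of $\EC_0$ and decompose $f=\sum_n f_ne_n$ with scalar polynomials $f_n\in\PC_a$. Writing $\mu_{m,n}:=\mu_{e_m,e_n}$, both sides of the claimed identity become finite double sums of expressions involving the complex measures $\mu_{m,n}$, and the task reduces to establishing a sesquilinear analogue of \eqref{eq:11111} for each pair $(f_m,f_n)$.

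Concretely, since $\phi_t$ is inner, \eqref{eq:NormOfBoundaryValues} gives $\|\phi_tf\|_{H^2(\EC)}^2=\|f\|_{H^2(\EC)}^2$, so only the Dirichlet contribution needs attention. Expand
\[
\DC_\mu(\phi_tf)=\sum_{m,n}\DC_{\mu_{m,n}}(\phi_tf_m,\phi_tf_n).
\]
For each diagonal positive measure $\mu_{n,n}$, the first task is to check that $\phi_tf_n\in\DC_{\mu_{n,n}}^2$; using \eqref{eq:FubiniForPositiveMeasures} and \eqref{eq:LocalDirichletFormula} this amounts to the finiteness of $\int_\TB\frac{|f_n(\zeta)|^2}{|1-\zeta|^2}\diff\mu_{n,n}(\zeta)$, which follows from boundedness of $f_n$ on $\TB$ together with the fact, guaranteed by hypothesis $(iii)$, that $\int_\TB\frac{\diff\mu_{n,n}(\zeta)}{|1-\zeta|^2}<\infty$. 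This legitimizes Lemma~\ref{lemma:FubiniForDirichletIntegrals} for each pair $(m,n)$, producing
\[
\DC_{\mu_{m,n}}(\phi_tf_m,\phi_tf_n)=\frac{1}{2\pi}\int_\TB\DC_\zeta(\phi_tf_m,\phi_tf_n)\diff\mu_{m,n}(\zeta).
\]
The polarized version of \eqref{eq:LocalDirichletFormula} then gives $\DC_\zeta(\phi_tf_m,\phi_tf_n)=\DC_\zeta(f_m,f_n)+f_m(\zeta)\overline{f_n(\zeta)}\DC_\zeta(\phi_t)$ for $|\mu_{m,n}|$-a.e.\ $\zeta$, and substituting $\DC_\zeta(\phi_t)=\frac{2t}{|1-\zeta|^2}$ from \eqref{eq:LocalDirichletInner} followed by resumming over $m,n$ produces exactly the claimed identity.

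The principal technical obstacle is ensuring that these Fubini exchanges are legitimate and that the pointwise local Douglas identity is valid on sets of full $|\mu_{m,n}|$-measure. Both are ultimately controlled by the assumption that $\tilde\mu$ is $\LC_+$-valued, which in particular forces $\mu(\{1\})=0$ and provides integrable control of $1/|1-\zeta|^2$ against each $\mu_{n,n}$; a Cauchy--Schwarz application through Lemma~\ref{lemma:VersionOfCauchySchwarz} then propagates this control to the off-diagonal measures $|\mu_{m,n}|$, rendering the sums and integrals absolutely convergent. Once these measurability and integrability points are dispatched, polarization and the finite resummation assemble the scalar identities into the operator-valued formula.
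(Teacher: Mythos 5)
Your proposal is correct and follows essentially the same route as the paper: decompose $f$ over a finite orthonormal basis, apply the scalar identity \eqref{eq:11111} (equivalently the local formulas \eqref{eq:LocalDirichletFormula} and \eqref{eq:LocalDirichletInner}) to the diagonal measures $\mu_{n,n}$, use \eqref{eq:StrongEmbeddingCondition2} to secure the needed integrability, and handle the off-diagonal terms by Cauchy--Schwarz before resumming. If anything, your explicit use of Lemma~\ref{lemma:FubiniForDirichletIntegrals} together with the polarized local Douglas formula spells out the sesquilinear identity for the pairs $(f_m,f_n)$ that the paper's ``summing over $m$ and $n$'' leaves implicit.
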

	
	\begin{proof}
		Reusing some previous notation, $f=\sum f_ne_n$, where each $f_n\in\PC_a$, and $\{e_n\}$ is a finite orthonormal set. Moreover, $\mu_{m,n}=\mu_{e_m,e_n}$. From our discussion of the case $\EC=\CB$, in particular \eqref{eq:11111}, we know that
		\[
		\DC_{\mu_{n,n}}(\phi_tf_n)=\DC_{\mu_{n,n}}(f_n)+\frac{t}{\pi}\int_\TB \frac{|f_n(\zeta)|^2 }{|1-\zeta|^2}\diff\mu_{n,n}(\zeta).
		\]
		Applying \eqref{eq:StrongEmbeddingCondition2} to the function $f_ne_n$, the last integral is finite, so $\phi_tf_n\in\DC_{\mu_{n,n}}^2$, or equivalently, $\phi_t f_ne_n\in\DC_{\mu}^2(\EC)$.	By the Cauchy--Schwarz inequality, this implies that each integral $\DC_\mu(\phi_tf_me_m,\phi_tf_ne_n)$ is finite, as is
		\[
		\int_\TB \frac{f_m(\zeta)\conj{f_n(\zeta)} }{|1-\zeta|^2}\diff\mu_{m,n}(\zeta).
		\]
		Summing over $m$ and $n$ yields
		\[
		\DC_{\mu}(\phi_tf)=\DC_{\mu}(f)+\frac{t}{\pi}\int_{\TB}\left\langle \frac{ \diff\mu(\zeta)}{|1-\zeta|^2}\, f(\zeta),f(\zeta)\right\rangle.
		\]
		To reach the desired conclusion, we add $\|\phi_t f\|_{H^2(\EC)}^2=\|f\|_{H^2(\EC)}^2$ on both sides.	
	\end{proof}
	
	Let $t>0$. By Lemma \ref{lemma:MultiplicationFormula}, $M_{\phi_t}:\PC_a(\EC)\to \DC_{\mu}^2(\EC)$ is $\DC_{\mu}^2(\EC)$-bounded. As such, it has a unique bounded extension $\tilde M:\DC_{\mu}^2(\EC)\to \DC_{\mu}^2(\EC)$. By \eqref{eq:ReproducingProperty}, if $f_n\to f$ in $H^2(\EC)$, then $f_n(z)\to f(z)$ for each $z\in\DB$. Since convergence in $\DC_{\mu}^2(\EC)$ implies convergence in $H^2(\EC)$, we obtain that $\tilde M$ is indeed given by multiplication by $\phi_t$. Hence, $(M_{\phi_t})_{t\ge 0}\subset \LC(\DC_{\mu}^2(\EC))$. It is clear that $(M_{\phi_t})_{t\ge 0}$ is a semigroup. 
	
	We need to prove that $(M_{\phi_t})_{t\ge 0}$ is strongly continuous. If $f\in\PC_a(\EC)$, then Lemma~\ref{lemma:MultiplicationFormula} implies that 
	\begin{align}\label{eq:MultiplicationFormula}
	\|\phi_tf\|_{\DC_{\mu}^2(\EC)}^2=\|f\|_{\DC_{\mu}^2(\EC)}^2+t\left\langle\beta_{1}(M_{\phi_1})f,f\right\rangle_{\DC_{\mu}^2(\EC)},
	\end{align}
	and by continuity, this identity extends to $f\in\DC_{\mu}^2(\EC)$. We conclude that, for fixed $f\in\DC_{\mu}^2(\EC)$, the family $(\phi_t f)_{0<t<1}$ is bounded in $\DC_{\mu}^2(\EC)$. As we let $t\to 0^+$, a subsequence of $(\phi_t f)_{0<t<1}$ will converge weakly to some $g\in\DC_{\mu}^2(\EC)$. If $z\in\DB$, and $x\in\EC$, then \eqref{eq:ReproducingProperty} implies that $f\mapsto \left\langle f(z),x\right\rangle$ is a bounded linear functional on $\DC_{\mu}^2(\EC)$, so $\left\langle \phi_t f(z),x\right\rangle \to \left\langle g(z),x\right\rangle$ for said subsequence. But the point-wise limit of $\phi_tf$ is $f$, so $g=f$. As this uniquely determines the limit of any subsequence, the entire family converges weakly to $f$. Since \eqref{eq:MultiplicationFormula} also implies that $\lim_{t\to 0^+}\|\phi_tf\|_{\DC_\mu^2(\EC)}^2=\|f\|_{\DC_\mu^2(\EC)}^2$, we conclude that $\lim_{t\to 0^+}\phi_tf=f$ with convergence in $\DC_{\mu}^2(\EC)$. This establishes that $(M_{\phi_t})_{t\ge0}$ is a $C_0$-semigroup, i.e. $(iii)\Rightarrow(i)$.

	It remains to prove that $(M_{\phi_t})_{t\ge0}$ is quasicontractive, with parameter $w_2$. It follows from \eqref{eq:StrongEmbeddingCondition2} and Lemma~\ref{lemma:MultiplicationFormula} that 
	\[
	\|M_{\phi_t}\|^2\le 1+2w_2t,\quad t\ge 0.
	\]
	The right-hand side is the tangent at $t=0$ of the convex function $t\mapsto \exp(2w_2t)$. Hence, $(M_{\phi_t})_{t\ge 0}$ is quasicontractive with parameter $w_2$.
	
\end{proof}

\section{Examples}\label{sec:Examples}

The following example establishes the existence of $2$-isometric semigroups:

\begin{ex}
	Let $h:\TB\to\LC_+$ be a function such that $\zeta\mapsto \frac{\|h(\zeta)\|}{|1-\zeta|^{2}}$ is bounded, and $\zeta\mapsto \left\langle h(\zeta)x,y\right\rangle$ is $\lambda$-measurable for each $x,y\in\EC$. Defining $\mu\ge 0$ by $\left\langle \mu(E)x,y\right\rangle =\int_E \left\langle h(\zeta)x,y\right\rangle \diff\lambda(\zeta)$, we have that
	\begin{multline*}
	\frac{1}{2\pi}\int_{\TB}\left\langle \frac{\diff\mu(\zeta)}{|1-\zeta|^2}\, f(\zeta),f(\zeta)\right\rangle
	=
	\frac{1}{2\pi}\int_{\TB} \frac{\|h(\zeta)f(\zeta)\|^2}{|1-\zeta|^2}\diff\lambda (\zeta)
	\\
	\lesssim
	\|f\|_{H^2(\EC)}^2\le \|f\|_{\DC_\mu^2(\EC)}^2,\quad f\in\PC_a(\EC).
	\end{multline*}
	Therefore, condition $(iii)$ of Theorem~\ref{thm:MainMeasures} is satisfied, and $(M_{\phi_t})_{t\ge 0}\subset \LC(\DC_\mu^2(\EC))$ is a $2$-isometric semigroup.
\end{ex}

Not every $2$-isometry is the cogenerator of a $C_0$-semigroup:

\begin{ex}
	Let $\EC=\CB$, and $\mu=\lambda$. Then condition $(iii)$ of Theorem~\ref{thm:MainMeasures} is not satisfied, so $M_z:\DC_\lambda^2\to\DC_\lambda^2$ is not the cogenerator of a $C_0$-semigroup. Identifying an analytic function with its sequence of Maclaurin coefficients, $\DC_\lambda^2$ is isometrically isomorphic to the space of sequences $(a_k)_{k\ge 0}$ such that 
	\[
	\sum_{k=0}^{\infty}(1+k)|a_k|^2<\infty,
	\] 
	e.g. \cite[Chapter~1]{ElFallah-Kellay-Mashreghi-Ransford2014:APrimerOnTheDirichletSpace}. Under this identification, $M_z:\DC_\lambda^2\to\DC_\lambda^2$ is unitarily equivalent to the right-shift
	\[
	(a_0,a_1,a_2,\ldots)\mapsto (0,a_0,a_1,\ldots ).
	\]
	Hence, the operator $M_z:\DC_\lambda^2\to\DC_\lambda^2$ is arguably the simplest example of a non-isometric $2$-isometry.
\end{ex}

The above example shows that $M_z:\DC_\lambda^2\to\DC_\lambda^2$ does not satisfy \eqref{eq:QCCondCoGen} for any $w\ge 0$. Hence, the corresponding condition in Theorem~\ref{thm:MainConverse} is not superfluous. Similarly, condition $(ii)$ of Theorem~\ref{thm:MainGen} is also not superfluous. However, in order to see this we need a slightly more refined example:

Recall that $M_z-I:\DC_\mu^2(\EC)\to\DC_\mu^2(\EC)$ is injective for any operator measure $\mu\ge 0$, see Section~\ref{SubSec:AnalyticOperators}. We may therefore define the operator
\[
A=(M_z+I)(M_z-I)^{-1},\quad D(A)=(M_z-I)\DC_{\mu}^2(\EC).
\]
Since $A-I=2(M_z-I)^{-1}$ has the bounded inverse $\frac{1}{2}(M_z-I)$, we conclude that $A$ is closed, and $1\in\rho (A)$. Moreover, $M_z$ is $2$-isometric, so Lemma~\ref{lemma:M-IsometryToM-SkewSymmetry} yields that $A$ is $2$-skew-symmetric. If $\lambda>0$, then
\[
(\lambda-A)^{-1}=\frac{1}{1+\lambda}(M_z-I)\left(\frac{\lambda-1}{\lambda +1}M_z-I\right)^{-1}.
\]
The last factor exists as a bounded operator, because $\frac{\lambda -1}{\lambda +1}<1$, and $\sigma(M_z)=\clos{\DB}$. The operator $A$ is densely defined if and only if $M_z-I$ has dense range. Moreover, $A$ is the generator of a $C_0$-semigroup if and only if $M_z$ is the cogenerator.

\begin{ex}
	Define $\mu$ by $\diff\mu(\zeta)=|1-\zeta|\diff\lambda(\zeta)$. This measure violates condition $(iii)$ of Theorem~\ref{thm:MainMeasures}. Hence, $M_z:\DC_{\mu}^2\to\DC_{\mu}^2$ is not the cogenerator of a $C_0$-semigroup.
	
	We now prove that $M_z-I$ has dense range. For $z\in\clos{\DB}$, and $r\in (0,1)$, let $k_r(z)=\frac{1-z}{1-rz}$. If $f\in\DC_{\mu}^2$, then $k_rf\in (M_z-I)\DC_{\mu}^2$. We prove that $k_rf\to f$ in $\DC_{\mu}^2$ as $r\to 1^-$. It is clear that $\|f-k_rf\|_{H^2}\to 0$. Hence, we need to show that $\DC_{\mu}(f-k_rf)\to 0$.
	
	Since $f\in\DC_{\mu}^2$, $\DC_{\zeta}(f)<\infty$ for $\mu$-a.e. $\zeta\in\TB$. For such $\zeta$, let $F(z)=\frac{f(z)-f(\zeta)}{z-\zeta}$. From Lemma~\ref{lemma:Convergence2},
	\begin{align*}
	\DC_{\zeta}(f-k_rf)
	&\le 
	2\left(\|F-k_rF\|_{H^2}^2+\frac{1-r}{1+r}\frac{|f(\zeta)|^2}{|1-r\zeta|^2}\right).
	\end{align*}
	Except possibly for $\zeta=1$, $\DC_{\zeta}(f-k_rf)\to 0$ as $r\to 1^-$. Moreover, $\|F-k_rF\|_{H^2}^2$ is bounded, while 
	\[
	\frac{1-r}{1+r}\frac{|f(\zeta)|^2}{|1-r\zeta|^2}
	\le
	4\frac{|f(\zeta)|^2}{|1-\zeta|}.
	\]
	This right-hand side is $\mu$-integrable, because $\DC_{\mu}^2\subset H^2$. By Fubini's theorem \eqref{eq:FubiniForPositiveMeasures}, dominated convergence implies that
	\[
	\DC_{\mu}(f-k_rf)
	=
	\frac{1}{\pi}\int_\TB \DC_\zeta (f-k_rf)\diff\mu(\zeta)\to 0.
	\]
	Therefore, $M_z-I$ has dense range.
	
	Based on our discussion, we conclude that the operator $A$, defined by
	\[
	A=(M_z+I)(M_z-I)^{-1},\quad D(A)=(M_z-I)\DC_{\mu}^2,
	\]
	is closed, densely defined, and $2$-skew-symmetric. Moreover, $\lambda -A:D(A)\to\HC$ is surjective for any $\lambda>0$. Nevertheless, $A$ fails to generate a $C_0$-semigroup, apparently for the sole reason that \eqref{eq:QCCondGen} is not satisfied for any $w\ge 0$.
\end{ex}

In the proof of Theorem~\ref{thm:MainMeasures}, recall our discussion of the case $\EC=\CB$ on p.~\pageref{p:ScalarCase}. The main part of the argument was to identify the measures $\mu$ for which the multiplication operators $(M_{\phi_t})_{t\ge 0}$ are bounded on $\DC_{\mu}^2$. With this in mind, we give an example of a semigroup that would appear to be $2$-isometric, apart from the fact that its elements are not bounded operators. This relates to some other recent examples of ``unbounded $2$-isometries'', \cite[Example~3.4]{Bermudez-Martinon-Muller2014:mq-IsometriesOnMetricSpaces}, and \cite[Example~7.1]{Rydhe2018:CyclicMIsometriesAndDirichletTypeSpaces}.

\begin{ex}
	Consider the right-shift semigroup $(S(t))_{t\ge 0}$, defined for functions $f:(0,\infty)\to\CB$ by
	\begin{align*}
	(S(t)f)(s)
	=
	\left\{
	\begin{array}{rr}
	f(s-t),& s\ge t,\\
	0, & t>s.
	\end{array}
	\right.
	\end{align*}
	Clearly, $S(0)f = f$, and $S(t_1)S(t_2)=S(t_1+t_2)$. Consider now the Hilbert space $L^2(0,\infty;s\diff s)$ of functions $f:(0,\infty)\to\CB$ such that
	\[
	\|f\|_1^2:=\int_{s=0}^\infty |f(s)|^2s\diff s<\infty.
	\]
	If $f$ is the indicator function of the interval $(0,h)$, then $\|f\|_1^2=\frac{h^2}{2}$, whereas $\|S(t)f\|_1^2=th+\frac{h^2}{2}$. Hence, any non-trivial right-shift $S(t)$ fails to be bounded on $L^2(0,\infty;s\diff s)$. On the other hand, if $f\in L^2(0,\infty;s\diff s)\cap L^2_{loc}(0,\infty)$, which is the natural domain of $S(t)$, then $\|S(t)^2f\|_1^2-2\|S(t)f\|_1^2+\|f\|_1^2=0$.
\end{ex}

\section{$m$-concave semigroups}\label{sec:m-ConcaveSemigroups}

Let $(T_t)_{t\ge 0}$ be a $C_0$-semigroup. We say that $(T_t)_{t\ge 0}$ is \textit{$m$-concave} if each $T_t$ is $m$-concave, i.e. $\beta_m(T_t)\le 0$. One can use \eqref{eq:HigherOrderDefectAsIntegral} to obtain:
\begin{prop}\label{prop:m-SubSkewSymmetricGenerator}
	Let $m\in\ZB_{\ge 1}$, and $(T_t)_{t\ge 0}$ be a $C_0$-semigroup with generator $A$. Then $(T_t)_{t\ge 0}$ is $m$-concave if and only if 
	\[
	\alpha_m^A(y)\le 0,\quad y\in D(A^m),
	\]
	where $\alpha_m^A$ is given by \eqref{eq:DefinitionOfAlphaM}.
\end{prop}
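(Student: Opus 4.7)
The proposition is a two-sided implication, and the natural strategy is to use \eqref{eq:HigherOrderDefectAsIntegral} in one direction and \eqref{eq:HigherOrderDefectAsIntegral'} in the other. Both identities were derived in the proof of Theorem~\ref{thm:PolynomialGrowthContinuousTime} via Lemma~\ref{lemma:DerivativeOfPolarizedOrbitMap} and Lemma~\ref{lemma:HigherOrderDifferenceQuotient}, and their derivation does not actually use the $m$-isometric assumption, so they are available here for any $C_0$-semigroup, provided $y\in D(A^m)$.

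For the implication $(\Leftarrow)$, assume $\alpha_m^A(y)\le 0$ for all $y\in D(A^m)$. Fix $t\ge 0$ and $y\in D(A^m)$. Since $D(A^m)$ is invariant under $(T_s)_{s\ge 0}$, each $T_{s_1+\ldots+s_m}y$ lies in $D(A^m)$, so the integrand in
\begin{align*}
\left\langle \beta_m(T_t)y,y\right\rangle
=
\int_0^t\!\!\cdots\int_0^t \alpha_m^A(T_{s_1+\ldots+s_m}y)\diff s_1\cdots\diff s_m
\end{align*}
is pointwise non-positive, and hence $\left\langle \beta_m(T_t)y,y\right\rangle\le 0$. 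Since $D(A^m)$ is dense in $\HC$ (a standard fact for $C_0$-semigroup generators) and $\beta_m(T_t)\in\LC$ is self-adjoint, this quadratic-form inequality extends from $D(A^m)$ to all of $\HC$, yielding $\beta_m(T_t)\le 0$, i.e.\ $T_t$ is $m$-concave.

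For the implication $(\Rightarrow)$, assume each $T_t$ is $m$-concave, and fix $y\in D(A^m)$. Then $\left\langle \beta_m(T_t)y,y\right\rangle\le 0$ for every $t>0$, so
\begin{align*}
\frac{1}{t^m}\left\langle \beta_m(T_t)y,y\right\rangle\le 0,\quad t>0.
\end{align*}
Passing to the limit as $t\to 0^+$ and invoking \eqref{eq:HigherOrderDefectAsIntegral'} with $j=m$ gives $\alpha_m^A(y)\le 0$, as required.

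There is no real obstacle; the only point that deserves a line of justification is the density of $D(A^m)$ and the fact that $D(A^m)$ is semigroup-invariant, both of which are standard $C_0$-semigroup facts used in the same way earlier in Section~\ref{sec:m-IsometricSemigroups}. Everything else is a direct application of the already-established integral representation \eqref{eq:HigherOrderDefectAsIntegral} and its infinitesimal consequence \eqref{eq:HigherOrderDefectAsIntegral'}.
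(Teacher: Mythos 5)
Your proof is correct and follows exactly the route the paper indicates (the paper merely remarks that Proposition~\ref{prop:m-SubSkewSymmetricGenerator} follows from \eqref{eq:HigherOrderDefectAsIntegral}, leaving the details to the reader). You have filled in precisely the intended details: non-positivity of the integrand plus density of $D(A^m)$ for one direction, and the limit relation \eqref{eq:HigherOrderDefectAsIntegral'} for the other, both of which are valid for arbitrary $C_0$-semigroups as you note.
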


For $m$-concave operators, the following analogue of \eqref{eq:PolynomialGrowthDiscreteTime} holds:

\begin{prop}\label{prop:PolynomialDominationDiscreteTime}
	Let $m\in\ZB_{\ge 1}$, and assume that $T\in\LC$ is $m$-concave. Then
	\begin{align}\label{eq:PolynomialDominationDiscreteTime}
	\|T^kx\|^2\le \sum_{j=0}^{m-1}\binom{k}{j}\left\langle \beta_{j}(T)x,x\right\rangle ,\quad x\in\HC.
	\end{align}
\end{prop}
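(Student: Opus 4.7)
The plan is to recast \eqref{eq:PolynomialDominationDiscreteTime} as a discrete Taylor-type inequality for the scalar sequence $a_k := \|T^k x\|^2$ and deduce it from the hypothesis $\beta_m(T)\le 0$ via finite differences. Fix $x\in \HC$, set $c_j := \langle \beta_j(T)x,x\rangle$, and let $\Delta$ denote the forward difference operator in $k$. By \eqref{eq:SumOfDefectOperators} one has $a_k = \sum_{j=0}^{k}\binom{k}{j} c_j$, and in particular $a_k = \sum_{j=0}^{m-1}\binom{k}{j}c_j$ whenever $0\le k\le m-1$. Hence the remainder
\[
R_k := a_k - \sum_{j=0}^{m-1}\binom{k}{j}c_j
\]
vanishes for $k=0,\dots,m-1$, and the desired inequality is precisely $R_k\le 0$ for all $k\ge 0$.

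The second step is to compute $\Delta^m R_k$. A direct calculation using the definition of $\beta_m(T)$ gives
\[
(\Delta^m a)_k = \sum_{j=0}^m (-1)^{m-j}\binom{m}{j}\|T^{k+j}x\|^2 = \langle \beta_m(T)\, T^k x,\, T^k x\rangle,
\]
which is $\le 0$ by $m$-concavity, since $T^{*k}\beta_m(T)T^k\le 0$. As each function $k\mapsto\binom{k}{j}$ is a polynomial of degree $j<m$, its $m$-th difference vanishes, and therefore $\Delta^m R_k = \Delta^m a_k \le 0$ for every $k\ge 0$.

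It then remains to deduce $R_k\le 0$ from the two facts $R_0=\dots=R_{m-1}=0$ and $\Delta^m R_k\le 0$ for all $k\ge 0$, which I would prove by induction on $m$. The case $m=1$ is immediate, and for the inductive step one observes that $S_k := (\Delta R)_k$ satisfies $S_0=\dots=S_{m-2}=0$ and $\Delta^{m-1}S = \Delta^m R \le 0$, so by the inductive hypothesis $S_k\le 0$, i.e.\ $R$ is non-increasing; since $R_{m-1}=0$ this forces $R_k\le 0$ for all $k\ge 0$. The whole argument is purely combinatorial once the identity $\Delta^m\|T^kx\|^2=\langle \beta_m(T)T^kx,T^kx\rangle$ is in hand, and no substantive obstacle is anticipated.
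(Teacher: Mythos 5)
Your argument is correct, and it takes a genuinely different route from the paper. The paper works at the operator level: it splits $T^{*k}T^k=\sum_{j=0}^{m-1}\binom{k}{j}\beta_j(T)+\sum_{j=m}^{k}\binom{k}{j}\beta_j(T)$ using \eqref{eq:SumOfDefectOperators}, rewrites each $\beta_{j+m}(T)$ in terms of $T^{*i}\beta_m(T)T^i$ via a formula quoted from an earlier paper, and then must verify that the resulting coefficients $\sum_{j=i}^{N}(-1)^{j-i}\binom{N+m}{j+m}\binom{j}{i}$ are non-negative --- this is the content of a separate combinatorial lemma (Lemma~\ref{lemma:BinomialSum}), proved by a two-variable induction. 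You instead work with the scalar sequence $a_k=\|T^kx\|^2$ and finite differences: the identity $(\Delta^m a)_k=\langle\beta_m(T)T^kx,T^kx\rangle\le 0$ is immediate from the definition of $\beta_m$, the binomial terms of degree $<m$ are annihilated by $\Delta^m$ (via Pascal's rule $\Delta\binom{k}{j}=\binom{k}{j-1}$ and the paper's convention that $\binom{k}{j}=0$ for $j<0$), and the remainder $R_k$ is then handled by a short induction on $m$; note that the discrete induction works precisely because $R_0=\dots=R_{m-1}=0$ at \emph{consecutive} integers plays the role of vanishing derivatives at the origin, so $\Delta R\le 0$ everywhere and monotonicity closes the argument. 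Your approach is more elementary and self-contained (it makes the external formula for $\beta_{j+m}$ and the binomial identity of Lemma~\ref{lemma:BinomialSum} unnecessary, and is the exact discrete analogue of how the paper itself proves the continuous-time version, Proposition~\ref{prop:PolynomialDominationContinuousTime}, via Lemma~\ref{lemma:HigherOrderDifferenceQuotient}); what the paper's route buys is the explicit non-negative closed form $\binom{m-1+N-i}{N-i}$ for the coefficients, i.e.\ slightly more information than the inequality alone requires.
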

\begin{proof}
	For $k<m$ we have equality, by \eqref{eq:SumOfDefectOperators}. We therefore consider $k\ge m$:
	\[
	T^{*k}T^k=\sum_{j=0}^{m-1}\binom{k}{j}\beta_{j}(T)+\sum_{j=m}^{k}\binom{k}{j}\beta_{j}(T).
	\]
	It suffices to show that the second sum is $\le 0$.	From \cite[Proposition~2.1]{Rydhe2018:CyclicMIsometriesAndDirichletTypeSpaces}, we take the formula
	\[
	\beta_{j+m}(T)=\sum_{i=0}^j(-1)^{j-i}\binom{j}{i}T^{*i}\beta_m(T)T^i.
	\]
	Shifting the index, and changing the order of summation, the previous formula yields
	\begin{align*}
	\sum_{j=m}^{k}\binom{k}{j}\beta_{j}(T)
	&=
	\sum_{j=0}^{k-m}\binom{k}{j+m}\beta_{j+m}(T)
	\\
	&=
	\sum_{j=0}^{k-m}\binom{k}{j+m}\sum_{i=0}^j(-1)^{j-i}\binom{j}{i}T^{*i}\beta_m(T)T^i
	\\
	&=
	\sum_{i=0}^{k-m}\left(\sum_{j=i}^{k-m}(-1)^{j-i}\binom{k}{j+m}\binom{j}{i}\right)T^{*i}\beta_m(T)T^i.
	\end{align*}
	Since $\beta_m(T)\le 0$ by assumption, we are done if $\sum_{j=i}^{N}(-1)^{j-i}\binom{N+m}{j+m}\binom{j}{i}\ge 0$ for all integers $i$ and $N$ with $N\ge i\ge 0$. This follows by Lemma~\ref{lemma:BinomialSum} below.
\end{proof}
\begin{lem}\label{lemma:BinomialSum}
	Given integers $N\ge i\ge 0$, $m\ge 1$, it holds that
	\[
	\sum_{j=i}^{N}(-1)^{j-i}\binom{N+m}{j+m}\binom{j}{i}=\binom{m-1+N-i}{N-i}.
	\]
\end{lem}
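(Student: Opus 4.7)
The plan is to reduce this to a routine coefficient-extraction identity that can be handled by the standard negative binomial generating function.

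First, I would reindex by setting $k=j-i$, $M=N-i$, and $r=i$. The identity then becomes
\[
\sum_{k=0}^{M}(-1)^{k}\binom{M+r+m}{k+r+m}\binom{k+r}{k}=\binom{m-1+M}{M}.
\]
Next I would apply the symmetry $\binom{M+r+m}{k+r+m}=\binom{M+r+m}{M-k}$ to recognize the left-hand side as $[x^M]$ of the product
\[
(1+x)^{M+r+m}\cdot \sum_{k\ge 0}(-1)^{k}\binom{k+r}{k}x^{k}.
\]

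The key ingredient is the negative binomial series identity $\sum_{k\ge 0}\binom{k+r}{k}y^{k}=(1-y)^{-(r+1)}$, evaluated at $y=-x$, which gives $\sum_{k\ge 0}(-1)^{k}\binom{k+r}{k}x^{k}=(1+x)^{-(r+1)}$. Substituting this above, the product collapses to $(1+x)^{M+m-1}$, and extracting the coefficient of $x^{M}$ yields $\binom{M+m-1}{M}$, which is the desired right-hand side.

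There is no real obstacle here; the identity is a textbook application of the snake-oil method. An alternative route would be to induct on $m$ using Pascal's rule $\binom{N+m}{j+m}=\binom{N+m-1}{j+m-1}+\binom{N+m-1}{j+m}$, which would give a telescoping recursion matching $\binom{m-1+N-i}{N-i}=\binom{m-2+N-i}{N-i}+\binom{m-1+N-i-1}{N-i-1}$; but the generating function computation is shorter and avoids bookkeeping on the summation limits.
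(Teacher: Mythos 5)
Your proof is correct, but it takes a genuinely different route from the paper. The paper proves the identity by a double induction: it first verifies the case $i=0$ directly via a telescoping application of Pascal's rule, then establishes the recursion $\LHS_{i,N}^{(m)}=\LHS_{i-1,N-1}^{(m+1)}-\LHS_{i,N-1}^{(m+1)}$ and inducts over $i$ and $N$ (with $m$ floating), matching against the corresponding recursion for the right-hand side. Your coefficient-extraction argument replaces all of this bookkeeping with the single observation that, after the substitution $k=j-i$, $M=N-i$, $r=i$ and the symmetry $\binom{M+r+m}{k+r+m}=\binom{M+r+m}{M-k}$, the sum is the coefficient of $x^M$ in $(1+x)^{M+r+m}(1+x)^{-(r+1)}=(1+x)^{M+m-1}$; the extension of the sum to all $k\ge 0$ is harmless since $\binom{M+r+m}{M-k}$ vanishes for $k>M$, consistent with the paper's binomial conventions. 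Your approach is shorter and makes the cancellation of the parameter $r=i$ transparent, whereas the paper's induction is self-contained at the level of \eqref{eq:BinomialIdentity} and avoids invoking the negative binomial series; either is a complete proof.
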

\begin{proof}
	Define 
	\begin{align*}
	\LHS_{i,N}^{(m)}=\sum_{j=i}^{N}(-1)^{j-i}\binom{N+m}{j+m}\binom{j}{i},
	\end{align*}
	and
	\begin{align*}
	\RHS_{i,N}^{(m)}=\binom{m-1+N-i}{N-i}.
	\end{align*}	
	Step 1, ($i=0$)\,: Using \eqref{eq:BinomialIdentity},
	\begin{align*}
	\LHS_{0,N}^{(m)}
	&=
	\sum_{j=0}^{N}(-1)^{j}\left[\binom{N+m-1}{j+m-1}+\binom{N+m-1}{j+m}\right]
	\\
	&=
	\sum_{j=-1}^{N-1}(-1)^{j+1}\binom{N+m-1}{j+m}+\sum_{j=0}^{N}(-1)^{j}\binom{N+m-1}{j+m}
	\\
	&=
	\binom{N+m-1}{m-1}+(-1)^N\binom{N+m-1}{N+m}
	\\
	&=
	\binom{N+m-1}{N}=\RHS_{0,N}^{(m)}.
	\end{align*}	
	Step 2, (recursion formula)\,: Using \eqref{eq:BinomialIdentity} again,
	\begin{align*}
	\LHS_{i,N}^{(m)}
	={}&
	\sum_{j=i}^{N}(-1)^{j-i}\binom{N+m}{j+m}\left[\binom{j-1}{i-1}+\binom{j-1}{i}\right]
	\\
	={}&
	\sum_{j=i-1}^{N-1}(-1)^{j+1-i}\binom{N+m}{j+m+1}\binom{j}{i-1}
	\\
	&+\sum_{j=i}^{N-1}(-1)^{j+1-i}\binom{m+N}{j+m+1}\binom{j}{i}
	\\
	={}&
	\LHS_{i-1,N-1}^{(m+1)}-\LHS_{i,N-1}^{(m+1)}.
	\end{align*}	
	Step 3, (induction)\,: By step 1, it holds that $\LHS_{0,N}^{(m)}=\RHS_{0,N}^{(m)}$ whenever $N\ge 0$, and $m\ge 1$. Moreover, $\LHS_{N,N}^{(m)}=\RHS_{N,N}^{(m)}$ for any $N\ge 0$, $m\ge 1$. We may thus assume that there exists $i\ge 0$ such that
	\[
	m\ge 1,N\ge i\quad \Rightarrow \quad \LHS_{i,N}^{(m)}=\RHS_{i,N}^{(m)},
	\]
	and $N\ge i+1$ such that
	\[
	m\ge 1\quad \Rightarrow \quad \LHS_{i+1,N}^{(m)}=\RHS_{i+1,N}^{(m)}.
	\]
	By step 2, this implies that
	\begin{align*}
	\LHS_{i+1,N+1}^{(m)}
	&=
	\LHS_{i,N}^{(m+1)}-\LHS_{i+1,N}^{(m+1)}
	\\
	&=
	\RHS_{i,N}^{(m+1)}-\RHS_{i+1,N}^{(m+1)} 
	=
	\RHS_{i+1,N+1}^{(m)}.
	\end{align*}
	By induction over $N\ge i+1$,
	\[
	m\ge 1,N\ge i+1\quad \Rightarrow \quad \LHS_{i+1,N}^{(m)}=\RHS_{i+1,N}^{(m)}.
	\]
	By induction over $i\ge 0$, 
	\[
	m\ge 1,N\ge i\ge 0\quad \Rightarrow \quad \LHS_{i,N}^{(m)}=\RHS_{i,N}^{(m)}.
	\]
\end{proof}

Proposition~\ref{prop:PolynomialDominationDiscreteTime} implies an analogue of Theorem~\ref{thm:PolynomialGrowthContinuousTime}:

\begin{prop}\label{prop:PolynomialDominationContinuousTime}
	Let $(T_t)_{t\ge 0}$ be an $m$-concave semigroup with generator $A$. If $y\in D(A^{m-1})$, then
	\begin{align*}
	\|T_ty\|^2\le\sum_{j=0}^{m-1}\frac{t^j}{j!}\alpha_j^A(y),\quad t\ge 0,
	\end{align*}
	where $(\alpha_{j}^A)_{j=0}^{m-1}$ is given by \eqref{eq:DefinitionOfAlphaM}.
\end{prop}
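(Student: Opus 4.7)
The plan is to mimic the proof of Theorem~\ref{thm:PolynomialGrowthContinuousTime}, but replacing the exact polynomial identity in discrete time with the inequality of Proposition~\ref{prop:PolynomialDominationDiscreteTime}.

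Fix $t\ge 0$, $y\in D(A^{m-1})$, and a positive integer $k$. Since every $T_s$ is $m$-concave, Proposition~\ref{prop:PolynomialDominationDiscreteTime} applied to $T:=T_{t/k}$ yields
\begin{align*}
\|T_t y\|^2 = \|T_{t/k}^{k} y\|^2 \le \sum_{j=0}^{m-1}\binom{k}{j}\left\langle \beta_{j}(T_{t/k})y,y\right\rangle = \sum_{j=0}^{m-1}\binom{k}{j}\left(\frac{t}{k}\right)^{j}\frac{\left\langle \beta_{j}(T_{t/k})y,y\right\rangle}{(t/k)^{j}}.
\end{align*}

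Now I would let $k\to\infty$ and analyse each term in this finite sum separately. The combinatorial prefactor satisfies $\binom{k}{j}(t/k)^{j}\to t^{j}/j!$. For the remaining quotient, observe that $y\in D(A^{m-1})\subseteq D(A^{j})$ for every $j\le m-1$, so that $\alpha_j^A(y)$ is well defined. Since \eqref{eq:HigherOrderDefectAsIntegral} and Lemma~\ref{lemma:DerivativeOfPolarizedOrbitMap} hold for an arbitrary $C_0$-semigroup (they do not require $m$-isometry), the identity \eqref{eq:HigherOrderDefectAsIntegral'} remains valid here, giving
\begin{align*}
\lim_{k\to\infty}\frac{\left\langle \beta_{j}(T_{t/k})y,y\right\rangle}{(t/k)^{j}}=\alpha_{j}^A(y),\quad 0\le j\le m-1.
\end{align*}

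Combining these two limits and noting that the sum has a fixed finite number of terms (so termwise convergence suffices), I obtain
\begin{align*}
\|T_t y\|^2 \le \sum_{j=0}^{m-1}\frac{t^{j}}{j!}\alpha_{j}^A(y),
\end{align*}
which is the claim. The only genuine input beyond Theorem~\ref{thm:PolynomialGrowthContinuousTime}'s argument is the discrete-time inequality of Proposition~\ref{prop:PolynomialDominationDiscreteTime}, so there is no real obstacle; unlike in the $m$-isometric case, we cannot extend the estimate beyond $D(A^{m-1})$ because the forms $\alpha_j^A$ are not asserted to be bounded in the $m$-concave setting.
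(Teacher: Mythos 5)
Your proof is correct and follows exactly the route the paper intends: the paper gives no explicit argument but states that Proposition~\ref{prop:PolynomialDominationDiscreteTime} yields the result as an analogue of Theorem~\ref{thm:PolynomialGrowthContinuousTime}, which is precisely your combination of the discrete-time inequality applied to $T_{t/k}$ with the limit \eqref{eq:HigherOrderDefectAsIntegral'}. Your closing observation about why the estimate stays on $D(A^{m-1})$ also matches the paper's subsequent remark on the unboundedness of the forms $\alpha_j^A$.
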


A significant difference from the $m$-isometric case is that we have no reason to expect the forms $(\alpha_j^A)_{j=0}^{m-1}$ to be bounded. Therefore, we obtain no evidence that $m$-concave semigroups are quasicontractive by necessity. On the other hand, from Proposition~\ref{prop:PolynomialDominationDiscreteTime}, we have that
\[
T_k^*T_k\le \sum_{j=0}^{m-1}\binom{k}{j}\beta_{j}(T_1).
\]
Each form $x\mapsto \left\langle \beta_j(T_1)x,x\right\rangle$ is bounded on $\HC$. Together with the semigroup property, this implies that $\|T_t\|^2\lesssim (1+t)^{m-1}$. From \eqref{eq:LaplaceTransformOfSemigroup}, we therefore obtain:
\begin{prop}
	Let $m\in\ZB_{\ge 0}$, and $(T_t)_{t\ge 0}$ be an $m$-concave semigroup with generator $A$. Then
	\[
	\sigma(A)\subseteq \left\{ z\in\CB;\,\Re z\le 0 \right\}.
	\]
	In particular, $(T_t)_{t\ge 0}$ has a well-defined cogenerator
	\[
	T=(A+I)(A-I)^{-1}\in\LC.
	\]
\end{prop}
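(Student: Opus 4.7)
The plan is to upgrade the discrete growth estimate of Proposition~\ref{prop:PolynomialDominationDiscreteTime} to a continuous-time bound on $\|T_t\|$, and then read off the spectral inclusion from the resolvent integral \eqref{eq:LaplaceTransformOfSemigroup}; the cogenerator claim then follows immediately from Lemma~\ref{lemma:M-IsometryToM-SkewSymmetry}.

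The first step is to apply Proposition~\ref{prop:PolynomialDominationDiscreteTime} to the single $m$-concave operator $T_1 \in \LC$. Each form $x \mapsto \left\langle \beta_j(T_1)x, x\right\rangle$ is bounded on $\HC$, so for $k \in \ZB_{\ge 0}$,
\[
\|T_1^k x\|^2 \le \sum_{j=0}^{m-1} \binom{k}{j} \left\langle \beta_j(T_1)x, x\right\rangle \lesssim (1+k)^{m-1}\|x\|^2,
\]
with an implicit constant independent of $k$ and $x$. Writing $t = k + s$ with $k \in \ZB_{\ge 0}$, $s \in [0,1)$, the semigroup identity $T_t = T_s T_1^k$ together with the local bound $M := \sup_{0 \le s \le 1}\|T_s\|_\LC < \infty$ (noted in Subsection~\ref{subsec:C_0-Semigroups}) yields
\[
\|T_t\|_\LC \lesssim (1+t)^{(m-1)/2}, \qquad t \ge 0.
\]

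The second step is spectral. The polynomial bound shows that for any $\lambda \in \CB$ with $\Re\lambda > 0$ and any $x \in \HC$, the integral $\int_0^\infty e^{-\lambda t} T_t x \diff t$ converges absolutely in $\HC$. The argument supporting \eqref{eq:LaplaceTransformOfSemigroup}, applied now on the entire right half-plane rather than only the positive real axis, identifies this integral with a bounded operator $R(\lambda)$ satisfying $(\lambda - A) R(\lambda) = I$ on $\HC$ and $R(\lambda)(\lambda - A) = I$ on $D(A)$. Hence $\left\{ z\in\CB;\,\Re z>0 \right\} \subseteq \rho(A)$, which gives $\sigma(A)\subseteq \left\{ z\in\CB;\,\Re z\le 0 \right\}$. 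In particular $1 \in \rho(A)$, so Lemma~\ref{lemma:M-IsometryToM-SkewSymmetry} produces a bounded cogenerator $T = (A+I)(A-I)^{-1} \in \LC$.

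The only mild obstacle is the passage from the real-$\lambda$ version of \eqref{eq:LaplaceTransformOfSemigroup} to the complex-$\lambda$ version used above; this is a routine Bochner-integration and analytic-continuation argument available in any standard treatment of $C_0$-semigroups, and the real content of the proposition is the discrete-to-continuous polynomial growth bound established in the first step.
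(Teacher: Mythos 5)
Your proposal is correct and follows essentially the same route as the paper: apply Proposition~\ref{prop:PolynomialDominationDiscreteTime} to the single operator $T_1$ to get $\|T_1^k\|^2\lesssim(1+k)^{m-1}$, upgrade to $\|T_t\|^2\lesssim(1+t)^{m-1}$ via the semigroup property and local boundedness, and then conclude via the resolvent integral \eqref{eq:LaplaceTransformOfSemigroup} and Lemma~\ref{lemma:M-IsometryToM-SkewSymmetry}. Your explicit remark about extending \eqref{eq:LaplaceTransformOfSemigroup} to complex $\lambda$ with $\Re\lambda>0$ is a point the paper leaves implicit, but it is routine and does not change the argument.
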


Combining the above result with Proposition~\ref{prop:m-SubSkewSymmetricGenerator}, and Lemma~\ref{lemma:M-IsometryToM-SkewSymmetry}, one obtains:

\begin{prop}
	A $C_0$-semigroup $(T_t)_{t\ge0}$ is $m$-concave if and only if it possess an $m$-concave cogenerator.
\end{prop}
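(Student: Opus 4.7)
The plan is to combine the two tools already at hand: Proposition~\ref{prop:m-SubSkewSymmetricGenerator}, which rewrites $m$-concavity of the semigroup as a sign condition on $\alpha_m^A$, and Lemma~\ref{lemma:M-IsometryToM-SkewSymmetry}, which relates that sign condition to the sign of $\beta_m(T)$. The preceding proposition guarantees that in either direction the cogenerator $T = (A+I)(A-I)^{-1}$ is well-defined (if $(T_t)_{t\ge 0}$ is $m$-concave, its spectrum lies in the closed left half plane, so $1\in\rho(A)$; conversely, assuming $T$ exists presupposes the same).

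First I would record the key algebraic identity from \eqref{eq:BetaAndAlphaFormula}, applied with $x_1=x_2=x$:
\begin{align*}
\left\langle \beta_m(T)x,x\right\rangle = 2^m\,\alpha_m^A\bigl((A-I)^{-m}x\bigr),\quad x\in\HC.
\end{align*}
Next I would note that $(A-I)^{-1}\colon\HC\to D(A)$ is a bounded bijection, and since it maps $D(A)$ into $D(A^2)$ (because $(A-I)$ commutes with itself), iteration gives that $(A-I)^{-m}$ is a bijection from $\HC$ onto $D(A^m)$. Consequently, as $x$ ranges over $\HC$, the argument $(A-I)^{-m}x$ ranges over all of $D(A^m)$.

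With this in hand the equivalence is essentially immediate. If $(T_t)_{t\ge 0}$ is $m$-concave, then by Proposition~\ref{prop:m-SubSkewSymmetricGenerator} we have $\alpha_m^A(y)\le 0$ for every $y\in D(A^m)$, so the displayed identity forces $\langle\beta_m(T)x,x\rangle\le 0$ for every $x\in\HC$, i.e. $T$ is $m$-concave. Conversely, if $T$ is $m$-concave, the identity yields $\alpha_m^A(y)\le 0$ for every $y$ of the form $(A-I)^{-m}x$, and by the surjectivity just noted this is all of $D(A^m)$; applying Proposition~\ref{prop:m-SubSkewSymmetricGenerator} in reverse gives $m$-concavity of the semigroup.

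There is no real obstacle: the only point requiring care is the verification that $(A-I)^{-m}$ surjects onto $D(A^m)$, which is a routine consequence of $1\in\rho(A)$ and the fact that $(A-I)^{-1}$ preserves the domain tower. Everything else is bookkeeping through the two lemmas already proved.
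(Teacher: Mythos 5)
Your argument is correct and follows the paper's own route exactly: the paper likewise obtains this proposition by combining the preceding spectral result (guaranteeing the cogenerator exists), Proposition~\ref{prop:m-SubSkewSymmetricGenerator}, and the identity \eqref{eq:BetaAndAlphaFormula} from Lemma~\ref{lemma:M-IsometryToM-SkewSymmetry}. Your explicit verification that $(A-I)^{-m}$ maps $\HC$ bijectively onto $D(A^m)$ is the one detail the paper leaves implicit, and you handle it correctly.
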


\section*{Acknowledgements}
During the preparation of this work, I have enjoyed interesting conversations with Jonathan Partington, Bartosz Malman, and Mikael Persson Sundqvist. I am also indebted to the anonymous referee for carefully reading and commenting on several versions of this manuscript. In particular, the suggestion to include a more thorough discussion on generators led to substantial improvements not only of the presentation, but also of the results.


\providecommand{\bysame}{\leavevmode\hbox to3em{\hrulefill}\thinspace}
\providecommand{\MR}{\relax\ifhmode\unskip\space\fi MR }
\providecommand{\MRhref}[2]{%
	\href{http://www.ams.org/mathscinet-getitem?mr=#1}{#2}
}
\providecommand{\href}[2]{#2}

\end{document}